\documentclass{ecgd-l}

\setlength{\textheight}{43pc} \setlength{\textwidth}{28pc}

\usepackage{amssymb,amsmath}
\usepackage{url}
\usepackage{amscd}
\usepackage{texdraw}
\usepackage[all]{xy}
\usepackage{fancyhdr}
\usepackage{amsmath}
\usepackage{graphicx}
\usepackage{subcaption}
\DeclareMathOperator{\sech}{sech}
\usepackage{amscd,color}
\usepackage[shortlabels]{enumitem}

%%%%%%%%%%%%%%%%%%%%%%%%%%%%%%%%%%%%%%%%%%%%%%%%%%%%%%%%%%%%%%%%%%%%%%%%%
%%% Theorem declarations
%%%%%%%%%%%%%%%%%%%%%%%%%%%%%%%%%%%%%%%%%%%%%%%%%%%%%%%%%%%%%%%%%%%%%%%%%

\theoremstyle{plain}
\newtheorem{theorem}{Theorem}

\newtheorem{corollary}{Corollary}[section]
\newtheorem{lemma}{Lemma}[section]
\newtheorem{remark}{Remark}[section]
\newtheorem{proposition}{Proposition}[section]
\newtheorem{definition}{Definition}[section]
\newtheorem*{lemma1}{Lemma}
\input txdtools

\newcommand{\CC}{{\mathbb C}}

\newcommand{\RR}{{\mathbb R}}

\newcommand{\ZZ}{{\mathbb Z}}

\newcommand{\Rt}{{\mathcal R}_t}
\newcommand{\R}{{\mathcal R}}

\begin{document}
\title[Cycle Doubling and Merging]
{Cycle Doubling, Merging And Renormalization in the Tangent Family}

\author{Tao Chen, Yunping Jiang, and Linda Keen}

\address{}
\email{}

\thanks{}

\subjclass[2010]{Primary: 37F30, 37F20, 37F10; Secondary: 30F30, 30D30, 32A20}

\begin{abstract} 
In this paper we study the transition to chaos  for  the restriction  to the real and imaginary axes of the tangent family $\{ T_t(z)=i t\tan z\}_{0< t\leq \pi}$.  Because tangent maps have no critical points but have an essential singularity at infinity and two symmetric asymptotic values,  there are new  phenomena:  as $t$ increases we find single instances of  ``period quadrupling'',   ``period splitting'' and standard ``period doubling'';   there follows  a general pattern of ``period merging'' where two attracting cycles of period $2^n$ ``merge'' into one attracting cycle of period $2^{n+1}$, and ``cycle doubling'' where an attracting cycle of period $2^{n+1}$ ``becomes'' two attracting cycles of the same period. 

We  use renormalization to prove the existence  of these bifurcation parameters.  
The uniqueness of the cycle doubling  and cycle merging parameters is quite subtle and requires a new approach.  
To prove the cycle doubling and merging parameters are, indeed, unique,  we apply the concept of ``holomorphic motions'' to our context.   

In addition,  we prove that there is an ``infinitely renormalizable'' tangent map  $T_{t_\infty}$.   It has no attracting or parabolic cycles.  Instead, it has a strange attractor contained in the real and imaginary axes which is forward invariant and minimal under  $T^2_{t_\infty}$. 
The intersection of this strange attractor with the real line consists of two binary Cantor sets and 
the intersection  with the imaginary line is  totally disconnected, perfect  and unbounded.
\end{abstract}

\maketitle

\section{Introduction}\label{intro}

In the 1970s, Feigenbaum~\cite{Fe1,Fe2}, and independently, Coullet and Tresser~\cite{ACT,CT}, discovered an interesting phenomenon in physics  called  {\em period doubling} that showed how a sequence of dynamical systems with stable dynamics can converge to one with chaotic dynamics (see e.g.~\cite{JBook}).  They began with the quadratic family $Q_{t}(x)=-(1+t)x^{2} +t$ parameterized by $t \in [0,1]$.  For all $t$, $Q_t$ maps the interval $[-1,1]$ into itself, fixing $-1$.   While for very small values of $t$, every point  inside the interval $(-1,1)$  is attracted by a fixed point inside the interval, eventually one encounters a strictly increasing sequence $\{t_{n} \}_{n=1}^{\infty}$ such that for $t \in (t_{n-1},t_{n})$, $Q_{t}$ has a  repelling cycle $C_{k,t}$ of period $2^{k}$ for $k=1, \ldots, n-1$ and an attracting cycle $C_{n,t}$ of period $2^n$. That is, as $t$ passes through each $t_n$, the period of the attracting cycle is doubled. 

Since the $t_n$'s form a bounded increasing sequence, they have a limit $t_{\infty}$.   The limit map $Q_{t_{\infty}}$  is a quadratic polynomial with repelling cycles $C_{k,t_{\infty}}$ of period $2^k$ for all positive integers $k$.   It has no attracting or parabolic cycle, but the orbit of the critical point $0$,   attracts all points that do not land on one of  the repelling cycles.   This attractor is homeomorphic to the standard $1/3$ Cantor set and $Q_{t_{\infty}}$ acts on it as an adding machine; thus, it is minimal in the sense that the orbit of every point is dense.   

Following that early work,  period doubling was found to occur in many branches of mathematics, physics, chemistry, and biology, etc..
 In this paper, we exhibit an analogous phenomenon, 
which we  call {\em  cycle doubling}, and a new phenomenon, which we  call {\em cycle merging}, that occurs for the tangent family
$$
\{ T_{t} (z)=it \tan z\}_{0< t\leq \pi}.
$$
Before we get to what that means, we need to fill in some background about the
tangent family.

Each $ T_{t}$ is a meromorphic map of the complex plane with poles at  $\{k\pi+\pi/2\}_{k\in \mathbb{Z}}$. 
Unlike the quadratic maps, it has no critical points. It does have an essential singularity at infinity with a symmetric pair of asymptotic values, $\{-t, t\}$ that are limits of $T_t$ along paths tending to infinity in the directions of the positive and negative imaginary axes respectively.  In this sense, the asymptotic values can be thought of as ``virtual images" of infinity.   As a general principle, the dynamics of a system generated by iterating a map are controlled by the orbits of the points where the map is not a regular covering: for quadratic maps, this set is the orbit of the critical value, and for the tangent map, it is the orbits of the two symmetric asymptotic values. 

For real values of $t$,  $T_{t}$ maps the real line $\Re$ to the imaginary line $\Im$ and maps $\Im$ to $\Re$ so that $f_t=T_t^2$ is a self map of both $\Re$ and $\Im$.   In the same sense that the asymptotic values are virtual images of infinity under $T_t$, they are virtual images of the poles of $T_t$ under $f_t$.  
We study the dynamics of $T_t$ and $f_t$ restricted to these axes and  show that  as $t$ moves from left to right in $(0, \pi)$, we see first a ``period quadrupling'', next a ``period splitting'' and then a ``period doubling'' like that for quadratic maps.  Unlike quadratic maps, however, this period doubling occurs only for a single value of $t$.  Afterwards,  as $t$ increases,  a  general pattern occurs either of ``cycle merging'', in which two attracting cycles of period $n$ merge to form a cycle of period $2n$, or ``cycle doubling'', where instead of seeing one new cycle of period $2n$ form from two cycles of period $n$, we see two new attracting cycles  of period $n$ form from the single cycle of period $n$. The result is a pair of interleaved strictly increasing sequences where these phenomena happen, $\{\alpha_n=t_n^{double}\},\{\beta_n= t_n^{merge}\}$, that have a common limit $t_{\infty}$ (see Figures~\ref{fig1} and~\ref{fig10}).  

We prove that, like the limit in the quadratic case, $T_{t_{\infty}}$ has   no attracting or parabolic cycle. Instead, it has an attractor $C$ contained in  the real  and  imaginary lines and it attracts almost all points on the these lines.  In the real line,  it consists of two binary Cantor sets and is forward invariant and minimal under $f_{t_{\infty}}$. On the imaginary line, it is a forward invariant, unbounded, totally disconnected,  perfect subset. 

\begin{figure}[ht]
    \includegraphics[width=4in]{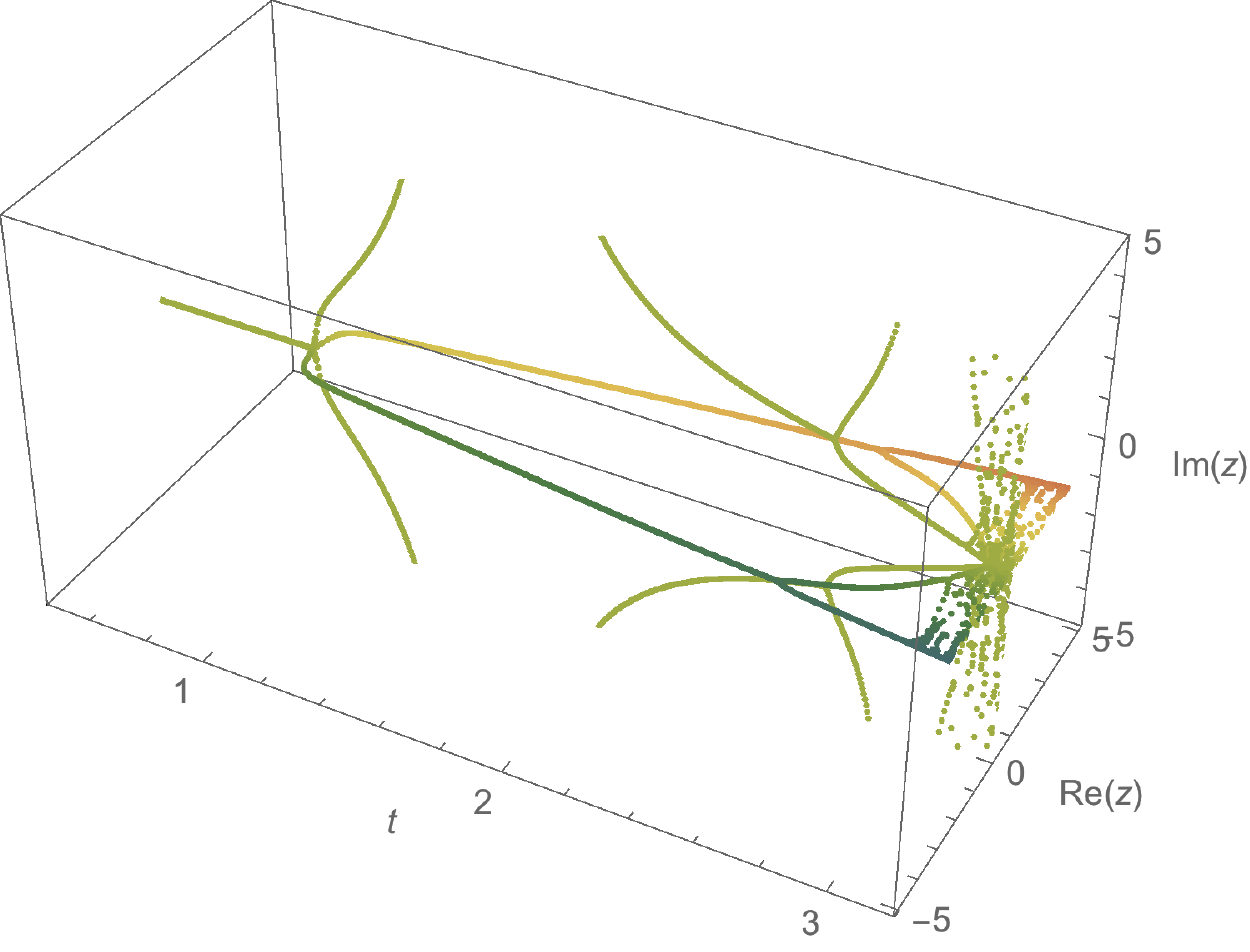}
    \caption{This is a computer generated picture illustrating the bifurcation diagram for $\{ T_{t} (z)=it \tan z\}_{0< t\leq \pi}$.}~\label{fig1}
\end{figure}

Our proofs involve modifying, for the tangent family,  standard techniques for real and complex dynamical systems. The proof that cycle doubling occurs at $\alpha_n$ uses fairly standard results about bifurcation near parabolic periodic points.  {\em Cycle merging, however, is inherently a phenomenon for meromorphic maps with symmetric asymptotic values.}   It depends on adapting the  ``renormalization'' process for polynomials to the tangent family.  For the quadratic family,  at each $t_n$ a new map,  its ``$n^{th}$-renormalization'', is defined as $Q_{t_n}^{2^n}$ restricted to a subinterval of $[-1,1]$ that contains the critical point and on which the iterate is a unimodal map. (See e.g. \cite{JBook,MSBook}).  In  the tangent family, for each $\beta_n$,  we define the ``$n^{th}$-renormalization'' to be the iterate $f_t^{2^n}$ restricted to a pair of intervals each containing a pole, and each bounded by certain pre-poles.   The renormalized map is ``tangent-like", that is, continuous and monotonic except at the poles. We prove that  the map $T_{t_{\infty}}$ is infinitely renormalizable and the orbits of the asymptotic values form an attractor $C$.  (See ~\cite{MilA} for the definition of an attractor). 

The renormalization process gives a complete solution to the existence   of cycle doubling    and cycle merging parameters  in the tangent family. The  uniqueness of the cycle doubling and cycle merging parameters is quite subtle and requires a new approach.     
To prove the cycle doubling and merging parameters are, indeed, unique, we adapt ideas used in \cite{LSS} to   study entropy of folding maps.   In particular, we apply the concepts of ``holomorphic motions'' and ``transversality'' to our context.

We note that just as renormalization for quadratics can be extended to the complex parameter plane of the quadratic and other polynomial families, (see e.g. \cite{Dou1,Dou2,JBook,McM}), renormalization exists in the complex $t$ plane of the tangent family where there is cycle doubling and merging.   In fact, renormalization also exists in dynamically natural slices (see \cite{FK}) of more general families of meromorphic functions.  We see cycle doubling and merging in those slices where there are two asymptotic values that behave symmetrically. For example, we see these phenomena in  the family $\lambda \tan^3 z$,  whose functions have two symmetric asymptotic values and one superattractive fixed point.   We leave these generalizations for another paper.

The paper is organized as follows. In \S\ref{basic},  we  describe some basic facts about the tangent family $it \tan z$ with $t \in (0, \pi]$. 
In \S\ref{quadrupling}, we show that as $t$ increases an attracting fixed point ``quadruples'' into  a period $4$ attracting cycle. Then
in \S\ref{sec:period splitting}, we show  that the  period $4$ cycle ``splits'' to become two period $2$ attracting cycles. 
In \S\ref{doubling-ren}, we show how the two period $2$ attracting cycles ``period double'' to become two period $4$ attracting cycles.  In the same section, 
we define the first renormalization; it is the paradigm for the higher order renormalizations needed to show the  general pattern.  
In \S\ref{merging1}, we give the first example of the ``cycle merging'' phenomenon:    two period $4$ attracting cycles merge to become a single period $8$ attracting cycle.
In \S\ref{std doubling}, we give the first example of the ``cycle doubling'' phenomenon:  the single period $8$   cycle doubles to become two period $8$ attracting cycles.
In \S\ref{merging2}, we show the next example of ``cycle merging'':  the two period $8$  cycles merge into one period $16$ attracting cycle.  We will see that 
this second merging is somewhat different from the first merging.
 In \S\ref{gen pattern}, we state and prove our first main result (Theorem~\ref{bif}) which says  the sequences of  cycle doubling and cycle merging phenomena exist and are part of a  general pattern for the tangent family. The proof is by induction and the main tool in the induction step is renormalization. 
In  \S\ref{tran} we introduce holomorphic motions and use them to  prove transversality at the cycle merging parameters $\beta_n$ (Theorem 3). Because  our family is restricted to  the real and imaginary axes, and depends on  real parameters, we obtain positive transversality. This is what we need for  the uniqueness of the $\beta_n$, which in turn, gives us the uniqueness of the $\alpha_n$.
 In \S\ref{inf ren}, we show there exists an  infinitely renormalizable  map $T_{t_{\infty}}$ and prove  the second main result (Theorem~\ref{infrenorm}) 
which describes the  Cantor-like structure of its strange attractor.  The  appendix contains the proof of a standard lemma on parabolic bifurcation.

\vspace{30pt}
\noindent {\em Acknowledgement.}
All three authors are partially supported by awards from PSC-CUNY. The second author is partially supported by  grants from the Simons Foundation [grant number 523341], the NSF [grant number DMS-1747905), and the NSFC [grant number 11571122]. This work was partially done when the second author visited the National Center for Theoretical Sciences (NCTS) and he would like to thank NCTS for its hospitality. We would like to thank Professors Enrique Pujals, Charles Tresser, and Tomoki Kawahira for helpful discussions.  We also thank Professor Kawahira for his help in creating some of the figures.  We would also like to thank Jonathan Brezin for proofreading and improving the exposition in our paper.

\vspace{30pt}

\section{Facts about Tangent Maps}\label{basic}
\subsection{Basic Facts}
Let $\mathbb{R}$ be the real line, $\mathbb{C}$ the complex plane, and $\widehat{\mathbb{C}}=\mathbb{C}\cup \{\infty\}$ the Riemann sphere. The tangent map is defined as
$$
\tan (z) =\frac{1}{i} \frac{e^{iz} -e^{-iz}}{e^{iz}+e^{-iz}}.
$$
Let 
\begin{equation}~\label{ttan}
T_{t} (z) =it \tan (z) =t  \frac{e^{iz} -e^{-iz}}{e^{iz}+e^{-iz}}.
\end{equation}
The family we consider in this paper is the subfamily of the tangent family
\begin{equation}~\label{tf}
{\mathcal T}=\{ T_{t} (z)=it \tan z: \mathbb{C}\to \widehat{\mathbb{C}}\}_{0<t\leq \pi}.
\end{equation}

We use $\Im=\{ iy\;|\; y\in {\mathbb R}\}$ to denote the imaginary line in the complex plane. Let $\Im^{+}=\{ iy\;|\; y>0\}$ and $\Im^{-}=\{ iy\;|\; y<0\}$ be the positive and negative rays in $\Im$, respectively. 
From the definition we have
$$
T_{t} (iy) = t \frac{e^{-y} -e^{y}}{e^{-y}+e^{y}}.
$$
Thus, 
\begin{equation}~\label{av}
\lim_{y\to \infty} T_{t}(iy) =- t \quad \hbox{and}\quad \lim_{y\to -\infty} T_{t}(iy) = t.
\end{equation}
 Since $T_{t}$ is continuous on $\Im$, we see that the restriction of $T_{t}$ to $\Im$ is a map onto $(-t, t)$ and it is a strictly decreasing function of $y$, where the ordering on $\Im$ is given by the rule: $iy >ix$ if $y>x$. 
 
The family ${\mathcal T}$ has the following properties: 
\begin{enumerate}[(a)]
\item Each $T_t$ in ${\mathcal T}$  has no critical points.
\item
The values $\{ -t, t\}$ are both omitted by all maps $T_{t}$ in ${\mathcal T}$; they are the 
{\em asymptotic values} of $T_{t}$. 
\item All  maps $T_t$ in ${\mathcal T}$ have poles at the same set of  points,  $z_{k}= k\pi + \pi/2$, $k \in \ZZ$. They are all periodic with  period $\pi$; that is, 
$$
T_{t} (z+\pi) =T_{t}(z),\quad  \forall \;\; z\in {\mathbb C}.
$$ 
 \item
 Every map $T_t$ in ${\mathcal T}$ is an odd function; that is $T_{t}(-z)=-T_{t}(z)$ and so  $T_t$  has a fixed point at $0$. 
 The multiplier at $0$ is 
 \begin{equation}~\label{ev0}
 \lambda_{0} = (T_{t})' (0) =it.
 \end{equation}
 \item 
For  every $T_t$,  the preimages of $0$ are the points $k\pi$,  $k \in \ZZ$,   on the real line. In each fundamental interval $(k\pi -\pi/2, k\pi +\pi/2)$, $T_{t}$ is a continuous strictly increasing function onto the imaginary line $\Im$. 
\end{enumerate}
 
 If we compose $T_{t}$ in ${\mathcal T}$ with itself, we obtain an odd periodic map of period $\pi$ from $\RR$ to itself, 
\begin{equation}~\label{ffunc}
f_{t}(x)=T_{t}^{2}(x)=-t \tanh (t \tan x) = -t \Big[\frac{e^{t\tan x}-e^{-t \tan x}}{e^{t \tan x}+e^{-t \tan x}} \Big],
\end{equation}
whose derivative is an even periodic function
\begin{equation}~\label{deriv}
f_{t}'(x) =-t^{2}\sech^{2} (t\tan x) \sec^{2} x=- \frac{4t^{2}\sec^{2} x}{(e^{t\tan x}+e^{-t\tan x})^{2}}.
\end{equation}
Let $\Delta x= x-(k\pi +\pi/2)<0$ (or $=x-(k\pi-\pi/2)>0$). Since  
$$
\lim_{\Delta x\to 0} -\tan x= \lim_{\Delta x\to 0}\frac{\cos \Delta x}{\sin \Delta x} =\lim_{\Delta x\to 0}\frac{1}{\Delta x} \mbox{ and }
$$
$$
\lim_{\Delta x\to 0} -\sec^{2}x = \lim_{\Delta x\to 0} \frac{1}{\sin^{2}\Delta x} = \lim_{\Delta x\to 0} \frac{1}{(\Delta x)^{2}},
$$  
\begin{equation}~\label{approxderiv}
f_{t}'(x) = -4t^{2} \frac{e^{-\frac{2t}{|\Delta x|}}}{(\Delta x)^{2}} +o(|\Delta x|).
\end{equation} 

\begin{figure}[ht]
    \includegraphics[width=4in]{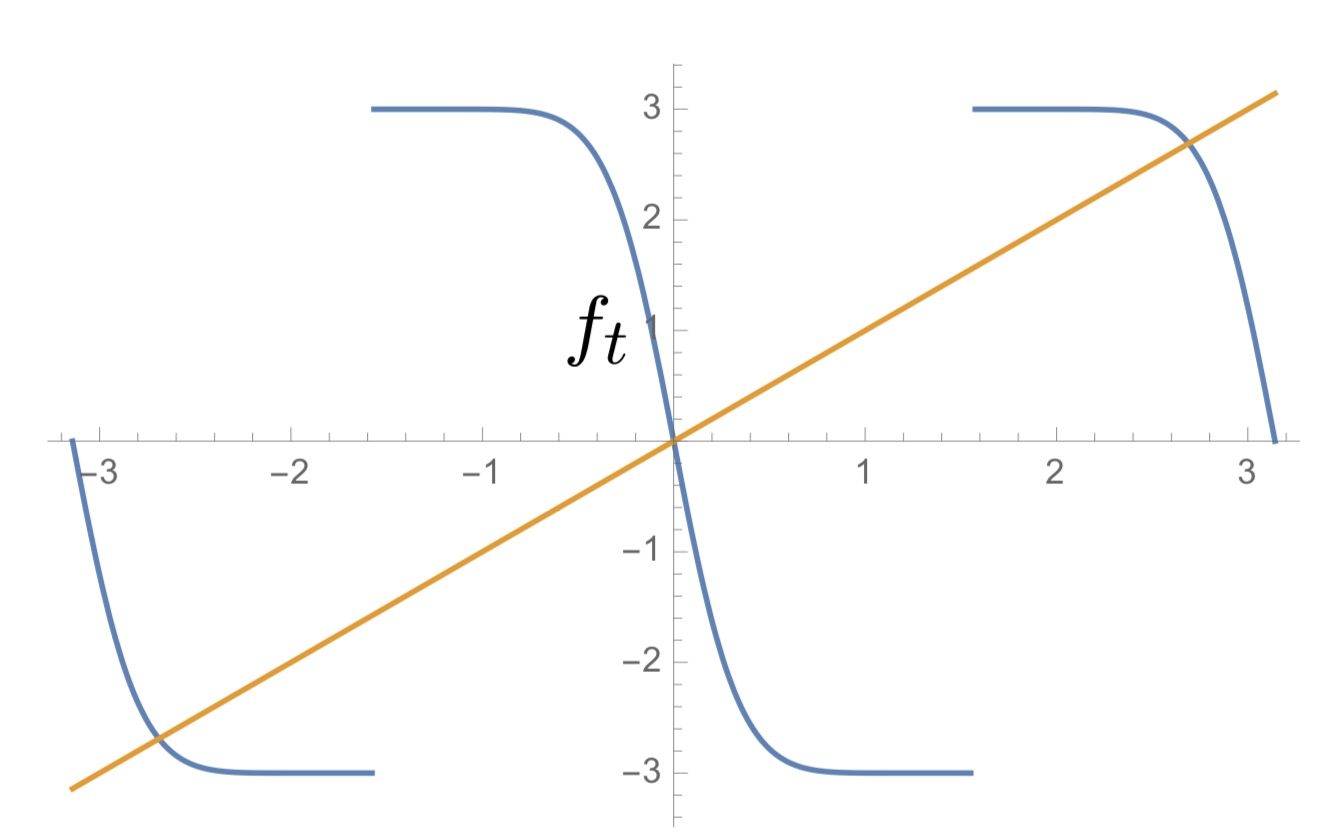}
    \caption{Graph of $f_{t}$}~\label{fig2}
\end{figure}

The map $f_{t}$ is an odd function with countably many discontinuities at the points $k\pi +\pi/2$ for $k \in \ZZ$.
It is a strictly decreasing smooth map from each fundamental interval $(k\pi -\pi/2, k\pi+\pi/2)$ onto $(-t, t)$ (see Figure~\ref{fig2}). 
The points $k\pi +\pi/2$  constitute the set of  poles of $T_t$;  by abuse of notation, we will call them the {\em poles of $f_t$.}    Similarly, since the points $x_k$ such that $f_t^m(x_k)=k\pi +\pi/2$, $m=0,1,2,\ldots, $ are   called pre-poles of $T_t$ of order $2m+1$, we will call them {\em pre-poles of $f_t$.}  

In what follows, and throughout the rest of the paper,  we use the notation $y^{\pm}$ for the upper and lower limits of $\lim_{x \to y^{\pm}} x$ and the notation  $f_t(y^{\pm})$ for the upper and lower limits  $\lim_{x \to y^{\pm}} f_t(x)$.  With this notation, 
 the function $f_{t}$ can be extended to the closed interval $[k\pi-\pi/2, k\pi+\pi/2]$ 
 continuously with image $[-t, t]$ by setting 
$$
f_t\big( (k\pi-\frac{\pi}{2})^{+}\big) =t 
$$
and 
$$
f_t\big( (k\pi+\frac{\pi}{2})^{-}\big)=-t. 
$$
Again, by abuse of notation we use $f_{t}$ for this extension as well. 
From (\ref{approxderiv}), for every $n\geq 1$, 
the $n^{th}$-derivatives satisfy $f_{t}^{(n)} ((k\pi+\pi/2)^-)=0$ and $f_{t}^{n} ((k\pi-\pi/2)^+)=0$.  
Thus the points $k\pi \pm \pi/2$ are flat critical points of $f_{t}$. 

 The Schwarzian derivative of $f_{t}$ is, by definition, 
$$
S(f_{t}) =\frac{f_{t}'''}{f_{t}'} -\frac{3}{2} \Big(\frac{f_{t}''}{f_{t}'}\Big)^{2}.
$$
Since $f_{t}$ is the composition of $-t\tanh x $ and $t\tan x$ and since $S(t\tan) =2$ and $S(-t\tanh) =-2$, we have
$$
S(f_{t}) =S(-t\tanh )\circ \tan \cdot (t \sec^{2} )^{2} +S(t \tan) = 2(- t^{2} \sec^{4} +1)
$$
Thus $S(f_{t}) <0$ when $t>1$.

\subsection{Some basic dynamics for tangent maps}

A set of points \\ $C=\{ z_{1}, \cdots, z_{n} \}$ is called a {\em period} $n$ cycle if $T^n_t(z_i)=z_i$, and $T_{t} ^{k}(z_{i})=z_{(i+k)\pmod{n}} \not=z_{i}$ for all $1\leq k\leq n-1$. Let $\lambda_{C}=(T_{t}^{n})'(z_{1})$ be the multiplier 
of $C$. We say $C$ is {\em attracting}, {\em parabolic}, or {\em repelling} if $|\lambda_{C}|<1$, $\lambda_{C}=e^{2p\pi i/q}$ or $|\lambda_{C}|>1$, respectively.
As in the theory of dynamics of rational maps (see e.g. ~\cite{Mil}), it was proved in~\cite{K,KK1,KK2}  that  the immediate basin of every attracting or parabolic cycle contains at least one asymptotic value.  By the symmetry of the tangent maps, if a given $T_t$ has an attracting cycle or parabolic cycle $C$, then $-C=\{-z_{1}, \cdots, -z_{n} \}$ is also attracting or parabolic with the same multiplier.  This means that either $C$ and $-C$ each attracts one asymptotic value or $C=-C$ and it attracts both asymptotic values.   It follows that $T_t$ can have no other attracting or parabolic cycles.

We will need the following basic lemmas. 
 
\medskip
\begin{lemma}\label{allevenper}
    For all $t>1$, the period of  any  attracting or parabolic  cycle of $T_t$ is even. 
\end{lemma}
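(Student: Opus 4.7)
The plan is to combine the result quoted above from~\cite{K,KK1,KK2}---that the immediate basin of any attracting or parabolic cycle of $T_t$ contains at least one asymptotic value $\pm t$---with the ``real-imaginary swap'' $T_t(\Re)\subset\Im$ and $T_t(\Im)\subset\Re$, in order to force any odd-period cycle to reduce to the fixed point $0$. The contradiction will then come from \eqref{ev0}, which shows that the multiplier at $0$ has modulus $t>1$, so $0$ is repelling.

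In more detail, I proceed by contradiction. Suppose $C=\{z_1,\ldots,z_n\}$ is an attracting or parabolic cycle of odd period $n$, indexed cyclically so that $T_t(z_i)=z_{i+1}$ (indices read modulo $n$). By the quoted basin theorem, at least one of the asymptotic values lies in a connected component of the immediate basin containing a cycle point; by the odd symmetry $T_t(-z)=-T_t(z)$ it suffices to treat the asymptotic value $t$, and we may relabel so that $t$ lies in the component containing $z_1$. Standard Fatou-component bookkeeping in the attracting case (or petal bookkeeping in the parabolic case) then shows that for each fixed $i\in\{0,1,\ldots,n-1\}$,
\[
T_t^{\,mn+i}(t)\longrightarrow z_{1+i}\quad\text{as }m\to\infty.
\]

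Now I use the real-imaginary swap. Since $t\in\Re$, the orbit of $t$ satisfies $T_t^{\,k}(t)\in\Re$ when $k$ is even and $T_t^{\,k}(t)\in\Im$ when $k$ is odd. If $n$ is odd, the parity of $mn+i$ alternates with $m$, so for each fixed $i$ the subsequence $\{T_t^{\,mn+i}(t)\}_{m\ge 0}$ contains infinitely many terms on the closed set $\Re$ and infinitely many on the closed set $\Im$; both sub-subsequences converge to the same limit $z_{1+i}$, forcing $z_{1+i}\in\Re\cap\Im=\{0\}$ for every $i$. Hence every cycle point equals $0$, so $n=1$ and $C=\{0\}$. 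But by \eqref{ev0} the multiplier at $0$ is $\lambda_0=it$, whose modulus $t$ exceeds $1$ by hypothesis, so $0$ is a repelling fixed point---contradicting the assumption that $C$ is attracting or parabolic.

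I do not anticipate any serious obstacle; the only step requiring a little care is the assertion that the forward orbit of the asymptotic value visits the immediate basin components cyclically and converges along each arithmetic progression $\{mn+i\}_m$ to the corresponding cycle point. In the attracting case this is a direct consequence of convergence of $T_t^n$-iterates to $z_1$ inside its Fatou component together with continuity of $T_t^i$; in the parabolic case one replaces the Fatou component by an attracting petal at $z_1$, but the cyclic structure under $T_t$ is the same and yields the same conclusion.
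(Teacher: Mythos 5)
Your proof is correct and follows essentially the same approach as the paper: one asymptotic value is attracted to the cycle, its orbit alternates between $\Re$ and $\Im$, so the cycle points must alternate between the two axes, forcing even period unless the cycle is $\{0\}$, which is ruled out since $|\lambda_0|=t>1$. You have simply made the limiting argument more explicit (via the subsequences along arithmetic progressions $mn+i$ and the observation $\Re\cap\Im=\{0\}$), which tightens a step the paper leaves implicit.
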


\begin{proof}
For real $x$ the map $T_t (x)=it\tan(x)$ maps the real line to the  imaginary line; it is  continuous and strictly monotonically  increasing on each fundamental interval. 
The map $T_t (iy)= it\tan (iy)=-t\tanh (y)$ maps the imaginary line onto the  real interval $(-t,t)$; it is  continuous and strictly  monotonically decreasing.  If $T_t$ has an attracting or parabolic periodic cycle, then one of the asymptotic values $\pm t\in \mathbb{R}$ must be attracted to it.  The orbit of the asymptotic value must alternate between the real and imaginary axes so the points in this attracting  periodic cycle must lie in the union of the real  and  imaginary lines.  It follows that the period of any attracting or parabolic periodic cycle other than $0$ must be even.
\end{proof}

\medskip
\begin{lemma}\label{interval}
Suppose $t>1$ and    suppose $T_t$ has a period $p$ attracting cycle. Then the intersection of each component of the immediate basin of this attracting cycle with the real line or the imaginary line must be an interval.
\end{lemma}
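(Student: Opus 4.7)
The plan is to combine the symmetries of $f_t = T_t^2$ with a Riemann uniformization of the basin component.

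\textbf{Reduction.} By Lemma~\ref{allevenper}, $p = 2m$ is even, and the proof of that lemma shows that the cycle points alternate between $\mathbb{R}$ and $i\mathbb{R}$. Hence each component $U$ of the immediate basin contains a unique cycle point $z_0$, lying on exactly one of the two axes, and $z_0$ is an attracting fixed point of $f_t^m$ inside $U$.

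\textbf{Symmetries.} A direct calculation from $T_t(z) = it\tan z$ gives $T_t(\bar z) = -\overline{T_t(z)}$; squaring and using the oddness of $T_t$ then yields $f_t(\bar z) = \overline{f_t(z)}$ and $f_t(-\bar z) = -\overline{f_t(z)}$. Thus $f_t^m$ commutes both with the conjugation $\sigma(z) = \bar z$ (whose fixed set is $\mathbb{R}$) and with the reflection $\tau(z) = -\bar z$ (whose fixed set is $i\mathbb{R}$). If $z_0 \in \mathbb{R}$, then $\sigma(U)$ is a Fatou component still containing $z_0$, so by disjointness of Fatou components $\sigma(U) = U$; analogously $\tau(U) = U$ when $z_0 \in i\mathbb{R}$.

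\textbf{Uniformization.} Because $T_t$ lies in the Speiser class---its only singular values are the two asymptotic values $\pm t$---a standard result in meromorphic dynamics ensures that the immediate basin component $U$ is simply connected. Assume $z_0 \in \mathbb{R}$ (the case $z_0 \in i\mathbb{R}$ is identical with $\tau$ in place of $\sigma$), and let $\phi : \mathbb{D} \to U$ be the Riemann map with $\phi(0) = z_0$ and $\phi'(0) > 0$. The uniqueness of such a normalized Riemann map, together with $\sigma(U) = U$, forces $\phi(\bar\zeta) = \overline{\phi(\zeta)}$, so $\phi|_{(-1,1)}$ is a real-analytic monotone injection whose image $I$ is an open interval contained in $U \cap \mathbb{R}$. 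Conversely, given $x \in U \cap \mathbb{R}$, set $\zeta_0 = \phi^{-1}(x)$; then $\phi(\bar\zeta_0) = \bar x = x$, and injectivity of $\phi$ forces $\zeta_0 \in (-1, 1)$. Hence $U \cap \mathbb{R} = I$, an interval.

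\textbf{Main obstacle.} The nontrivial input is the simple connectivity of $U$. For rational maps this is automatic when the Julia set is connected, but in the transcendental meromorphic setting it requires a dedicated argument. I would either invoke the standard result for Fatou components of maps in class $\mathcal{S}$ from the literature on meromorphic dynamics, or prove it directly via a Riemann--Hurwitz computation for $f_t^m$ restricted to $U$, exploiting that $T_t$ has no critical values and only two asymptotic values, which rigidly caps the branching inside $U$ and pins the Euler characteristic at $1$.
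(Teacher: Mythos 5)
Your proof follows the same strategy as the paper's: establish the conjugation symmetry $\overline{D}=D$ (respectively $-\overline{D}=D$) of the basin component, then combine it with simple connectivity to conclude that the real (respectively imaginary) trace is connected; the Riemann-map step you spell out makes explicit exactly what the paper leaves implicit after asserting the symmetry. For the simple connectivity that you correctly flag as the ``main obstacle,'' the paper cites \cite{DK}, which establishes it for the tangent family specifically; be cautious about invoking a blanket class-$\mathcal{S}$ theorem, since for transcendental \emph{meromorphic} maps (unlike transcendental entire ones) the poles can produce multiply connected Fatou components, so the Riemann--Hurwitz count you also sketch---exploiting the absence of critical points---is the safer fallback if you do not simply cite \cite{DK}.
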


\begin{proof}  By the lemma above, all points in the cycle are in the real and imaginary axes.  
Let $D$ be a component of the immediate basin of the cycle  that intersects  the real axis.   Let $x\in \overline{D}$ (the closure of $D$) be the point of the cycle. 
Then $T^p_t (x)=x$ is in or on the boundary of $D$.  It was shown in \cite{DK} that such a $D$ must be simply connected.  
    We claim  that $D$ is symmetric with respect to complex conjugation, that is, $\bar{D}=D$. This together with $D \cap {\mathbb R} \neq \emptyset$  will prove the lemma.

Now let us prove the claim. For any $z \in D$, $T^{pn}_t(z)\to x$ as $n\to \infty$.  By the symmetry of  $T_t^2$, $\overline{T^{2}_t(z)}=T^{2}_t(\bar{z})$. This implies that $T^{pn}_t(\bar{z})\to \bar{x}=x$ so that $\bar{z}\in D$.
\end{proof}

As an immediate corollary we have
\begin{corollary}
Suppose $t>1$ and suppose $T_{t}$ has an attracting cycle. Then any boundary point of  a component of the immediate basin of this attracting  cycle  
on the real or imaginary axis  is either a repelling periodic point  or a pre-pole  of $T_t$.
\end{corollary}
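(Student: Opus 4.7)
The plan is to combine the interval structure from Lemma~\ref{interval} with the strict monotonicity of $f_t$ on its fundamental intervals. By Lemma~\ref{interval}, if $D$ is a component of the immediate basin, then $D\cap\RR$ is an interval $(a,b)$ and $D\cap\Im$ (when nonempty) is also an interval; the boundary points on the respective axes are just these endpoints. Focusing on the real case, take $y=a$ (the argument for $y=b$ is the same); if $y$ is a pre-pole of $T_t$ we are done, so assume it is not.

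Let $p$ be the period of the attracting cycle, which is even by Lemma~\ref{allevenper}, so $T_t^p$ preserves $\RR$. Since $D$ is a Fatou component invariant under $T_t^p$, we have $T_t^p(D)\subseteq D$ and hence $T_t^p((a,b))\subseteq(a,b)$. Because $T_t^p$ is continuous at $a$, this gives $T_t^p(a)\in\overline{(a,b)}=[a,b]$. However $a\in\partial D$ lies in the Julia set, which is forward invariant, while $(a,b)\subset D$ lies in the Fatou set; therefore $T_t^p(a)\in\{a,b\}$. The same conclusion holds for $b$ whenever $b$ is not a pre-pole.

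To promote ``eventually periodic'' to ``periodic'' I use monotonicity. Since $(a,b)\subset D$ contains no pre-poles of $T_t$, it lies in a single fundamental interval of $T_t^p$, on which $T_t^p=f_t^{p/2}$ is a composition of maps each strictly decreasing on its fundamental interval, hence strictly monotonic. If $b$ is also not a pre-pole, continuity extends the monotonicity to $[a,b]$, so $T_t^p(a)\neq T_t^p(b)$ forces $\{T_t^p(a),T_t^p(b)\}=\{a,b\}$ and both endpoints are periodic of period $1$ or $2$ under $T_t^p$. If $b$ is a pre-pole, then $T_t^p(a)=b$ would force $a$ itself to be a pre-pole, contradicting our assumption, so $T_t^p(a)=a$ and $a$ is fixed by $T_t^p$.

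Finally, $a$ (and $b$) lies in the Julia set, so as a periodic point it is either repelling or parabolic. A parabolic cycle would have to absorb an asymptotic value in its immediate basin, but both asymptotic values $\pm t$ are already attracted by the given attracting cycle (combining the symmetry $T_t(-z)=-T_t(z)$ with the result from \cite{K,KK1,KK2} recalled just before Lemma~\ref{allevenper}). Hence $a$ and $b$ are repelling periodic; the case of boundary points on $\Im$ is identical. The main delicate step is the monotonicity argument, which requires carefully combining the absence of pre-poles inside $(a,b)$ with the endpoint case analysis, since otherwise we would only conclude that $a$ is pre-periodic rather than periodic.
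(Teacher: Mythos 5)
The paper does not actually supply an argument here---this statement is presented as an ``immediate corollary'' of Lemma~\ref{interval}---so there is no official proof to compare against. Your argument is correct and fills in exactly the reasoning the paper is taking for granted: the endpoints $a,b$ of $D\cap\RR$ (or $D\cap\Im$) lie in the Julia set, which is completely invariant, while $(a,b)$ lies in the Fatou set; hence $T_t^p(a)\in\{a,b\}$ once you have continuity at $a$; and the absence of pre-poles inside the Fatou interval $(a,b)$ is precisely what guarantees $f_t^{p/2}$ is continuous and strictly monotone on $(a,b)$, which you correctly use to turn ``eventually periodic'' into ``periodic.'' The exclusion of the parabolic case via the asymptotic-value count is also the right closing step.

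Two small points of precision, neither fatal. First, when you rule out a parabolic periodic point you write that ``both asymptotic values $\pm t$ are already attracted by the given attracting cycle''; that is only literally true in the symmetric case $C=-C$. In general $C$ absorbs one asymptotic value and $-C$ the other, and the relevant conclusion (stated in the paper just before Lemma~\ref{allevenper}) is that $T_t$ then has no \emph{further} attracting or parabolic cycles, which is all you need, but the phrasing should reflect the two cases. Second, on $\Im$ the component can be unbounded (Lemma~\ref{inftyfs}), so one endpoint may be $\pm i\infty$; your argument covers this silently because $T_t^p(a)=\infty$ would again force $a$ to be a pre-pole, but it is worth noting the interval need not be compact. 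With those caveats, the proposal is sound and is the natural way to make the paper's ``immediate'' precise.
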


If the function $T_{i\lambda}=\lambda \tan z$, $\lambda \in \mathbb C \setminus \{0\}$ has an attracting cycle it is called {\em hyperbolic}.  It was proved in \cite{KK1,FK} that the hyperbolic components of the $\lambda$-plane are universal covering spaces of the punctured disk.   This immediately implies

\begin{proposition}\label{monomult}The intersection of any hyperbolic component with the real line $\lambda=t$ is an interval $I$.
Moreover, the multiplier of the attracting cycle  for $t \in I$ is a monotonic function with absolute value between $0$ and $1$. 
\end{proposition}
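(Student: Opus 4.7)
The plan is to deduce the proposition directly from the cited fact that the multiplier map $\mu\colon H\to\DD^{*}$ is a universal covering. In particular, $H$ is simply connected and, by uniformization, conformally equivalent to the upper half plane $\HH$. The proof has two ingredients: (i) an antiholomorphic involution on $H$ whose fixed set coincides with $H\cap\RR$, and (ii) the fact that a universal cover is a local homeomorphism.

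For (i), I would first observe that $\lambda\mapsto\bar\lambda$ preserves the hyperbolic locus. The identity
\begin{equation*}
T_{i\bar\lambda}(z)=\bar\lambda\tan z=\overline{T_{i\lambda}(\bar z)}
\end{equation*}
shows that $T_{i\bar\lambda}$ is conjugate to $T_{i\lambda}$ via $z\mapsto\bar z$, so complex conjugation carries an attracting cycle of $T_{i\lambda}$ to one of $T_{i\bar\lambda}$ with conjugate multiplier. When $H\cap\RR\neq\emptyset$, this upgrades to an antiholomorphic involution $\sigma\colon H\to H$, $\sigma(\lambda)=\bar\lambda$, satisfying $\mu\circ\sigma=\bar\mu$ and having $H\cap\RR$ as its fixed set. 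Transporting $\sigma$ through a uniformizing biholomorphism $H\cong\HH$ yields an antiholomorphic involution of $\HH$ with nonempty fixed set. Any such involution is a reflection in a hyperbolic geodesic, and its fixed set is that single geodesic, a connected arc homeomorphic to $\RR$. Consequently $H\cap\RR$ is connected, hence an open interval $I$.

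For (ii), the relation $\mu(\bar\lambda)=\overline{\mu(\lambda)}$ forces $\mu(t)\in\RR$ for every $t\in I$, and $\mu(t)\neq 0$ because $\mu$ maps into $\DD^{*}$. Since $\mu$ is a universal covering, it is a local biholomorphism on $H$, so $\mu|_I$ is a continuous, locally injective map from the interval $I$ into $(-1,0)\cup(0,1)$. Therefore $\mu|_I$ is strictly monotonic, its image lies in exactly one of the two real components of $\DD^{*}\cap\RR$ (by connectedness of $I$), and $|\mu(t)|\in(0,1)$ throughout $I$.

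The main delicate step is the structural claim in (i) that an antiholomorphic involution of $\HH$ possessing a fixed point must be a reflection in a geodesic (and in particular cannot be a glide reflection, which has no fixed points and infinite order). This is a standard fact from hyperbolic geometry, so once the universal-covering description of $H$ is in hand, the proposition reduces to a bookkeeping exercise combining this classification with the local injectivity of $\mu$.
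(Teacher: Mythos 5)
Your proof is correct and follows the same route the paper takes: it treats the proposition as a consequence of the cited fact (from [KK1, FK]) that a hyperbolic component is a universal cover of $\DD^*$ under the multiplier map, which the paper itself declares is "immediate." Your contribution is to make the immediacy explicit via the reality symmetry $\mu(\bar\lambda)=\overline{\mu(\lambda)}$, the uniformization $H\cong\HH$, and the classification of antiholomorphic involutions of $\HH$; this is a faithful and correct filling-in of the details the paper leaves to the reader (the only thing worth stating a touch more explicitly is that $H\cap\RR\neq\emptyset$ together with the disjointness of components forces $\bar H=H$, so that $\sigma$ is indeed a self-map of $H$).
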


\section{Period Quadrupling: One period one to one period four}\label{quadrupling}

In this section, we  describe the  {\em period quadrupling phenomenon} in ${\mathcal T}$ that occurs as  $t$ increases through the point $1$ (see Figure~\ref{fig1} and Figure~\ref{fig10}).   That is, we see that the attracting fixed point of $T_t$ for $t<1$ becomes parabolic at $t=1$ and then repelling for $t>1$.  As $t$ increases past $1$, a new period four attracting cycle is born.

The following lemma is easy to prove.  

\medskip
\begin{lemma}~\label{Lem1}
For every $0<t< 1$,  $0$ is an attracting fixed point of  $T_{t}$; there are no other attracting cycles.
\end{lemma}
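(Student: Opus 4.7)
The plan is to combine a direct multiplier calculation with the principle (cited in \S\ref{basic}) that every attracting cycle of $T_t$ has an asymptotic value in its immediate basin. First, $T_t(0)=0$ and by~(\ref{ev0}) the multiplier is $T_t'(0)=it$, so $|T_t'(0)|=t<1$, which already shows that $0$ is an attracting fixed point.

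For the uniqueness statement, I would show that both asymptotic values $\pm t$ belong to the immediate basin of~$0$. By the odd symmetry $T_t(-z)=-T_t(z)$ it is enough to treat $+t$. I reduce to the real iterate $f_t=T_t^{2}$ on the fundamental interval $(-\pi/2,\pi/2)$. By~(\ref{ffunc}) the restriction $f_t(x)=-t\tanh(t\tan x)$ is a smooth, odd, strictly decreasing map that sends $(-\pi/2,\pi/2)$ onto $(-t,t)$, with unique fixed point $0$.

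The crux is the global derivative bound $|f_t'(x)|<1$. Using~(\ref{deriv}) and substituting $u=t\tan x$, so that $\sec^{2}x=(t^{2}+u^{2})/t^{2}$,
$$
|f_t'(x)| \;=\; t^{2}\sec^{2}x\cdot\sech^{2}(t\tan x) \;=\; \frac{t^{2}+u^{2}}{\cosh^{2}u}.
$$
The elementary inequality $|\sinh u|\ge|u|$ gives $\cosh^{2}u = 1+\sinh^{2}u \ge 1+u^{2}$, so
$$
|f_t'(x)|\;\le\;\frac{t^{2}+u^{2}}{1+u^{2}}\;<\;1 \qquad \text{whenever } t<1.
$$
Since $|\tanh|\le 1$, the compact interval $[-t,t]$ is $f_t$-invariant; on it $\sup|f_t'|$ is attained at a point and is strictly less than~$1$, so $f_t$ is a uniform contraction on $[-t,t]$ with unique fixed point $0$, and every orbit in $[-t,t]$ converges to~$0$.

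Finally, since $f_t((-\pi/2,\pi/2))\subset(-t,t)$, every real orbit enters $[-t,t]$ after one step, and applying the contraction to $x=t$ shows that the asymptotic value $t$, and hence by symmetry $-t$, is attracted to~$0$. The cited basin principle then rules out any other attracting cycle, completing the proof. The only delicate point is the global derivative estimate: the naive bounds $\sech^{2}\le 1$ or $|\tan x|\le\tan t$ are not sharp enough on $(-\pi/2,\pi/2)$, but the change of variables $u=t\tan x$ reduces the inequality to the textbook fact $\sinh^{2}u\ge u^{2}$, which closes the argument for every $t\in(0,1)$.
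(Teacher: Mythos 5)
Your proof is correct, but it takes a more explicit and elementary route than the paper's. The paper, after computing the multiplier $|T_t'(0)|=t<1$, invokes the general principle cited in \S\ref{basic} (the immediate basin of any attracting or parabolic cycle of $T_t$ contains an asymptotic value, proved in~\cite{K,KK1,KK2}) \emph{twice}: once, applied to the already-known attracting fixed point $0$, to conclude that one of $\pm t$, say $t$, satisfies $T_t^n(t)\to 0$, with oddness of $T_t$ then giving $T_t^n(-t)\to 0$; and again to conclude that since both asymptotic values are accounted for, no other attracting cycle can exist. You instead prove $T_t^n(t)\to 0$ directly, using the basin principle only for the final uniqueness step. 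Your substitution $u=t\tan x$ rewrites the derivative from~(\ref{deriv}) as $(t^2+u^2)/\cosh^2 u$, and the bound $\cosh^2 u\ge 1+u^2$ gives $|f_t'(x)|<1$ pointwise on the whole fundamental interval whenever $t<1$; restricting to the compact $f_t$-invariant interval $[-t,t]$ yields a genuine uniform contraction with fixed point $0$. You are right that the naive estimates $\sech^2\le 1$ (leaves $t^2\sec^2 x$ unbounded) or $|\tan x|\le\tan t$ (gives $t^2(1+\tan^2 t)>1$ for $t$ near $1$) fail, so the change of variable is genuinely needed. Your approach costs a computation the paper's soft argument avoids, but in exchange gives a self-contained, quantitative statement: $f_t$ contracts uniformly on $[-t,t]$ and hence $0$ attracts all of $(-\pi/2,\pi/2)$ under $f_t$, rather than merely inferring attraction of the asymptotic values from the abstract basin theorem.
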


\begin{proof}
Since $T_{t}(0)=0$ and $|\lambda_{0}|= |T_{t}'(0)| =|it\sec^{2} (0)| =t<1$, $0$ is an attracting fixed point.
The asymptotic values of $T_{t}$ are $-t$ and $t$. One of them, say $t$, is attracted to $0$; that is, $T_{t}^{n} (t)\to 0$ as $n\to \infty$. Since $T_{t}$ is an odd function,   $T_{t}^{n}(-t)= -T_{t}^{n} (t) \to 0$ as $n\to \infty$ so that 
  both $t$ and $-t$ are attracted to $0$ under iterations of $T_{t}$.
Therefore $C=\{0\}$ is the only attracting cycle for $T_{t}$, and its period is $1$.
\end{proof}

\medskip
\begin{lemma}~\label{Lem2}
For $t=1$, $0$ is a parabolic fixed point and there are no other attracting or parabolic cycles.
\end{lemma}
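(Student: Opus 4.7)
The plan is to handle the two assertions of the lemma in sequence. The first --- that $0$ is a parabolic fixed point of $T_1$ --- is immediate from~\eqref{ev0}: the multiplier at $0$ is $\lambda_0 = i$, a primitive fourth root of unity, so $0$ is parabolic, and in particular $0$ is a parabolic fixed point of $T_1^{4}$ with multiplier $1$. For the second assertion I appeal to the Keen--Kotus theorem cited at the end of \S\ref{basic}: any other attracting or parabolic cycle $C$ would carry one of the two asymptotic values $\pm 1$ in its immediate basin. Since $T_1$ is odd, $C$ and $-C$ are either equal or form two symmetric cycles and together account for at most both of $\pm 1$. Hence, once I show that $+1$ lies in the parabolic basin of $0$, the $z\mapsto -z$ symmetry places $-1$ there as well, exhausting the supply of asymptotic values and ruling out $C$.

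To place $+1$ in the parabolic basin, I reduce to the real dynamics of $f_1(x)=T_1^{2}(x)=-\tanh(\tan x)$ on the fundamental interval $(-\pi/2,\pi/2)$, noting that $1\in(-\pi/2,\pi/2)$ and that $f_1$ maps this interval into $(-1,1)\subset(-\pi/2,\pi/2)$. Setting $u(x):=\tanh(\tan x)$, so that $f_1=-u$, the oddness of $\tan$ and $\tanh$ gives the clean factorization
\[
g(x):=f_1^{2}(x) \;=\; -u(-u(x)) \;=\; u(u(x)),
\]
so $g$ is an odd, smooth, strictly increasing self-map of $(-\pi/2,\pi/2)$ with $g(0)=0$ and $g'(0)=1$. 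If I can prove the pointwise inequality $u(x)<x$ for every $x\in(0,\pi/2)$, then because $u$ is increasing I immediately deduce $g(x)=u(u(x))<u(x)<x$ throughout $(0,\pi/2)$; by oddness $g(x)>x$ on $(-\pi/2,0)$, so $0$ is the unique fixed point of $g$ on the whole interval.

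The inequality $u(x)<x$ for $x\in(0,\pi/2)$ becomes, under the substitution $y=\tan x$, the one-variable estimate $\tanh y<\arctan y$ for $y>0$. I verify this by observing that $\arctan y - \tanh y$ vanishes at $y=0$ and that
\[
\frac{d}{dy}\bigl(\arctan y-\tanh y\bigr) \;=\; \frac{1}{1+y^{2}} - \frac{1}{\cosh^{2}y} \;=\; \frac{\sinh^{2}y-y^{2}}{(1+y^{2})\cosh^{2}y} \;>\; 0 \qquad (y>0),
\]
where the positivity uses the elementary inequality $\sinh y>y$ on $(0,\infty)$.

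With $0$ established as the unique fixed point of the increasing map $g$ on $(-\pi/2,\pi/2)$, every orbit $\{g^{n}(x)\}$ starting in this interval is monotone and bounded, hence converges; the limit must be the unique fixed point $0$. Taking $x=+1$ yields $T_1^{2n}(1)\to 0$, placing $+1$ (and, by oddness, $-1$) in the parabolic basin of $0$, which by the Keen--Kotus argument of the first paragraph completes the proof. The main obstacle is the one-variable estimate $\tanh y<\arctan y$; everything else is a formal consequence of the oddness of $T_1$, the monotonicity of $u$, the clean factorization $g=u\circ u$, and the elementary fact that composing a function strictly below the diagonal with itself stays strictly below the diagonal.
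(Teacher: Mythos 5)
Your proof is correct, and it takes a genuinely more elementary route than the paper's. The paper's proof (referring back to Lemma~\ref{Lem1}) works abstractly: it cites the Keen--Kotus result that the immediate basin of any attracting or parabolic cycle must contain an asymptotic value, so the parabolic basin of $0$ automatically captures one of $\pm 1$, and then oddness captures the other. You instead \emph{construct} the convergence $T_1^{4n}(1)\to 0$ by hand, reducing it through the clean factorization $f_1^2=u\circ u$ (with $u(x)=\tanh\tan x$) and the substitution $y=\tan x$ to the one-variable inequality $\tanh y<\arctan y$, verified via $\sinh y>y$. That calculus is correct, and the monotone-orbit argument that follows is sound: $u$ maps $(0,\pi/2)$ into $(0,1)\subset(0,\pi/2)$, the composed map $g=u\circ u$ is increasing and strictly below the diagonal on $(0,\pi/2)$, so $g^n(1)$ decreases to the unique fixed point $0$, and the orbit never lands on a pole or on $0$ itself, so $1$ genuinely lies in the parabolic basin. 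What you gain is a self-contained, quantitative argument that does not lean on the basin theorem to locate an asymptotic value in the basin of $0$; you only use that theorem, together with oddness, for the final exclusion of any other cycle, which is the same last step as the paper's. What you give up is brevity: the paper's proof is a one-liner modulo the cited theorem. Both are valid.
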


\begin{proof}
 Because $T_{1}(0)=0$ and $ \lambda_{0}=T_{1}'(0) = i\sec^{2} (0) = i$, $0$ is a parabolic fixed point. The asymptotic values of $T_{1}$ are $1$ and $-1$.
As in  the proof of Lemma~\ref{Lem1}, both $-1$ and $1$ are attracted to $0$ under iterations of $T_{1}$ so  $0$ is the only parabolic cycle for $T_{1}$ and there are no attractive cycles. 
\end{proof}

As $t$ increases through $1$, we see a period  {\em  quadrupling phenomenon in $\mathcal T_t$}:  as the  single period $1$ cycle becomes repelling, a new  period $4$ cycle is born.
\medskip
\begin{lemma}
 As $t$ increases through $1$, $0$ becomes a repelling fixed point of $T_{t}$ and  a period $4$ attracting cycle forms near $0$. This attracting cycle persists for all $t \in (1,\pi/2)$. 
\end{lemma}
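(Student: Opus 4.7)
The statement decomposes into three pieces. That $0$ becomes repelling for $t > 1$ is immediate from $T_t'(0) = it$, so $|T_t'(0)| = t > 1$. For the remaining two pieces I work with the real second iterate $f_t = T_t^2$ of \S\ref{basic}: since $T_t$ swaps $\RR$ and $i\RR$, any period-$4$ cycle of $T_t$ sitting in $\RR \cup i\RR$ restricts to a period-$2$ cycle of $f_t$ on $\RR$ (two real and two purely imaginary points), so it suffices to produce and then propagate such a period-$2$ cycle for $f_t$.

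At $t = 1$ the multiplier $f_1'(0) = -1$ makes $0$ a parabolic fixed point of $f_1$, which as $t$ passes $1$ falls into the classical period-doubling regime for a smooth real map with negative Schwarzian ($S(f_t)(0) = 2(1-t^2) < 0$ for $t > 1$, using the formula already computed in \S\ref{basic}). The appendix's parabolic-bifurcation lemma therefore supplies a period-$2$ attracting orbit $\{p(t),-p(t)\}$ of $f_t$ for $t$ slightly greater than $1$, with $p(t) \to 0^+$ as $t \to 1^+$. Its $T_t$-lift $\{p(t),\, it\tan p(t),\, -p(t),\, -it\tan p(t)\}$ is a $T_t$-cycle of exact period $4$ (the four points are distinct since $p(t) \neq 0$ forces $it\tan p(t)$ to be a nonzero purely imaginary number), with multiplier equal to $(f_t'(p(t)))^2 \in [0,1)$.

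For persistence across all of $(1, \pi/2)$, I would track $p(t)$ as the positive root of
\[
\phi_t(p) \;:=\; t\tanh(t\tan p) - p = 0, \qquad p \in [0, \pi/2).
\]
This equation is exactly $f_t(p) = -p$. Direct computation gives $\phi_t(0) = 0$, $\phi_t'(0) = t^2 - 1 > 0$, and $\phi_t(p) \to t - \pi/2 < 0$ as $p \to (\pi/2)^-$ (since $\tanh$ saturates at $1$ while $p$ approaches $\pi/2$), so the intermediate value theorem supplies a root $p(t) \in (0, \pi/2)$ for every $t \in (1,\pi/2)$. Differentiating yields $\phi_t'(p) = -f_t'(p) - 1$, so at any root past the initial rise of $\phi_t$ one has $\phi_t'(p(t)) \leq 0$, hence $f_t'(p(t)) \geq -1$; combined with $f_t' < 0$ on $(-\pi/2,\pi/2)$ by \eqref{deriv}, this locates $(f_t'(p(t)))^2$ in $[0,1]$, and the lift produces an attracting period-$4$ cycle of $T_t$ once the inequality is strict.

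The main obstacle is precisely this strict inequality: ruling out an interior parabolic bifurcation where the multiplier returns to $1$ inside $(1, \pi/2)$. I expect to close the argument by combining two ingredients. First, the negative-Schwarzian property of $f_t$, together with the asymptotic-value principle recalled in \S\ref{basic}, forces $f_t$ to carry at most one attracting cycle on $\RR$ (up to the $\pm$ symmetry), so $\{p(t),-p(t)\}$ remains inside a single hyperbolic component and Proposition~\ref{monomult} forces monotonicity of the multiplier in $t$. Second, endpoint asymptotics pin the range: the multiplier tends to $1$ as $t \to 1^+$ (parabolic birth), while the estimate $\sech^2(t\tan p(t)) = O\!\bigl(e^{-2t\tan p(t)}\bigr)$, with $t\tan p(t) \to +\infty$ rapidly enough to beat the polynomial blow-up $\sec^2 p(t) = O\!\bigl(1/(\pi/2 - p(t))^2\bigr)$ as $p(t) \to (\pi/2)^-$, shows the multiplier tends to $0$ as $t \to (\pi/2)^-$. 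Monotonicity between these endpoints keeps the multiplier strictly in $[0,1)$ on $(1, \pi/2)$, so the cycle is attracting throughout; the interval terminates exactly at $t = \pi/2$, where the asymptotic value $t$ collides with the pole $\pi/2$ of $T_t$.
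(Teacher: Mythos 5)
Your setup is sound and closely parallels the paper's: locating the repelling fixed point from $|T_t'(0)|=t>1$, reducing to a period-$2$ cycle of $f_t$, and producing a root $p(t)\in(0,\pi/2)$ by the intermediate value theorem (the paper looks for a fixed point of $f_t^2$ in $(0,\pi/2)$, which is the same equation). Your observation that at the crossing root one gets $f_t'(p(t))\geq -1$ and hence $(f_t^2)'(p(t))\leq 1$ is also correct. The genuine gap is in upgrading $\leq 1$ to $<1$. Your continuation argument is circular: the claim that ``$\{p(t),-p(t)\}$ remains inside a single hyperbolic component'' across all of $(1,\pi/2)$ is exactly what needs to be proved, and Proposition~\ref{monomult} only gives monotonicity of the multiplier along the $\RR$-trace of one fixed hyperbolic component --- it does not by itself rule out that this component terminates at some interior parabolic parameter before reaching $\pi/2$. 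Moreover, the $p(t)$ produced by the IVT has no a priori continuity in $t$, so the endpoint limits of the multiplier that you rely on are not automatic.

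The paper closes the gap pointwise, at each fixed $t$, with no continuation in $t$ at all: if $(f_t^2)'(p(t))=1$, then since $(f_t^2)'(0)=t^4>1$ and $S(f_t^2)<0$, the minimum principle for negative-Schwarzian maps forces $(f_t^2)'\geq 1$ on $[0,p(t)]$; integrating gives $p(t)=f_t^2(p(t))-f_t^2(0)\geq p(t)$, with equality only if $(f_t^2)'\equiv 1$, which is impossible because $(f_t^2)'(0)>1$ (and because an affine branch would have vanishing Schwarzian). Hence the interior fixed point is strictly attracting at every $t\in(1,\pi/2)$, and uniqueness follows because each of the two symmetric fixed points must absorb one of the two asymptotic values. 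You had all the ingredients (you even cite the negative Schwarzian), but you used them only for a structural uniqueness claim rather than for the strict inequality itself. A smaller point: Lemma~\ref{parabolicbur} is stated for the multiplicative perturbation $\lambda f(z)$, not for an arbitrary one-parameter family, so invoking it at $t=1$ requires an extra reduction that you do not supply; the paper does not use that lemma here and instead detects the cycle by the Schwarzian argument just described.
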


\begin{proof}
 Since  $t>1$ and $|\lambda_{0}|=|T_{t}'(0)| =t$, $0$ is a repelling fixed point of $T_{t}$.
Now we consider  the  function $f_{t}^{2} =T_{t}^{4}$.  It is an odd strictly increasing function on 
$(-\pi/2, \pi/2)$ with maximal value $f_{t}^2((\pi/2)^{-})=t\tanh (t\tan t))$ and minimal value $f_{t}^2((-\pi/2)^{+})=-t\tanh (t\tan t))$. 
Since $t \in (0,\pi/2)$, $f_{t}^2((\pi/2)^{-}) \in (0,\pi/2)$. Since $0$ is a repelling fixed point, by elementary calculus 
we see that $f_{t}^2$ has a fixed point $p_{2,t}\in (0, \pi/2)$. By  symmetry, it has also a fixed point 
$p_{4,t}= -p_{2,t} \in (-\pi/2, 0)$. Since $S(f_{t}^2) <0$, we see  that both these  fixed points have to be attracting and   that they are the only attracting fixed points in $(0, \pi/2)$ and in $(-\pi/2, 0)$, respectively. 

The point $p_{2,t}$ attracts $t$ and the point $p_{4,t}$ attracts $-t$. Since both asymptotic values are attracted by these fixed points of $f_{t}^2$, there are none available to be attracted to any other cycle so there are no other attracting or parabolic cycles.

Let $x=T_{t}^{2}(p_{2,t}) \in (-\pi/2, 0)$. Since $T_{t}^{4} (p_{2,t})=p_{2,t}$, $T^{4}_{t} (x) =T^{4}_{t} (T_{t}^{2} (p_{2,t}))= T^{2}_{t} (T^{4}_{t}(p_{2,t}))= T_{t}^{2}(p_{2,t}))=x$. 
Thus $x$ is a fixed point of $f_{t}^2$ in $(-\pi/2, 0)$. This implies that $x=p_{4,t}$. Hence the set $\{ p_{2,t}, p_{4,t}\}$ is an attracting period $2$ cycle of $f_{t}$ that attracts both asymptotic values, $\pm t$. 
Let $p_{3,t}=T_t(p_{2,t})\in \Im^{+}$ and let $p_{1,t}=T_t(p_{4,t})\in \Im^{-}$. Thus we see that 
$$
C_{4,t}=\{ p_{1,t}, p_{2,t}, p_{3,t}, p_{4,t}\}
$$
is a period $4$ cycle of $T_{t}$ which attracts both asymptotic values $\pm t$.  Therefore   $T_t(x)$ has no other attracting or parabolic cycles.
\end{proof}

 We denote the multiplier of  the cycle $C_{4,t}$ by 
$$
\lambda_{4, t}= (T_{t}^{4})' (p_{1,t}).
$$

\section{Period Splitting: One period four  to two period two}
\label{sec:period splitting}

As $t$ increases past $\pi/2$ we see a {\em period splitting} phenomenon in ${\mathcal T}$; that is, the attracting period four cycle becomes   two  attracting period two cycles (see Figure~\ref{fig1} and Figure~\ref{fig10}). 
 
Suppose $t=\pi/2$;  then   the extended function $f_{\pi/2}(x)$ satisfies 
$$
f_{\frac{\pi}{2}}(\pi/2^{-})=-\frac{\pi}{2} \quad \hbox{  and  } \quad  f_{\frac{\pi}{2}}(-\pi/2^{+})=\frac{\pi}{2}
$$ 
so that  as $|x|$ approaches $\pi/2$ from below, 
$\{ -\pi/2, \pi/2\}$ is a period 2 cycle of $f_{\pi/2}(x)$.  On the other hand 
$$
f_{\frac{\pi}{2}}(\pi/2^{+})=\frac{\pi}{2}\quad \hbox{and}\quad f_{\frac{\pi}{2}}(-\pi/2^{-})=-\frac{\pi}{2}
$$ 
so that, as $|x|$  approaches $\pi/2$ from above,   
$-\pi/2$ and $\pi/2$ are  both  fixed points of $f_{\pi/2}(x)$. 

We call the pair $\{-\pi/2, \pi/2\}$ (or the points $-\pi/2$ and $\pi/2$)  a {\em virtual cycle} of period $2$  (or  {\em virtual fixed points}) for $f_{\pi/2}$;  we call the parameter $\pi/2$ a {\em virtual cycle parameter}.  Since  the  limit of the multiplier from either side satisfies $(f_{\pi/2}^{2})'(\pm \pi/2^{\pm})=0$, we also call $\pi/2$ a {\em virtual center}.   
 
 The names virtual cycle and virtual center  are justified because, like the super attracting cycles at the {\em centers} of the hyperbolic components of the Mandelbrot set for quadratic polynomials, where the critical value belongs to the cycle and the multiplier is zero, the asymptotic value belongs to the virtual cycle and the limit multiplier is zero.  The virtual cycle, however, is not really a cycle because  the asymptotic value is only the ``image'' of infinity under $T_t$ in a limiting sense;  it is   a {\em virtual image}.  In the same sense, the asymptotic value is the {\em virtual image} of the pole under $f_t$.  See \cite{CK,FK,KK1} for a more detailed discussion of virtual centers and virtual cycle parameters;  in particular, it is proved in \cite{KK1} that for the tangent family, every virtual cycle parameter  is a  virtual center and vice versa.   

We now want to see what properties of $f_t$ are also properties of $T_{t}$  as  $t$  approaches  the virtual center $\pi/2$ from either  direction.

When $t<\pi/2$, $T_{t} (t) \in \Im^{+}$ and $T_{t}(-t)\in \Im^{-}$. Thus we have
 $$ 
 \lim_{t \to (\frac{\pi}{2})^-} T_{t} (t) = i\infty  \mbox{  and  }  \lim_{t \to (\frac{\pi}{2})^- } T_{t}(-t) = -i\infty.
 $$
 Hence  the set
 $$
 C_{4,(\pi/2)^{-}}=\{-\pi/2, -i\infty, \pi/2, i\infty\}
 $$ 
 is  the limiting cycle of $C_{4,t}$ as $t\to (\pi/2)^{-}$.  It is a period $4$  cycle whose (limit) mulitplier $\lambda_{4,(\pi/2)^{-}}=\lim_{t\to (\pi/2)^{-}}\lambda_{4,t}$ is easily seen to be $0$.
Continuing with our notation above, we  call the limiting cycle $C_{4,(\pi/2)^{-}}$ a {\em virtual  cycle}, this time with period $4$, and the points in the set  {\em virtual periodic points}.   
 
 When $t>\pi/2$,  $T_{t} (t) \in \Im^-$ and $T_{t}(-t)\in \Im^+$. Thus we have 
$$ 
 \lim_{t \to (\frac{\pi}{2})^+} T_{t} (t) = -i\infty  \mbox{  and  }  \lim_{t \to (\frac{\pi}{2})^+} T_{t}(-t) = i\infty.
 $$
The two sets 
$$
C_{2,(\frac{\pi}{2})^{+}}=\{ \frac{\pi}{2}, -i\infty\}\quad \hbox{and} \quad C_{2,(\frac{\pi}{2})^{+}}'=\{-\frac{\pi}{2}, i\infty\}
$$ 
are  virtual  period $2$ cycles whose respective   (limit) multipliers are $0$.

These    virtual  cycles  become actual period $2$ cycles for  $t>\pi/2$.  More precisely, 
 there are  two fixed points of $f_{t}$, $p_{2,t}\in (\pi/2, t)$ and $p_{2,t}'=-p_{2,t}\in (-t, -\pi/2)$.  (see Figure~\ref{fig3}).

\begin{figure}[ht]
\centering
    \includegraphics[width=4in]{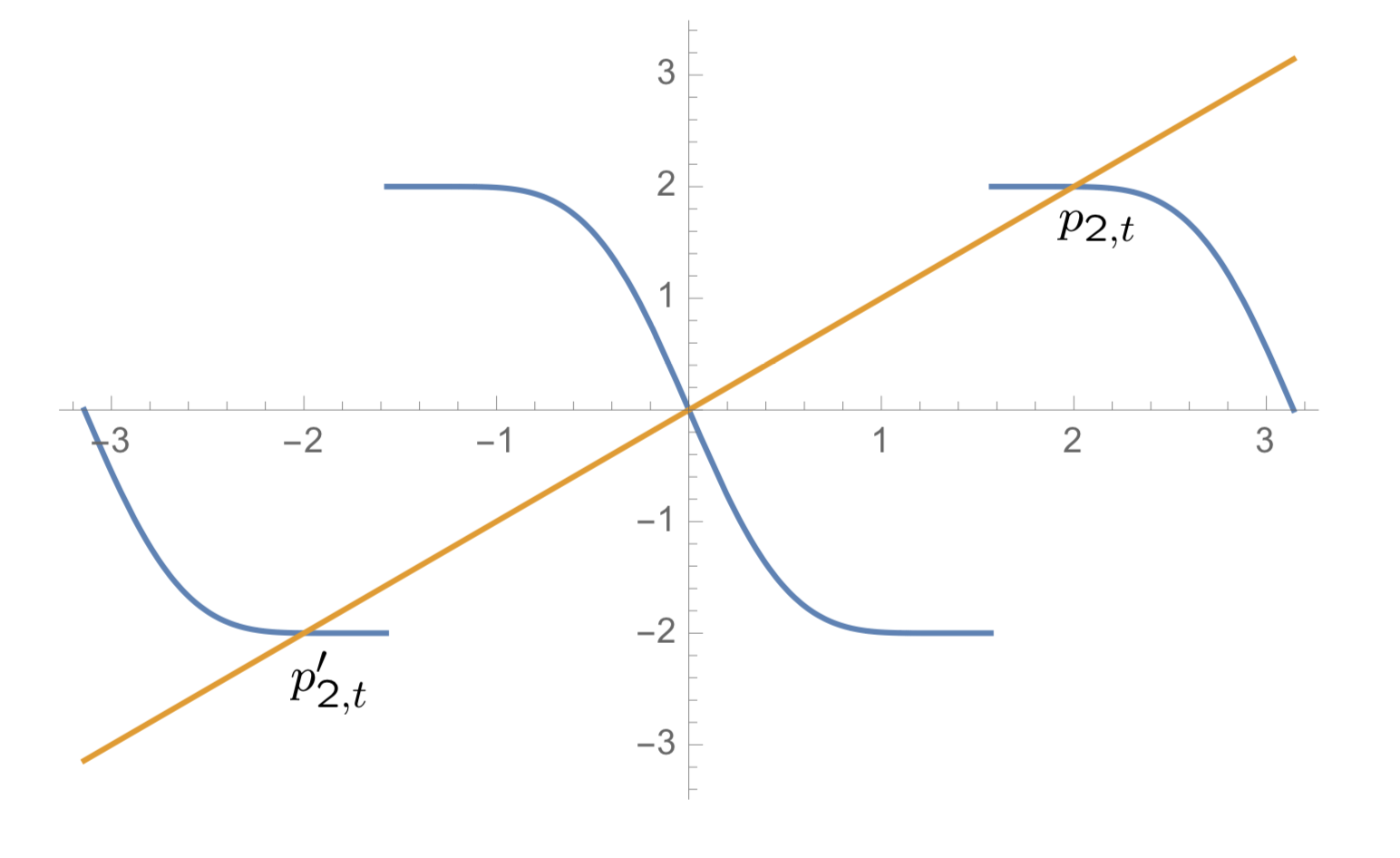}
    \caption{For  $t>\pi/2$, $f_{t}$ has two fixed points $p_{2,t}$ and $p_{2,t}'$.}~\label{fig3}
\end{figure}

Let  $p_{1,t}=T_{t}(p_{2,t})\in \Im^-$ and $p_{1,t}'=T_{t}(p_{2,t}') \in \Im^+$.  Then 
we have two period $2$ cycles for $T_{t}$, 
$$
C_{2,t}=\{ p_{1,t}, p_{2,t}\}\quad \hbox{and}\quad C_{2,t}'=\{ p_{1,t}', p_{2,t}'\}=-C_{2,t}
$$

Using the prime notation for both the derivative and the symmetric periodic point,  the multipliers of $C_{2,t}$ and $C_{2,t}'$ are
$$
\lambda_{2,t} = (T_{t}^{2})' (p_{1,t}) \quad \hbox{and}\quad \lambda_{2,t}' = (T_{t}^{2})' (p_{1,t}')
$$
  By Propsition~\ref{monomult}, for $t$ greater than, but close to $\pi/2$, 
$ \lambda_{2,t}=\lambda_{2,t}' \in (-1,0)$ and is monotonic in some interval to the right of $\pi/2$, $(\pi/2,\alpha_1)$.  Since  $\lim_{t \to \pi/2^+} \lambda_{2,t} =0$,  $\lambda_{2, \alpha_{1}}=-1$ and $C_{2,\alpha_{1}}$ and $C_{2,\alpha_{1}}'$ are both parabolic cycles. 
Now $C_{2,(\pi/2)^{+}}$ and $C_{2,(\pi/2)^{+}}'$ are the limiting cycles of $C_{2,t}$ and $C_{2,t}'$, respectively. 
Moreover, since  the multiplier  function $\lambda_{4,t}$  of the cycle $C_{4,t}$ for $t <\pi/2$ has limit $0$ at $\pi/2^-$,  it can be extended continuously for $t > \pi/2$  by setting $\lambda_{4, t} = \lambda_{2,t}\lambda_{2,t}'$. It thus becomes  a continuous function taking values from $1$ at $t=1$ to $0$ at $t=\pi/2$ and then back to $1$ at  $\alpha_{1}$ (see also~\cite{K,KK1,KK2}). 

This proves 

\medskip
\begin{lemma}~\label{alpha1}
There exists   $\alpha_{1} \in (\pi/2,\pi)$ such that when $t \in (\pi/2,\alpha_{1})$,   $0$ is a repelling fixed point of $T_{t}$ and $T_{t}$ has two period $2$ attracting cycles, $C_{2,t}$ and $C_{2,t}'=-C_{2,t}$. 
At $\alpha_{1}$, $C_{2,\alpha_{1}}$ and $C_{2,\alpha_{1}}'$ are period $2$ parabolic cycles both of whose multipliers are $-1$.   For all $t \in (\pi/2, \alpha_1]$, the cycle $C_{2,t}$ attracts the asymptotic value $t$ and the cycle $C_{2,t}'$ attracts the asymptotic value $-t$;     $T_{t}$ has no other attracting or parabolic cycles.  
\end{lemma}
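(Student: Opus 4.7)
The plan is to assemble the lemma from the observations already made in the paragraphs immediately preceding its statement, together with one application of Proposition~\ref{monomult} and a symmetry/accounting argument. Most of the work has been done: the virtual-cycle analysis at $t=\pi/2$ gives the two fixed points $p_{2,t}\in(\pi/2,t)$ and $p'_{2,t}=-p_{2,t}$ of $f_t$ for $t$ slightly larger than $\pi/2$, and these produce the period-$2$ cycles $C_{2,t}$, $C'_{2,t}$ of $T_t$ with $C'_{2,t}=-C_{2,t}$.

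First I would note that for $t>1$ we have $|T_t'(0)|=t>1$, so $0$ is a repelling fixed point; in particular $0$ is not one of the cycles in question. Next, I would verify that the multiplier $\lambda_{2,t}=(T_t^2)'(p_{2,t})=f_t'(p_{2,t})$ is real and negative. This is immediate from the formula
\[
f_t'(x)=-t^{2}\sech^{2}(t\tan x)\,\sec^{2} x
\]
evaluated at the real point $p_{2,t}$, which is strictly negative. Equivalently, $T_t'(p_{2,t})=it\sec^{2}(p_{2,t})$ and $T_t'(p_{1,t})=it\sec^{2}(iy)$ are both purely imaginary, so their product $\lambda_{2,t}$ is real and negative.

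The core step is to apply Proposition~\ref{monomult} to the hyperbolic component $H$ of the $\lambda$-plane that contains $C_{2,t}$ for $t$ slightly greater than $\pi/2$. Its intersection with the positive real axis is an interval; by the preceding discussion this interval has left endpoint $\pi/2$, where the limit multiplier is $0$. Define $\alpha_1$ to be its right endpoint. Proposition~\ref{monomult} tells us $|\lambda_{2,t}|$ is monotonically increasing on $(\pi/2,\alpha_1)$, taking values in $(0,1)$; combined with the sign information above, $\lambda_{2,t}$ strictly decreases from $0$ through the interval $(-1,0)$. By continuity and monotonicity, $\lambda_{2,\alpha_1}=-1$, so $C_{2,\alpha_1}$ is parabolic; the symmetric statement for $C'_{2,\alpha_1}$ follows from the relation $C'_{2,t}=-C_{2,t}$ and oddness of $T_t$. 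The inequality $\alpha_1<\pi$ follows because the interval of hyperbolicity is open with real-negative multiplier strictly less than zero on the interior, so by monotonicity it must terminate at a finite $\alpha_1$ at which $|\lambda_{2,t}|=1$, and this must occur inside $(\pi/2,\pi)$ (otherwise Proposition~\ref{monomult} would force $|\lambda_{2,\pi}|\le 1$ with the cycle persisting, contradicting the strict monotonic departure from $0$ on any subinterval that extends arbitrarily close to $\pi$; concretely one checks that $f'_{t}(p_{2,t})$ already passes through $-1$ well before $\pi$).

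Finally, the attraction of the asymptotic values and the absence of other cycles follow from the general principle recalled in \S\ref{basic}: the immediate basin of any attracting or parabolic cycle contains at least one asymptotic value, and by the oddness symmetry of $T_t$, if a cycle $C$ attracts one asymptotic value then $-C$ attracts the other. Since $p_{2,t}>0$ lies on the same side of $0$ as the asymptotic value $t$ and is the unique attracting fixed point of $f_t$ in $(0,t)$ (as shown above by the virtual-cycle analysis and $S(f_t)<0$), the cycle $C_{2,t}$ attracts $t$, and symmetrically $C'_{2,t}$ attracts $-t$. This exhausts the asymptotic values, ruling out any further attracting or parabolic cycle. The main obstacle in this proof is the single quantitative point $\alpha_1<\pi$; once one knows $\lambda_{2,t}$ is real, monotonic, and starts at $0$, the existence of a parabolic parameter follows formally, but verifying that it is strictly less than $\pi$ requires either the explicit derivative calculation above or a continuity argument in $H$.
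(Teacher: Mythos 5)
Your proposal follows essentially the same route as the paper: produce the two fixed points $p_{2,t}$, $p'_{2,t}$ of $f_t$ from the virtual-cycle analysis at $t=\pi/2$, observe the multiplier is real and negative, invoke Proposition~\ref{monomult} for monotonicity on the hyperbolic interval, let $\alpha_1$ be its right endpoint where the multiplier reaches $-1$, and use the asymptotic-value accounting to rule out other cycles. The sign computation you insert ($T_t'$ purely imaginary at a real point and at an imaginary point, product real negative) is a correct and helpful elaboration of what the paper states without proof.

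The one place your argument does not actually close is precisely the point you flag as ``the main obstacle'': that $\alpha_1<\pi$. Your parenthetical reasoning is circular --- ``strict monotonic departure from $0$'' on an interval approaching $\pi$ is perfectly compatible with the multiplier converging to some value in $(-1,0)$ without ever reaching $-1$, so monotonicity alone does not force the hyperbolic interval to terminate before $\pi$; and ``concretely one checks'' is an appeal to computation rather than a proof. A clean way to finish: at $t=\pi$ the asymptotic value is $\pi$ and $T_\pi(\pi)=i\pi\tan\pi=0$, so the orbit of $\pm\pi$ lands on the fixed point $0$, which is repelling for $t>1$; hence $T_\pi$ has no attracting or parabolic cycle at all, $\pi$ lies outside every hyperbolic component, and the intersection of the relevant hyperbolic component with the real line is a subinterval of $(\pi/2,\pi)$, giving $\alpha_1<\pi$. (The paper itself does not spell this out in the passage leading to Lemma~\ref{alpha1}; it is implicit in the later discussion around $\beta_1$, where $c_2(f_\pi)=0$ plays the same role.) Aside from this, your proof is sound and matches the paper's structure.
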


\section{Period Doubling and Renormalization}\label{doubling-ren}

In this section, we will see that as $t$ increases through $\alpha_{1}$, $T_{t}$ undergoes 
a standard period doubling phenomenon.  Because the multipliers of the parabolic cycles of $T_{\alpha_1}$ (parabolic fixed points of of $f_{\alpha_1}$) are $-1$ and the map $f_{t}$ has  negative Schwarzian derivative, 
 both period $2$ 
attracting cycles become  period $2$ repelling cycles and, at the same time, two new period $4$ attracting cycles with positive multiplier are born.
    Thus, ``the period is doubled'' and as $t$ increases, it moves into a new hyperbolic component, where, by Proposition~\ref{monomult}, the multipliers of the new doubled cycles decrease monotonically to $0$.  The next lemma describes what happens at the right endpoint $\beta_1$ of this interval where the multipliers become zero.   In particular, the discussion shows that $\alpha_1$ is the only point in the interval $(\pi/2, \beta_1)$ where there are parabolic fixed points of $f_t$. This  is the first step of a renormalization process.

\medskip
\begin{lemma}~\label{beta1}
 There exists  $\beta_{1} \in (\alpha_{1},\pi)$ such that for $t \in (\alpha_{1},\beta_{1})$,  
$0$ remains a repelling fixed point of $T_{t}$,  the period $2$ cycles 
$C_{2,t}$ and $C_{2,t}'$ persist, but  they are now  repelling and $T_{t}$ has two new period $4$ attracting cycles: 
$$ C_{4,t}=\{ p_{4,1,t}, p_{4,2,t}, p_{4,3,t}, p_{4,4,t}\}$$
with  $p_{4,4,t}<p_{4,2,t} \in (\pi/2, \pi), p_{4,3,t}>p_{4,1,t}\in  \Im^-$; and 
$$ C_{4,t}'=-C_{4,t}=\{ p_{4,1,t}', p_{4,2,t}', p_{4,3,t}', p_{4,4,t}'\} $$
with $p_{4,2,t}'<p_{4,4,t}'\in (-\pi, -\pi/2)$, $p_{4,3,t}'<p_{4,1,t}'\in  \Im^+$ (see Figure~\ref{fig4}). 
The map $T_{t}$ has no other attracting or parabolic periodic cycles. The multipliers $\lambda_{4,t}$ and $\lambda_{4,t}'$ 
of these new period $4$ cycles are equal, real, positive and   decrease monotonically from $1$ to $0$ as $t$ increases from $\alpha_{1}$ to $\beta_{1}$. 
\end{lemma}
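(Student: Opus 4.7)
The plan is to establish Lemma~\ref{beta1} by treating the transition at $t=\alpha_1$ as a standard period-doubling bifurcation, exploiting two structural facts already at our disposal: the negative Schwarzian derivative of $f_t$ for $t>1$ (established in \S\ref{basic}), and Proposition~\ref{monomult}, which tells us that on the real trace of any hyperbolic component the multiplier is a continuous, strictly monotonic function of $t$ with values in the open unit interval.

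First I would analyse $f_t$ in a neighbourhood of the parabolic fixed point $p_{2,\alpha_1}$ of $f_{\alpha_1}$. Because $f_{\alpha_1}'(p_{2,\alpha_1})=-1$, the equation $f_t(x)-x=0$ has nondegenerate $x$-derivative there, so the implicit function theorem yields a smooth family of fixed points $p_{2,t}$ of $f_t$ for $t$ in a neighbourhood of $\alpha_1$, together with a smooth multiplier $\lambda_{2,t}=f_t'(p_{2,t})$. By Lemma~\ref{alpha1} combined with Proposition~\ref{monomult}, $\lambda_{2,t}$ decreases strictly from $0$ to $-1$ on $(\pi/2,\alpha_1]$, so by smooth monotonicity $\lambda_{2,t}<-1$ for $t$ slightly larger than $\alpha_1$, meaning $C_{2,t}$ persists as a repelling period-$2$ cycle of $T_t$. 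The companion cycle $C_{2,t}'=-C_{2,t}$ is handled by the odd symmetry $z\mapsto -z$.

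Next I would produce the new period-$4$ cycle of $T_t$ by solving $f_t^2(x)=x$ near $p_{2,\alpha_1}$. The locus $\{f_t^2(x)-x=0\}$ contains the fixed-point curve $x=p_{2,t}$, and a standard normal-form reduction of parabolic bifurcation at a multiplier-$(-1)$ fixed point (the content of the lemma deferred to the appendix) produces an additional smooth branch of solutions $x=q_t$, $t>\alpha_1$, with $f_t(q_t)\neq q_t$; this is a genuine period-$2$ orbit $\{q_t, f_t(q_t)\}$ of $f_t$. The negative Schwarzian derivative of $f_t^2$, obtained by chain rule from $S(f_t)<0$, rules out repelling fixed points of $f_t^2$ accumulating on $p_{2,t}$, forcing the newborn orbit to be attracting rather than repelling. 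Setting $p_{4,2,t}=q_t$ and $p_{4,4,t}=f_t(q_t)$, continuity in $t$ keeps both points in $(\pi/2,\pi)$ near $p_{2,\alpha_1}$, and the strict monotonicity of $f_t$ on the fundamental interval $(\pi/2, 3\pi/2)$, with $p_{2,t}$ lying strictly between them, forces $p_{4,4,t}<p_{2,t}<p_{4,2,t}$. The imaginary-axis points $p_{4,1,t}=T_t(p_{4,4,t})$ and $p_{4,3,t}=T_t(p_{4,2,t})$ then lie in $\Im^-$ by property (e) of \S\ref{basic}, and the inequality $p_{4,3,t}>p_{4,1,t}$ follows from the monotonicity of $T_t$ on $(\pi/2, 3\pi/2)$. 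The symmetric cycle $C_{4,t}'=-C_{4,t}$ arises from the odd symmetry.

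Finally, to define $\beta_1$ and describe the multiplier on $(\alpha_1,\beta_1)$, I would apply Proposition~\ref{monomult} to the hyperbolic component containing $C_{4,t}$. Its real trace is an open interval $I$ with left endpoint $\alpha_1$ on which $\lambda_{4,t}=(T_t^4)'(p_{4,1,t})$ is real, strictly monotonic, and takes values in $(0,1)$; the multiplier extends continuously to $1$ at $\alpha_1$, since the cycle collapses to the parabolic fixed point of $f_t^2$ of multiplier $(-1)^2=1$. Hence $\lambda_{4,t}$ decreases, and I take $\beta_1$ to be the right endpoint of $I$, where $\lambda_{4,t}\to 0$ and a point of the cycle accumulates on a pole of $T_t$, producing a virtual center in the sense of \S\ref{sec:period splitting}. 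Throughout $I$, continuity from $\alpha_1$ keeps the asymptotic values $\pm t$ trapped in the basins of $C_{4,t}$ and $C_{4,t}'$ respectively, so by the classical fact that every attracting or parabolic cycle of a meromorphic map of finite type attracts at least one singular value (\cite{K,KK1,KK2}), $T_t$ admits no other attracting or parabolic cycles. The principal obstacle is the bifurcation analysis in the second step: verifying that the newly born period-$2$ orbit of $f_t$ emerges on the correct side of $\alpha_1$ and with attracting rather than repelling multiplier. This is precisely where the negative Schwarzian plays an essential role and is cleanly handled by the parabolic-bifurcation lemma in the appendix.
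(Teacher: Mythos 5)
Your overall strategy tracks the paper's closely — the negative Schwarzian period-doubling at the multiplier-$(-1)$ fixed points, the odd symmetry producing the companion cycle, and Proposition~\ref{monomult} to control the multiplier on the new hyperbolic component — but there are two concrete gaps.

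First, your appeal to the appendix lemma is misplaced. Lemma~\ref{parabolicbur} is stated for a parabolic germ $f(z)=z+a_nz^n+o(z^n)$ with multiplier exactly $1$, and is applied by perturbing with $\lambda$ near $1$; the paper invokes it only in \S\ref{std doubling} for the multiplier-$(+1)$ cycles. At $t=\alpha_1$ the fixed points $p_{2,\alpha_1}$, $p_{2,\alpha_1}'$ of $f_{\alpha_1}$ have multiplier $-1$, which is a different normal form, and the paper's own proof of Lemma~\ref{beta1} does not cite the appendix lemma at all: it relies solely on the negative Schwarzian derivative of $f_t$ (together with the fact that the multiplier crosses $-1$) to assert the supercritical period doubling. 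Your second-iterate idea could be developed, but as written you would need a parameter family of the form $\lambda f(z)$ with $\lambda$ crossing $1$, and that is not what $t\mapsto f_t^2$ near $p_{2,t}$ provides.

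Second, and more substantively, your definition of $\beta_1$ as ``the right endpoint of the hyperbolic component'' leaves unverified exactly the things the lemma asserts: that this endpoint lies in $(\alpha_1,\pi)$, and that the multiplier there tends to $0$ because the cycle runs into a pole. The paper obtains both at once via a concrete intermediate-value argument on the auxiliary function $c_2(f_t)=-t\tanh(t\tan t)=f_t^2\big((\pi/2)^+\big)$: a mean-value estimate at the attracting fixed point shows $c_2(f_t)>\pi/2$ for $t\in(\pi/2,\alpha_1)$, while $c_2(f_\pi)=0$, so there is $\beta_1\in(\alpha_1,\pi)$ with $c_2(f_{\beta_1})=\pi/2$. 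This simultaneously produces the virtual cycle and shows the multiplier limits to $0$. The explicit $c_2(f_t)$ is not a cosmetic choice — it is the seed of the renormalization machinery used throughout \S\ref{merging1}--\S\ref{gen pattern} and in the transversality argument; your abstract endpoint definition bypasses a structure the rest of the paper depends on, and without the IVT step you have not actually established existence with $\beta_1<\pi$.
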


\begin{figure}[ht]
\centering
    \includegraphics[width=2in]{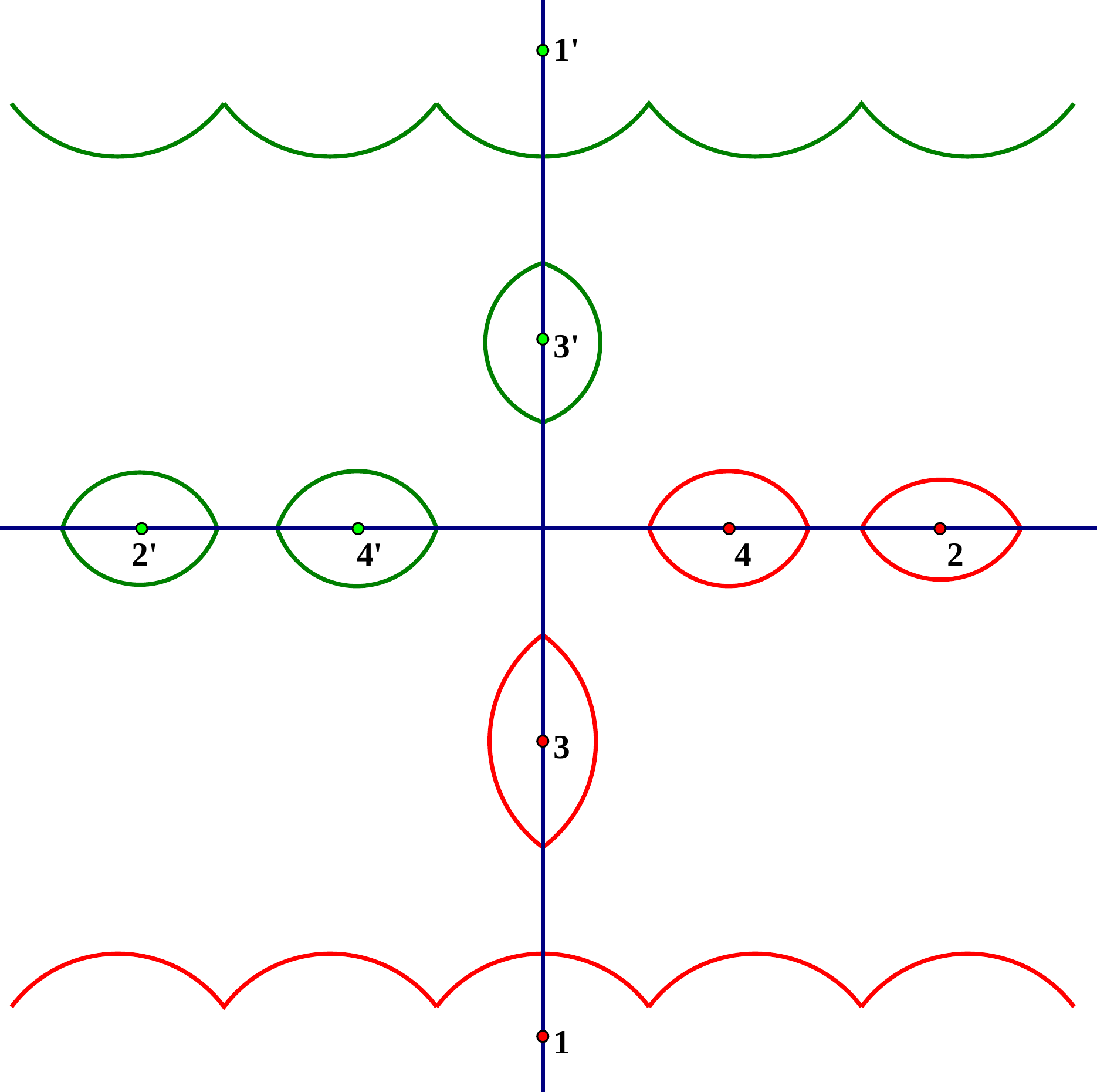}
    \caption{Two period $4$ attracting cycles $C_{4,t}$ (red) and $C_{4,t}'$ (green). }~\label{fig4}
\end{figure}

\begin{proof}
For $t \in (\pi/2, \pi]$, let $a_{t}\in (0, \pi/2)$ and $b_{t}\in (\pi/2, \pi)$  be   pre-poles such that $f_{t}(a_{t})=-\pi/2$ and $f_{t}(b_{t})=\pi/2$. Then $-a_{t}\in  (-\pi/2, 0)$ and $-b_{t}\in (-\pi, -\pi/2)$ are also pre-poles with $f_{t} (-a_{t})=\pi/2$ and $f_{t}(-b_{t})=-\pi/2$. Moreover, by symmetry and periodicity, $a_{t}+b_{t}=\pi$.   Let $I_{t} =[-b_{t}, -a_{t}]\cup [a_{t}, b_{t}]$ and consider $f^{2}_{t}|_{I_{t}}$. It is  strictly increasing  and continuous on the following set of intervals (see Figure~\ref{fig:firstrenorm})
$$
f_{t}^2: [-b_{t},-\pi/2]\to [-t, t\tanh (t\tan t)];
$$
$$
f_{t}^2: [-\pi/2,-a_{t}]\to [-t\tanh (t\tan t), t];
$$
$$
f_{t}^2: [a_{t}, \pi/2]\to [-t, t\tanh (t\tan t)];
$$
and
$$
f_{t}^2: [\pi/2, b_{t}]\to [-t\tanh (t\tan t), t].
$$
\begin{figure}[ht]
\centering
    \includegraphics[width=4in]{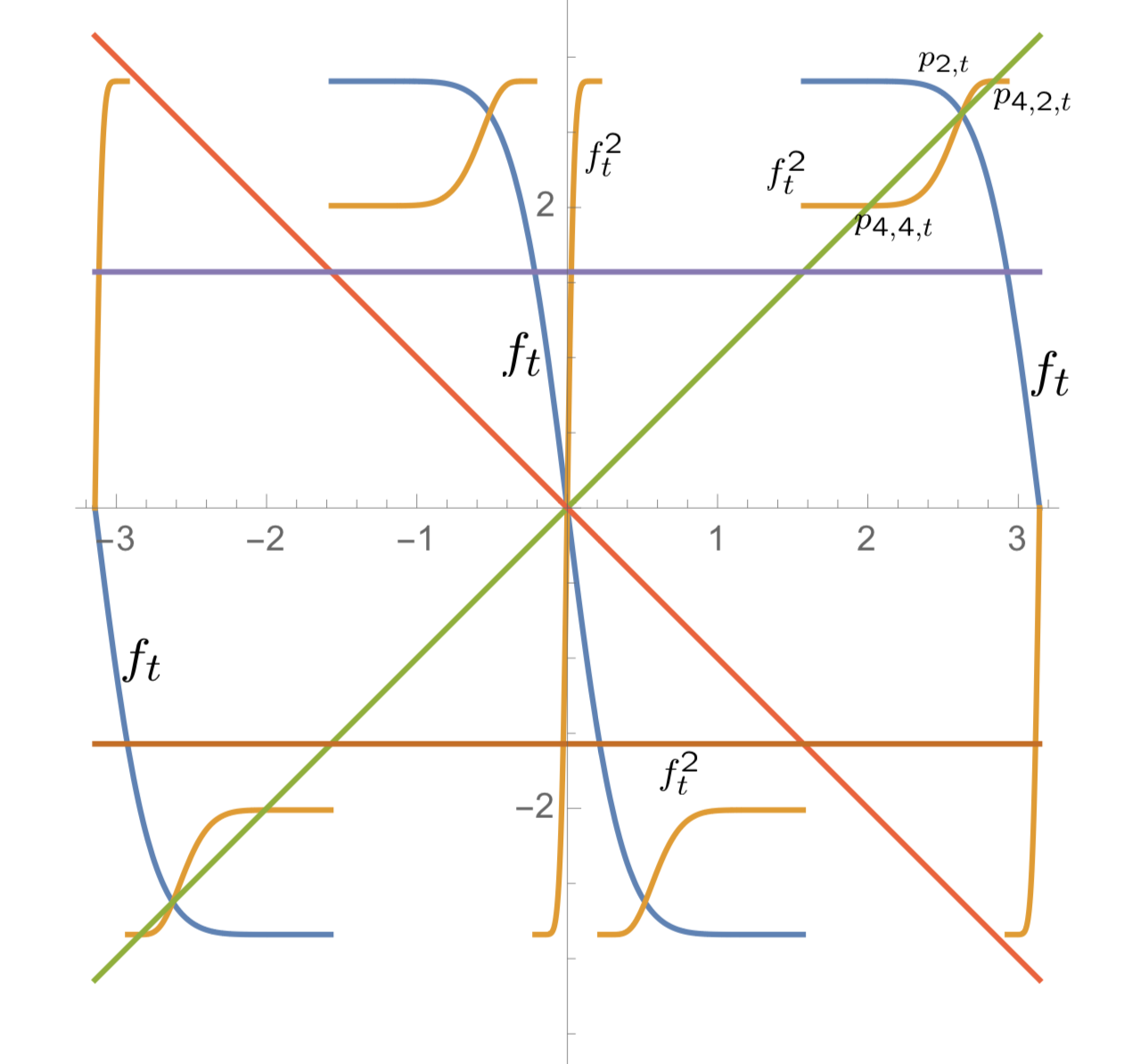}
    \caption{For  $t = 2.84 >\alpha_1$, $f_{t}$ has repelling fixed points $p_{2,t}$ and $p_{2,t}'=-p_{2,t}$ and $f_t^2$ has attracting fixed points $\{p_{4,4,t}, p_{4,2,t}\}$ and $\{-p_{2,4,t}, -p_{4,4,t}\}$. $f_t$ is blue and $f_{t}^2$ is blue.}~\label{fig:firstrenorm}
\end{figure}
 
When $t=\alpha_{1}$,  $p_{2,\alpha_1}$ and  $p_{2,\alpha_1}'=-p_{2,\alpha_1}$  are  parabolic fixed points of $f_{t}$   and both have multiplier $-1$.
Since the Schwarzian derivative of $f_{t}$ is negative, we see that for $t>\alpha_{1}$, but close to it, both of these fixed points are repelling  and near each an attracting period $2$ cycle appears.   The new cycles are $\{p_{4,2,t},p_{4,4,t}\}$ and  $\{p_{4,2,t}', p_{4,4,t}'\}$ and they are arranged as follows: 
$$-\pi< p_{4,2,t}'< p_{2,t}'<p_{4,4,t}' <-\pi/2 <0<\pi/2 <p_{4,4,t}<p_{2,t}<p_{4,2,t}<\pi.$$
Now  $p_{4,3,t}= T_{t}(p_{4,2,t})$ lies above $p_{4,1,t}= T_{t}(p_{4,4,t})$ in  $\Im^-$ and 
\begin{equation}~\label{two4}
C_{4,t}=\{ p_{4,1,t}, p_{4,2,t}, p_{4,3,t}, p_{4,4,t}\}
\end{equation}
is a period $4$ attracting cycle of $T_t$. 
Similarly $p_{4,3,t}'= T_{t}(p_{4,2,t}')$ lies below $p_{4,1,t}'= T_{t}(p_{4,4,t}')$ in  $\Im^+$ and 
\begin{equation}~\label{two4'}
C_{4,t}'=\{ p_{4,1,t}', p_{4,2,t}', p_{4,3,t}', p_{4,4,t}'\}
\end{equation}
is another period $4$ attracting cycle of $T_t$.  

  We now want to show that as $t$ increases,  it reaches a parameter $\beta_1$ where the cycles $C_{4,t}$ and $C_{4,t}'$ become  virtual cycles of period $4$;  that is,  as $t$ tends to $\beta_1$ from below,  the asymptotic values  tend  to pre-poles of $T_t$, the limit cycles contain  poles   and  the multipliers of the cycles have limit $0$.  To find $\beta_1$, 
 we  consider the continuous function $$c_{2}(f_t)= - t\tanh (t\tan t)= f_{t}^2((\pi/2)^{+}),$$ defined on in the interval $(\pi/2,\pi]$.  Note that $c_2(f_t)$ is the image of the asymptotic value $t$ under $f_t$.  In Section~\ref{sec:period splitting} we saw that  as a limit, the asymptotic value is the virtual image  of the pole $\pi/2$ under $f_t$; that is, $f_t(\pi/2^+)=t$.  
  Thus, for example, we can think of $c_2(f_t)=f_t^3(b_{1,t}^-)$ as the  ``virtual image  of a pre-pole"  of $f_t$.   
 
We claim that if $ t \in (\pi/2,\alpha_{1})$,  then  $c_{2}(f_t)>\pi/2$.  To see this is so, note that  for $t$ in this interval,  
the fixed point  $p_{2,t}$ of $f_t$ (and  $f_{t}^2) $  is attracting and greater than $\pi/2$. 
 Thus, $0<(f^2_{t})' (x)<1$ for $\pi/2 < x < p_{2,t}$ so that by the mean value theorem 
      $$p_{2,t}-c_{2}(f_t)= f_{t}^2(p_{2,t}) - f_{t}^2 (\pi/2) < p_{2,t}-\pi/2$$  and therefore $c_{2}(f_t)>\pi/2$.  

 Notice that   $c_{2}(f_\pi)=0$. 
Thus, by the intermediate value theorem,  there must be a number $\beta_1$ in $(\alpha_{1},\pi)$ satisfying  $c_{2} (f_{\beta_{1}})=\pi/2$;  that is, at $t=\beta_1$, the image of the asymptotic value is a pole and is therefore a point in a virtual cycle. 
A priori, there may be more than one solution, but by the transversality of Theorem~\ref{transversality}, $c_2$ is strictly monotonic at $\beta_1$ so the solution is locally unique.    

 Both $\pi/2$ and $-\pi/2$  are fixed points of $f^2_{\beta_{1}}$ and the limit of the multiplier at each of these fixed points is $0$.  Thus
$$
C_{4,\beta_{1}}=\{ \beta_{1}, p_{4,3,\beta_{1}},  \pi/2,  -i\infty\}
$$
and 
$$
C_{4,\beta_{1}}'=\{ -\beta_{1}, p_{4,3,\beta_{1}}',  -\pi/2,  i\infty\}
$$
are  virtual cycles of  period $4$.  By symmetry the multipliers of the cycles $C_{4,t}$ and $C_{4,t}'$ are equal.  By Proposition~\ref{monomult}  they decrease monotonically   from $1$ to $0$ as $t$ increases from $\alpha_{1}$ to $\beta_{1}$.     
In this interval, each of the cycles $C_{4,t}$ and $C_{4,t}'$  attracts one asymptotic value so $T_{t}$ has no other attracting or parabolic periodic cycles.  This  proves the lemma.  
\end{proof}

This lemma gives us the existence of $\beta_{1}$. The uniqueness of $\beta_{1}$ in the interval $(\alpha_{1}, \alpha_{2})$ follows from 
Corollary~\ref{uniqueness}. 

 \subsection{The first renormalization}
  We can now define the first renormalization of the function $f_t$ and to do so we  introduce two auxiliary functions. 
   We  set $c_1(f_t)=f_t((\pi/2)^{+})=t$, the positive asymptotic   value and as above, we set $c_2(f_t)=f_t(t) = f_t^2((\pi/2)^{+})$,  the image of the asymptotic value under $f_t$.  It is also  the virtual image of the pole of $f_{t}^2$ and thus the positive asymptotic value of $f_t^2$.  
 
\medskip

 Let $I_{1,t} =[-b_{1,t}, -a_{1,t}]\cup [a_{1,t}, b_{1,t}]=I_{1,t}^{-} \cup I_{1,t}^{+}$  denote the pair of intervals in Lemma~\ref{beta1}.  
 We introduce the index $1$ for future reference. 
 
 \medskip
 
 \begin{definition}~\label{1ren}
 We say that $f_{t}$ (or $T_{t}$) is renormalizable if the map $f_t^2$ has a unqiue pre-image of each of the poles of $f_t$, $ \pm\pi/2$, in each of the intervals composing $I_{1,t}$.    If this is true, we  call the map 
$$
{\mathcal R}_t= f_{t}^{2}|I_{1,t} =T_{t}^{4}|I_{1,t}
$$ 
 {\em the first renormalization of $f_{t}$ (or $T_{t}$)}.   The poles of $\Rt$ are $\pm a_{1,t}, \pm b_{1,t}$ and the asymptotic values $\pm t$ of $f_t$ are their virtual images;   the limits $\Rt(\pi/2^{\pm})$ are the asymptotic values of $\Rt$.    
   \end{definition}
   By Lemma~\ref{beta1}, since $c_1(f_t)=t > \pi/2$ and  $t > \beta_1$ implies $c_2(f_t) < \pi/2$, $\Rt$ is defined  for $t$ in some interval to the right of   $\beta_1$.  (See Figure~\ref{fig5}).
  As $t$ approaches $\pi$,  $\Rt(\pi/2^{-})=\Rt(\pi/2^{+})=0$ so we see that  $\Rt$ is defined for all $t \in (\beta_1, \pi)$.   
  
   \begin{figure}[ht]
\centering
    \includegraphics[width=4in]{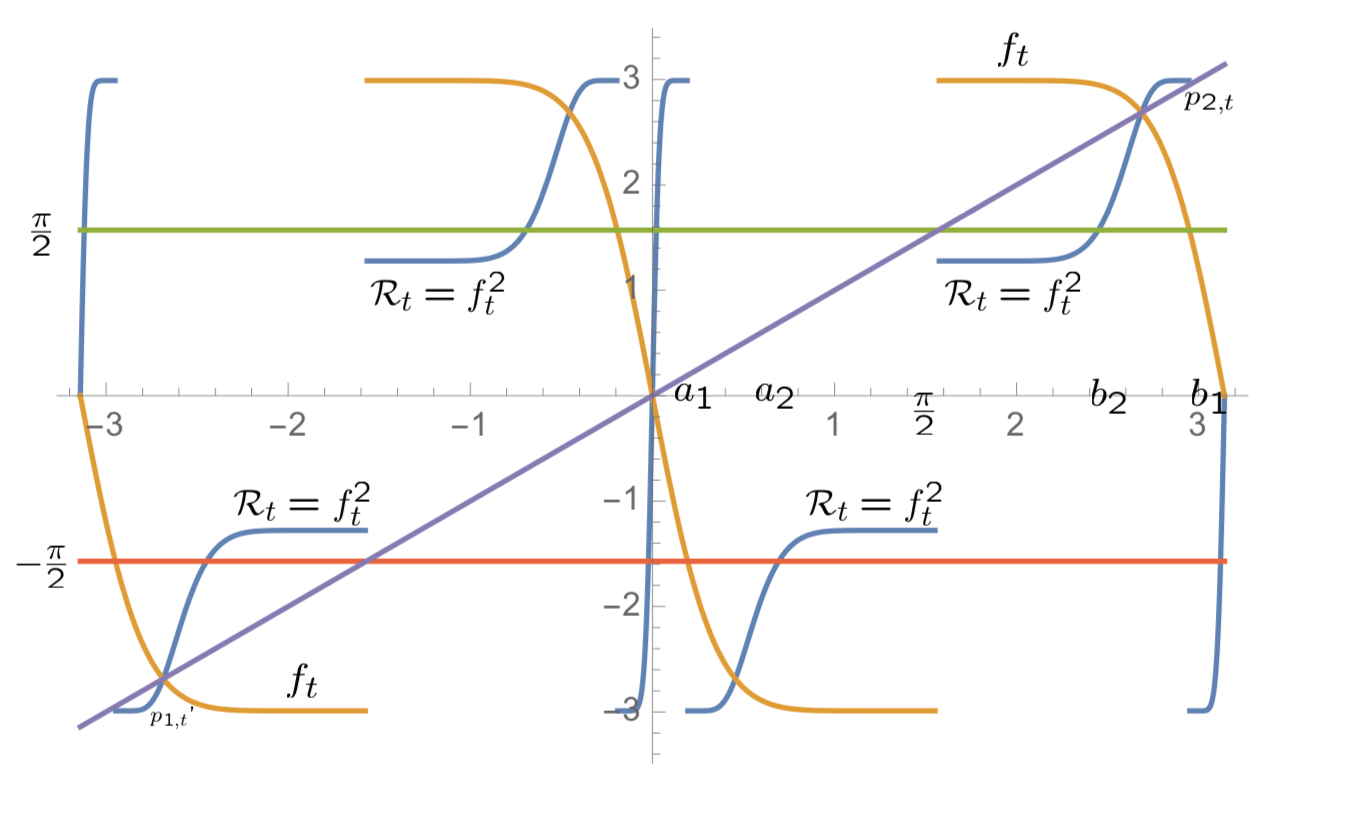}
    \caption{The First Renormalization: \\$t=2.99 > \beta_1 \approx 2.94$. 
  ($a_1=a_{1,t} , b_1=b_{1,t})$, $f_t$ is yellow and $f_t^2$ is blue.}~\label{fig5}
\end{figure}

   This is the first step of a renormalization process for tangent maps that will be defined in \S\ref{inf ren}.   The endpoints of the intervals of $I_{1,t}$ are pre-poles of $f_t$ and hence poles of $\Rt$;  each is divided into two by a pole of $f_t$.  As we saw above, $\Rt=f_t^2$ maps each subinterval of  $I_{1,t}$  continuously onto either $[c_2(f_t), c_1(f_t)]$ or its negative. 
    Since $c_1(f_t)=t > \pi/2$ and  $c_2(f_t) < \pi/2$, these image intervals each contain a pole of $f_t$, ($\pm \pi/2)$.    
  Otherwise said, in $I_t$, $f_t^2$ is monotonically strictly increasing  with discontinuities at $\pm \pi/2$ and, just as the renormalized quadratic map is unimodal where it is defined,  $\mathcal{R}_t$ is  ``tangent-like''  on $I_{1,t}$ (see Figure~\ref{fig5}).   To make the analogy complete, we should make an affine conjugation so that $\Rt$ is defined on $[\pi, -\pi]$ again, but to do this would make the notation even more complicated than it is. 

The following remark is important for  the discussion in next three sections of the period merging phenomenon.  

\medskip
\begin{remark}~\label{misp1}
In the family of quadratic polynomials, there is a notion of a {\em  full family} for a family of renormalizations (see~\cite{JBook}).  Roughly speaking,  this means that each renormalization is defined for an interval of parameters and these intervals nest as further renormalizations are made.  This is also so for renormalizations of the tangent family,  but because we don't make the affine conjugation, the  parameter intervals all have same right endpoint. 
\end{remark}

\section{The First Cycle Merging}\label{merging1}

In the quadratic family, as the polynomials pass through the center of a hyperbolic component where the critical point belongs to the attracting cycle and the multiplier is zero, the attracting cycle persists.   By contrast, in the tangent family $T_{t}$, as the parameter $t$ passes through the parameter $\beta_{1}$ described in Lemma~\ref{beta1}  where the limit multiplier is zero and the limit function has two virtual cycles of period $4$, we will see that  the two period $4$ attracting cycles merge into one period $8$ attracting cycle.   This is the first of a sequence of ``cycle merging phenomena'' that occur for this family.  

\subsection{Virtual cycles and  virtual centers}
In \S\ref{sec:period splitting} we saw that at $t=\pi/2$ the asymptotic values could be thought of as part of a virtual cycle.  Here we give the general definition of virtual cycles and virtual cycle parameters of arbitrary periods.

In this paper we always take $N$ to be of the form $2^n$ but the definitions make sense for any $N$.  

  \medskip
 \begin{definition}~\label{vcp}  
  If, for some $N \ge 2$ and $k \in \mathbb Z$, $T^{N-2}_t(t)= k\pi +\pi/2$,  then $t$ is called a {\em  virtual cycle parameter}.   If we want to emphasize the value of $N$,   we call it a {\em virtual cycle parameter  of period $N$}. For such a $t$, we call the  orbits of the asymptotic values    {\em virtual cycles}.
 \end{definition} 

Suppose $t$ is a virtual cycle parameter of period $N$.   
 Set $p_{2,t}=t$ and for $i=2,\cdots, N-2$, let $p_{i+1,t}=T^{i}_{t} (p_{2,t})$.  To define $p_{1,t}$ so that  $T_t(p_{1,t})=p_{2,t}$, we must take limits. Since 
 \[ \lim_{y \to \pm \infty} T_{t} (iy)= \mp t \]
 we take $p_{1,t} = - i \infty$.   Next since 
 \[ \lim_{x \to \pm(k\pi+ \pi/2)} T_t(x)=T_t(k\pi+ \pi/2)^{\pm} = {\mp} i \infty, \]
 if we set  $p_{N,t}=(k\pi +\pi/2)^+$, then $p_{N+1,t}=-i \infty =p_{1,t}$, and we have a virtual cycle of period $N$ containing  the asymptotic value $t$.    The other asymptotic value lies in a symmetric virtual cycle of period $N$ and we denote this pair of virtual cycles by 
  $$
 C_{N, t} =\{ p_{1,t}=-i\infty, \; p_{2,t}=t, \; \cdots,\; p_{N-1, t}, \; p_{N,t} =(k\pi+\pi/2)^{+}\}
 $$
 and 
 $$
 C_{N,t}' = \{ p_{1,t}'=i\infty,  \; p_{2,t}'=-t,  \; \cdots, \; p_{N-1,t}'=-p_{N-1,t}, \; \cdots, p_{N,t}'= (-k\pi-\pi/2)^{-}\}. 
 $$
 
If, however, we set   $p_{N,t}=(k\pi +\pi/2)^-$, we get $p_{N+1,t}=i \infty =-p_{1,t}$. Now, as a limit  $p_{N+2,t} = -t$, and  setting  $p_{N+1+i,t}=T^{i}_{t} (p_{N+1,t})$ for $i=2,\ldots, N-1$, we obtain a single virtual cycle of period $2N$ that contains the orbits of both asymptotic values.  We denote  this by 
$$
 C_{2N, t} =\{ p_{1,t}=-i\infty, \; p_{2,t}=t, \; \cdots,\; p_{N-2, t}, \; p_{N,t} =(k\pi+\pi/2)^{-}, \;  p_{N+1,t}'=i\infty,  \; 
 $$
 $$
 p_{N+2,t}'=-t,  \; \cdots, \; p_{2N-1,t},\; p_{2N,t}= (-k\pi-\pi/2)^{+}\}. 
 $$

\medskip

\begin{remark}\label{mergemult}
The multiplier of $C_{N,t}$ is given by the formula 
 \[ 
\lambda_{N,t}(C_{N,t})=(T_t^{N})'(p_{i,t})=(it)^{N}\prod_{k=1}^{N}\sec^2(p_{k,t}) =0.  
\] 
Note that for $z \in {\mathbb C}$, $\sec(z) \neq 0$.  

If we set 
\[
T_{t}' (\pm i \infty) =\lim_{y \to \pm \infty} (it) \sec^2(iy) =0,
\]   
 we can define the multiplier of the virtual cycle as a limit, 
 $ \lambda_{N,t}(C_{N,t})=0$.    Similarly, as limits,  we have  
 \[  \lambda_{N,t}(C_{2N,t})= \lambda_{N,t}(C_{N,t} )\lambda_{N,t}(C_{N,t}')=0.  
\]
\end{remark}
 
\begin{remark}
Virtual cycle parameters and virtual centers were first introduced in~\cite{K,KK1,KK2};  it was proved there that  for the family $T_t$,  every hyperbolic component has a virtual center and every virtual cycle parameter is the virtual center of two distinct hyperbolic components  tangent at the virtual  center.  This justifies calling the virtual cycle parameter a virtual center.  See~\cite{CK,FK,KK1} for  more general discussions of virtual cycle parameters and virtual centers. 
\end{remark} 

\medskip
\subsection{The first cycle merging}
Now we apply the discussion above to the period $4$ cycles in (\ref{two4}) and (\ref{two4'}). 
 For $t\in (\alpha_1, \beta_{1})$, as $t$ approaches $\beta_{1}$,  the  cycles $C_{4,t}$ and $C_{4,t}'$ approach 
cycles $C_{4,\beta_{1}}$ and $C_{4,\beta_{1}}'$ as follows:
\[ 
\lim_{t \to \beta_{1}^-} T_t(p_{4,4,t}) =-i\infty \mbox{   and  } \lim_{t \to \beta_{1}^-} T_t(p_{4,3,t}) =(\pi/2)^{+} \]
 and 
 \[ \lim_{t \to \beta_{1}^-} T_t(p_{4,4,t}') = +i\infty \mbox{   and  } \lim_{t \to \beta_{1}^-} T_t(p_{4,3,t}') = (-\pi/2)^{-}. \] 
 Thus in the limit  we have two symmetric virtual  cycles of period $4$, 
$$
C_{4,\beta_{1}}=\{ -i\infty, p_{4,2,\beta_{1}}, p_{4,3,\beta_{1}}, \frac{\pi}{2}\}
$$
and
$$
C_{4,\beta_{1}}'=\{ i\infty, p_{4,2,\beta_{1}}', p_{4,3,\beta_{1}}', -\frac{\pi}{2}\},
$$
whose multipliers are $0$. 

By Corollary~\ref{uniqueness}  and Theorem~\ref{transversality},   there is a unique  solution $\beta_{1}$  of $c_2(f_t)=\pi/2$ in $(\alpha_1,\pi)$ and  $c_{2}'(f_t)|_{\beta_1} \neq 0$.
Thus 
\[ \lim_{t \to \beta_{1}^{+}} T_t(p_{4,4,t}) =+i\infty \mbox{  and  } \lim_{t \to \beta_{1}^{+}} T_t(p_{4,3,t}) =(\pi/2)^{-} \]
 and 
 \[ \lim_{t \to \beta_{1}^+} T_t(p_{4,4,t}') = -i\infty \mbox{  and  } \lim_{t \to \beta_{1}^+} T_t(p_{4,3,t}') = (-\pi/2)^{+}. \] 
 
 We see that the continuations of the period $4$ virtual cycles  $C_{4,1,\beta_{1}^{-}}$ and $C_{4,2,\beta_{1}^{-}}$ exist as we approach $\beta_{1}$ from below and we denote them by $C_{4,1,\beta_{1}^{-}}$ and  $C_{4,2,\beta_{1}^{-}}$. Since $c_{2}'(f_t)|_{\beta_1} \neq 0$, when we take the limit as $t$ approaches  $\beta_{1}$ from above,  the same set of points are part of a single period $8$ cycle whose multiplier tends to $0$,
 $$
C_{8,\beta_{1}^{+}}=\{ p_{8,1,\beta_{1}^{+}}=-i\infty,\;  p_{8,2,\beta_{1}^{+}}=p_{4,2,\beta_{1}^{-}}=\beta_{1}, \; 
$$
$$
p_{8,3,\beta_{1}^{+}}=p_{4,3,\beta_{1}^{-}}, \; p_{8,4, \beta_{1}^{+}}= (\pi/2)^{-},
p_{8,5,\beta_{1}^{+}} =i\infty, \; 
$$
$$
p_{8,6,\beta_{1}^{+}}= -\beta_{1},\;  p_{8,7,\beta_{1}^{+}}= p_{4,3,\beta_{1}^{-}}', \; p_{8,8, \beta_{1}^{+}}= (-\pi/2)^{+} \}.
$$
(See Figure~\ref{fig6}). 

 The  cycle $C_{8,\beta_{1}^{+}}$ persists as $t$ increases beyond $\beta_{1}$. 
Thus by Proposition~\ref{monomult} we have

\medskip
 \begin{lemma}\label{1,8case}
  There exists  $\alpha_{2}$ in $(\beta_{1}, \pi)$ such that for $t \in (\beta_{1},\alpha_{2})$, 
  $0$ is a repelling fixed point and $C_{2,1,t}$ and $C_{2,2,t}$ are repelling period 2 cycles of $T_t$. 
In addition, $T_{t}$ has one attracting period $8$ cycle:
$$
C_{8,t}=\{ p_{8,1,t} , \; p_{8,2,t}, \; p_{8,3,t}, \; p_{8,4,t}, 
p_{8,5,t}, \; p_{8,6,t}, \; p_{8,7,t}, \; p_{8,8,t}\},
$$
where 
$$
-\pi< p_{8, 6,t}<-\pi/2<p_{8,8,t} <0 <p_{8, 4,t}<\pi/2<p_{8,2,t} <\pi
$$
and 
$$
-i\infty < p_{8,1,t} < p_{8,3,t} <0< p_{8,7,t} < p_{8,5,t}<i\infty.
$$ 
  The map $T_{t}$ has no other attracting or parabolic cycles.
\end{lemma}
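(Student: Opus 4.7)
The plan is to use the first renormalization $\Rt = f_t^2|_{I_{1,t}}$ from Definition~\ref{1ren} to reduce the problem to finding a symmetric period-$2$ cycle of $\Rt$, which corresponds to a single period-$8$ cycle of $T_t$ merging the orbits of both asymptotic values.  By Theorem~\ref{transversality}, $c_2(f_t) < \pi/2$ for $t$ slightly past $\beta_1$, so $\Rt$ is a well-defined odd tangent-like map on $I_{1,t}$ whose interior poles at $\pm \pi/2$ have asymptotic values $\pm c_2(f_t)$.  A period-$2$ cycle of $\Rt$ compatible with the $z \mapsto -z$ symmetry has the form $\{q_{+}, -q_{+}\}$ with $q_{+} \in I_{1,t}^{+}$ and $\Rt(q_{+}) = -q_{+}$, and such a cycle is exactly a period-$8$ cycle of $T_t$.

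To locate $q_{+}$ I would apply the intermediate value theorem to $\phi_t(x) = \Rt(x) + x$ on the two pieces of $I_{1,t}^{+}$ cut by the pole $\pi/2$.  On $[a_{1,t}, \pi/2]$, $\Rt$ is continuous and strictly increasing from $-t$ (at $a_{1,t}^{+}$) to $-c_2(f_t)$ (at $\pi/2^{-}$), so $\phi_t$ moves from $a_{1,t} - t < 0$ to $\pi/2 - c_2(f_t) > 0$, yielding a unique zero $q_{+} \in (a_{1,t}, \pi/2)$ since $\phi_t' = \Rt' + 1 > 0$.  On $[\pi/2, b_{1,t}]$, $\Rt \ge c_2(f_t) > 0$, so $\phi_t > 0$ and no other zero exists.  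Setting $r_{+} = f_t(q_{+})$, the monotonicity of $f_t$ on $(0, \pi/2)$ places $r_{+} \in (-t, -\pi/2)$; the full period-$4$ cycle of $f_t$ is $\{q_{+}, r_{+}, -q_{+}, -r_{+}\}$, and taking $T_t$-images places $T_t(\pm q_{+})$ and $T_t(\pm r_{+})$ in $\Im^{-} \cup \Im^{+}$ according to the signs of $\tan$ on $(0, \pi/2)$ and $(\pi/2, \pi)$.  Identifying these eight points with the $p_{8,i,t}$ in cyclic order gives precisely the structural description in the statement.

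The multiplier of $C_{8,t}$ is
\[
\lambda_{8,t} = (T_t^{8})'(q_{+}) = \Rt'(-q_{+})\,\Rt'(q_{+}) = \bigl(\Rt'(q_{+})\bigr)^2 \geq 0
\]
by the oddness of $\Rt$.  As $t \to \beta_1^{+}$ the IVT zero satisfies $q_{+} \to \pi/2^{-}$, and the flat critical-point estimate (\ref{approxderiv}) shows $f_t' \to 0$ at the pole $\pi/2$, so $\Rt'(q_{+}) \to 0$ and $\lambda_{8,t} \to 0$.  Thus $C_{8,t}$ is attracting just past $\beta_1$.  Proposition~\ref{monomult} then asserts that the hyperbolic component containing these parameters meets the real axis in an interval on which the multiplier is monotonic; since it starts at $0$, it must increase to $1$ at a parameter $\alpha_2 \in (\beta_1, \pi)$, and $(\beta_1, \alpha_2)$ is the sought interval.

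To rule out further attracting or parabolic cycles, observe that $T_t^2(t) = c_2(f_t) \in (0, \pi/2)$ lies close to $q_{+} = p_{8,4,t}$ for $t$ near $\beta_1$, so it sits in the basin of $q_{+}$ under $\Rt$ by continuity; hence $t$ is attracted to $C_{8,t}$, and by symmetry so is $-t$.  The general principle recalled in \S\ref{basic} (every attracting or parabolic cycle captures an asymptotic value) then forbids any other attracting or parabolic cycle.  Since $|T_t'(0)| = t > 1$ the fixed point $0$ is repelling, and $C_{2,t}, C_{2,t}'$—which persist as unique fixed points of $f_t$ in $(\pi/2, \pi)$ and $(-\pi, -\pi/2)$ by IVT and the monotonicity of $f_t - x$—must be repelling since they are neither attracting nor parabolic.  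The main obstacle lies in handling the transition across $\beta_1$: the virtual cycle $C_{8, \beta_1^{+}}$ contains $\pm i \infty$, so a direct continuity argument on $\mathbb{C}$ is awkward.  The renormalization circumvents this by converting the sought period-$8$ cycle on the full plane into the fixed-point equation $\Rt(x) + x = 0$ on the bounded interval $I_{1,t}^{+}$, where IVT and the flat-critical-point behavior at $\pi/2$ yield both existence and the limiting multiplier directly.
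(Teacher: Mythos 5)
Your existence argument is a genuinely different route from the paper's. The paper establishes the merged period-$8$ cycle by following the two period-$4$ virtual cycles across $t=\beta_1$ and invoking transversality of $c_2$ to show the limits from above glue into a single period-$16$ virtual cycle of $f_t$ (period $8$ of $T_t$) with multiplier $0$, then continuing it by Proposition~\ref{monomult}. You instead construct the cycle from scratch for $t>\beta_1$ by solving $\R_t(x)+x=0$ on $I_{1,t}^{+}$: the endpoint values $\R_t(a_{1,t}^{+})=-t$ and $\R_t(\pi/2^{-})=-c_2(f_t)$ give the sign change precisely when $c_2(f_t)<\pi/2$, and the flat critical point at $\pi/2$ yields the limiting multiplier $0$. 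This is a clean and self-contained IVT argument; what it buys is that you do not need the ``merging'' bookkeeping at all, only the single inequality $c_2(f_t)<\pi/2$ supplied by transversality. What the paper's route buys is that it explains \emph{why} the phenomenon is a merging of two cycles, which is thematically central to the paper, but as a pure existence proof your version is shorter.

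The one place you should tighten the argument is the step ruling out other attracting or parabolic cycles. You write that $c_2(f_t)$ ``lies close to $q_{+}$ for $t$ near $\beta_1$, so it sits in the basin of $q_{+}$ under $\R_t$ by continuity.'' Proximity to an attracting periodic point does not by itself put a point in its immediate basin: both $c_2(f_t)$ and $q_{+}$ tend to $\pi/2$ as $t\to\beta_1^{+}$, and a priori they could approach at different rates while the width of the immediate attracting interval around $q_{+}$ shrinks as well, so ``close'' is not a substitute for a containment estimate. You do not actually need this claim. Once you know $C_{8,t}$ is an attracting cycle (multiplier near $0$), the fact recalled in \S\ref{basic} that the immediate basin of \emph{every} attracting or parabolic cycle contains an asymptotic value applies directly to $C_{8,t}$; and since $C_{8,t}=-C_{8,t}$ by construction ($q_{+}$ and $-q_{+}$ are both in it), the symmetry $T_t(-z)=-T_t(z)$ forces $C_{8,t}$ to capture both $t$ and $-t$. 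This exhausts the asymptotic values, so there is no other attracting or parabolic cycle, and in particular $C_{2,t}$, $C_{2,t}'$ must remain repelling. Replacing your basin-by-continuity sentence with this two-line symmetry argument closes the only real gap; the rest of the proposal is sound.
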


\begin{figure}[ht]
\centering
    \includegraphics[width=3in]{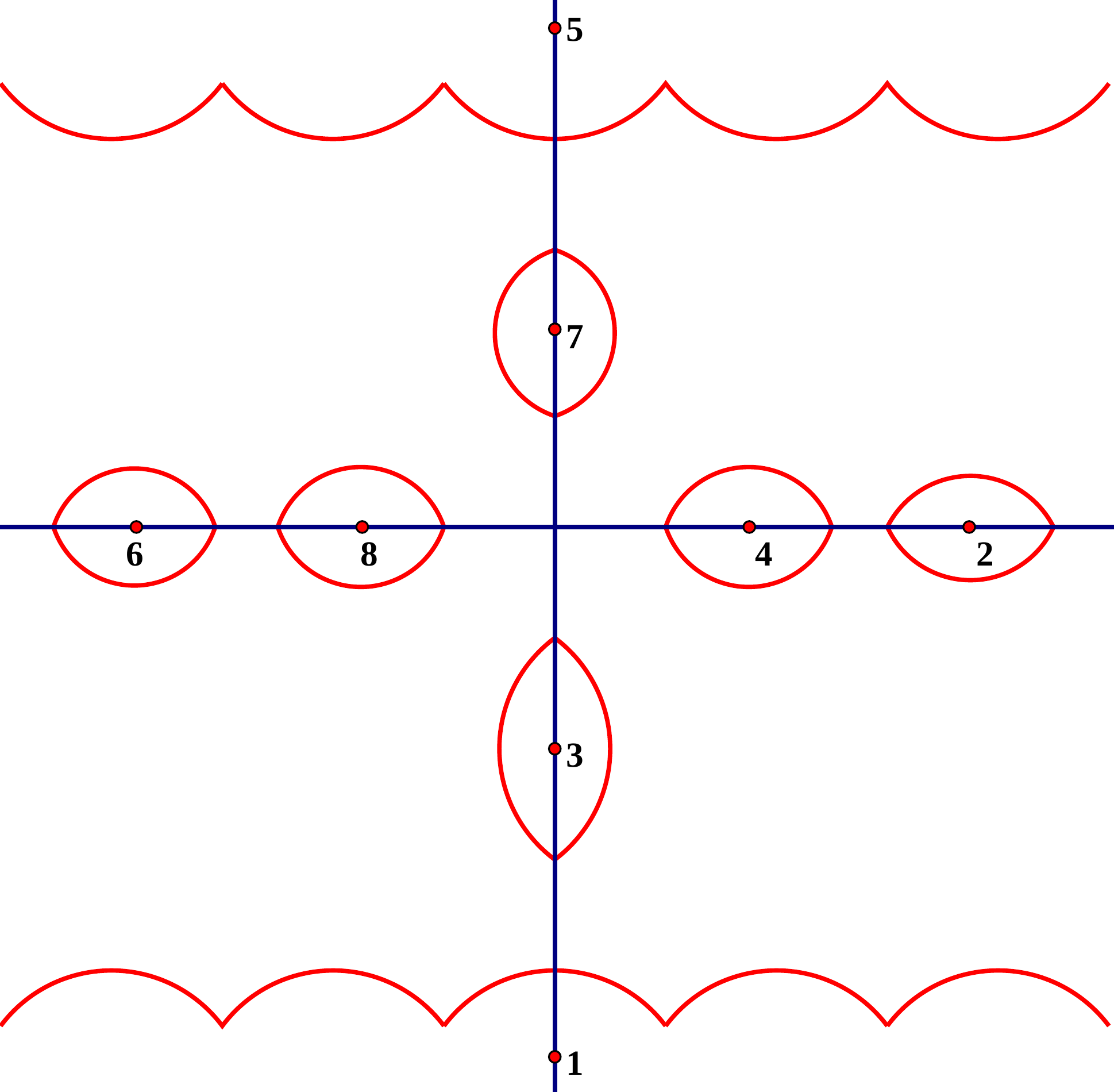}
  \caption{Period merging at $\beta_{1}$ from two period $4$ cycles to one period $8$ cycle.}~\label{fig6}
\end{figure}

 It follows from  Remark~\ref{mergemult}, that  the multipliers of the cycles $C_{4,t}$ and $C_{4,t}'$ tend to $0$ as $t$ tends to $\beta_{1}$  from either side.  The multiplier of $C_{8,t}$ is the product of the formulas for the derivatives at the points in $C_{4,t}$ and $C_{4,t}'$ so it has a limit of $0$ at $\beta_{1}$.  This implies  that $C_{8,t}$ is an attracting cycle attracting both asymptotic values  
and  $T_{t}$ can have no 
other attracting or parabolic cycles. Beyond $\beta_{1}$ the multiplier is monotone strictly increasing so 
this attracting property persists until $t$ reaches some $\alpha_2$ where $\lambda_{8,\alpha_2}=1$.  
This lemma gives us the existence of $\alpha_{2}$. The uniqueness of $\alpha_{2}$ follows from 
Corollary~\ref{uniquealpha}.

\section{The First Cycle Doubling}\label{standard doubling}\label{std doubling}

 In this section we will prove that the single period $8$ parabolic cycle for $t=\alpha_2$ ``doubles'' into two period $8$ attracting cycles as $t$ increases past $\alpha_{2}$. This phenomenon is somewhat different from the period doubling we observed for $t=\alpha_1$ because the multiplier of the parabolic cycle in that case was $-1$ and in this case it is $+1$.  It therefore needs to be described differently and hence we give it a different name, {\em  the cycle doubling phenomenon.}  We will see that for $ t \in (\beta_{1}, t_{\infty})$, where $t_{\infty}< \pi$ is to be defined, there is no more period doubling, but there is a sequence of  ``cycle doublings'' that starts with $\alpha_2$.  

The following lemma says that when $t>\beta_{1}$, the multiplier of any parabolic cycle is $+1$ and therefore, $T_{t}$ does not undergo a standard period doubling. 

\medskip
\begin{lemma}~\label{pos}
If $t>1$  the multiplier of any period $4n$ attracting or parabolic cycle of $T_t$ is positive.
\end{lemma}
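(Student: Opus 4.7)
The plan is to compute the multiplier directly via the chain rule and the explicit form of $T_t'$, and to use Lemma~\ref{allevenper} to locate the cycle points. Since the derivative is
$$T_t'(z) = it\sec^2(z),$$
the multiplier of any period $4n$ cycle $C=\{z_1,\ldots,z_{4n}\}$ factors as
$$\lambda_C = \prod_{k=1}^{4n} T_t'(z_k) = (it)^{4n}\prod_{k=1}^{4n}\sec^2(z_k) = t^{4n}\prod_{k=1}^{4n}\sec^2(z_k),$$
because $i^{4n}=1$. The prefactor $t^{4n}$ is already a positive real number, so the task reduces to showing that the product of $\sec^2(z_k)$'s is positive real.

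For this I would invoke Lemma~\ref{allevenper}: the hypothesis $t>1$ guarantees that any attracting or parabolic cycle of $T_t$ has all its points in $\mathbb{R}\cup i\mathbb{R}$, and of course none of the $z_k$ can be a pole of $T_t$ since $C$ is attracting or parabolic. For $z_k = x_k\in\mathbb{R}$, $\sec^2(x_k) = 1/\cos^2(x_k)$ is positive real, and for $z_k=iy_k\in i\mathbb{R}$, $\sec^2(iy_k)=1/\cos^2(iy_k)=1/\cosh^2(y_k)$ is also positive real. Therefore $\prod_k \sec^2(z_k)>0$ and $\lambda_C>0$.

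There is really no obstacle beyond bookkeeping here: once Lemma~\ref{allevenper} confines the orbit to the axes, on which $\sec^2$ takes positive real values, the factor $i^{4n}=1$ kills the only possible source of a sign or a complex phase. The hypothesis $t>1$ enters solely to justify the use of Lemma~\ref{allevenper}; the restriction to periods that are multiples of $4$ (rather than merely even) enters solely to make $i^{p}=1$ rather than $-1$, which is exactly what distinguishes the present situation from the $\alpha_1$ case where the parabolic multiplier was $-1$.
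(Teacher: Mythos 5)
Your proof is correct and follows essentially the same route as the paper: compute the multiplier via the chain rule, pull out $(it)^{4n}=t^{4n}>0$, and observe that the location of the cycle on $\mathbb R\cup i\mathbb R$ makes each remaining $\sec^2$ factor a positive real (on the imaginary axis $\sec^2(iy)=\sech^2(y)$, which is the form the paper writes). The only cosmetic difference is that the paper re-derives the fact that the cycle lies on the axes inside the proof rather than citing the argument from Lemma~\ref{allevenper}; your appeal to that lemma technically points at content established in its proof rather than its statement (which only asserts evenness of the period), but the underlying fact is the same and correctly used.
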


\begin{proof} Since $T_t$ maps the real line to the imaginary line and vice-versa, and since the two asymptotic values are real, it follows  that any attracting or parabolic cycle is contained in the union of the real  and   imaginary axes. Suppose the cycle is $p_1, p_2, \cdots, p_{4n}$. Without of loss generality, 
we may assume $p_1$ is real. Then $p_{2k-1}$  is  real and $p_{2k}$ is pure imaginary for all $k=1,\cdots, 2n$. 
 Now the multiplier of the cycle is  
\[ (
T_t^{4n})'(p_1)=(it)^{4n}\prod_{k=1}^{2n}\sec^2(p_{2k-1})\sech^2(i p_{2k}) >0.  
\]
\end{proof}

The next lemma is important for our proof of Part d) in Theorem~\ref{bif}.

\medskip
\begin{lemma}~\label{inftyfs}
Suppose $t>1$ and suppose $T_{t}$ has an attracting or parabolic cycle. 
Then there exists an $r_t >0$ such that the intervals $(-i\infty, -r_t i)$ and $(r_t i, i\infty)$  are in the intersection of the immediate basin of this cycle with $\Im-$ and $\Im^+$,  respectively.  
\end{lemma}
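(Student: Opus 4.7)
By the odd symmetry $T_t(-z)=-T_t(z)$, it suffices to produce $r_t>0$ with $R^+=\{iy:y>r_t\}$ contained in the immediate basin of the given cycle; the ray on $\Im^-$ then lies in the immediate basin of the symmetric cycle $-C$, which equals $C$ when $C$ is self-symmetric and is handled in parallel otherwise. The starting point is the Fatou--Sullivan fact recalled in Section~\ref{basic} (see~\cite{K,KK1,KK2}): the immediate basin of any attracting or parabolic cycle of $T_t$ contains at least one asymptotic value. Let $U_0$ be the Fatou component of the immediate basin that contains $-t$.

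Since $U_0$ is open, fix $\delta>0$ with $D(-t,\delta)\subset U_0$, and by (\ref{av}) choose $r_t>0$ so that $T_t(iy)\in D(-t,\delta)\subset U_0$ for every $y>r_t$. Each such $iy$ is then attracted to the cycle, so by Montel's theorem $R^+$ lies in the Fatou set within a single Fatou component $V$; by continuity and connectedness, $T_t(V)\subset U_0$.

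The heart of the proof---and the main obstacle---is to identify $V$ with the immediate basin component $U_{p-1}$ that pre-images $U_0$ in the cycle $U_0\to U_1\to\cdots\to U_{p-1}\to U_0$, as opposed to some strictly pre-periodic Fatou component of the full basin. The plan is to exploit the logarithmic singularity of $T_t$ at $+i\infty$. The component $\Omega_\delta$ of $T_t^{-1}(D(-t,\delta))$ containing $R^+$ is simply connected, and $T_t|_{\Omega_\delta}\colon \Omega_\delta\to D(-t,\delta)\setminus\{-t\}$ is a universal covering (since $-t$ is omitted, by property~(b)). On the other hand, because $T_t$ has no critical points (property~(a)), because $-t$ is omitted, and because $U_{p-1}$ is simply connected (\cite{DK}, as invoked in the proof of Lemma~\ref{interval}), the restriction $T_t|_{U_{p-1}}\colon U_{p-1}\to U_0\setminus\{-t\}$ is likewise a universal covering map. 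Since the inclusion $D(-t,\delta)\setminus\{-t\}\hookrightarrow U_0\setminus\{-t\}$ induces an isomorphism on fundamental groups, its lift inside $U_{p-1}$ has exactly one connected component, which is itself a universal cover of $D(-t,\delta)\setminus\{-t\}$ realizing the cusp end at the puncture $-t$. This cusp end must coincide with $\Omega_\delta$, the unique such end embedded in $\mathbb{C}$ near $+i\infty$. Hence $\Omega_\delta\subset U_{p-1}$, and therefore $V=U_{p-1}$.

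Consequently $R^+\subset \Omega_\delta\subset U_{p-1}$ lies in the immediate basin, and the ray on $\Im^-$ follows by the odd symmetry applied to the asymptotic value $+t$ at $-i\infty$.
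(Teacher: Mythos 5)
Your proof is correct, but it takes a genuinely different and more sophisticated route than the paper's. The paper argues directly on the real and imaginary axes: since $D\cap\RR$ is an interval (Lemma~\ref{interval}) containing the asymptotic value $t$, and since the cycle point $x$ in $\overline D\cap\RR$ is the image of an imaginary point and therefore lies in $(0,t)$, the preimage of $[x,t)$ under the homeomorphism $T_t\colon\Im^-\to(0,t)$ is an unbounded ray that contains (or has as a boundary point) a point of the cycle, which immediately places that ray in the immediate basin. No covering theory is needed.

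You instead invoke the logarithmic tract structure: $T_t\colon\CC\to\widehat\CC\setminus\{\pm t\}$ is the universal covering, so $T_t^{-1}\big(D(-t,\delta)\setminus\{-t\}\big)$ is a \emph{single} connected tract $\Omega_\delta$ near $+i\infty$; the covering $T_t\colon U_{p-1}\to U_0\setminus\{-t\}$ has exactly one lift of $D(-t,\delta)\setminus\{-t\}$, and since that lift is necessarily a piece of $\Omega_\delta$, the whole connected set $\Omega_\delta$ lies in the Fatou component $U_{p-1}$. This buys you a cleaner identification of ``basin vs.\ preperiodic component'' that does not rely on locating a cycle point on the ray, and would survive deformation to complex parameters; the cost is the covering-space apparatus. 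One step you should make explicit: the decisive fact forcing the lift inside $U_{p-1}$ to coincide with $\Omega_\delta$ is that $T_t^{-1}\big(D(-t,\delta)\setminus\{-t\}\big)$ is \emph{connected} (the loop about $-t$ generates $\pi_1(\widehat\CC\setminus\{\pm t\})$ and $T_t$ is the universal cover); ``the unique such end near $+i\infty$'' as written leaves open why the cusp end of $U_{p-1}$ could not sit over some other part of the plane, whereas connectedness of the full preimage closes that gap at once. Also, Montel's theorem is not needed in your second paragraph: $R^+$ lies in the Fatou set simply by complete invariance.
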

 
\begin{proof}  Since $t>1$, $0$ is a repelling fixed point.   If $T_{t}$ has an attracting or parabolic cycle,  then either it is a symmetric cycle and both its asymptotic values $\pm t$  lie the immediate basin of the cycle or there are two symmetric cycles and one asymptotic value is in the immediate basin of each.   Assume for arguments sake there are two symmetric cycles; the argument in the other case will be clear.   Since $t$ is in the immediate basin, by Lemma~\ref{interval}, there is $s>0$ such that the interval $(s,t)$ is in the immediate basin of the cycle and the periodic point of the cycle is either inside the interval or is the point $s$.    Now 
the preimage of $[s, t)$ under $T_{t}$ is  $(-i\infty, -r_t i]\subset \Im^{-}$ for some $r_t>0$ and it contains a point of the cycle.   The same argument   says that the interval 
  $(-t, -s]$ and its preimage $(r_t i, i\infty)\subset \Im^{+}$ under $T_{t}$ are both in the immediate basin of the symmetric cycle.   Therefore, all four of  these intervals belong to the intersection of the immediate basin of the cycle with ${\mathbb R}^{-}$, $\Im^{-}$, ${\mathbb R}^{+}$, and $\Im^{+}$. 
   \end{proof}

We also need the following general result from complex dynamics about  parabolic cycles.  The proof uses standard techniques so we defer it to  the Appendix. 
\medskip
\begin{lemma}~\label{parabolicbur}
Suppose $f(z)=z+a_nz^n+o(z^n)$ is an analytic function defined on some neighborhood  of $0 \in \mathbb C$. 
\begin{enumerate}
\item   Suppose  $\lambda$ lies  inside a small disk, inside and tangent to the unit circle at the point $1$.   
Then $g_{\lambda} (z)= \lambda f(z)$ has one attracting fixed point $0$ and $(n-1)$ repelling fixed points  counted with multiplicity,  in a small neighborhood of $0$.
\item  Suppose  $\lambda$ lies  inside a small disk, outside and tangent to the unit circle at the point $1$. Then $g_{\lambda}(z)=\lambda f(z)$ 
has one repelling fixed point $0$ and $(n-1)$ attracting fixed points counted with multiplicity, in a small neighborhood of $0$.
\end{enumerate}
\end{lemma}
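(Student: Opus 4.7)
The plan is to count the fixed points of $g_\lambda$ near $0$ via Rouché's theorem and then to compute their multipliers to leading order in $\lambda - 1$; a geometric analysis of the affine map $\lambda \mapsto n - (n-1)\lambda$ then determines whether those fixed points are attracting or repelling. Write $F(z,\lambda) = g_\lambda(z) - z = (\lambda - 1)z + \lambda a_n z^n + \lambda \cdot o(z^n)$. At $\lambda = 1$ this equals $a_n z^n + o(z^n)$, with a zero of order exactly $n$ at the origin. Choose $\rho > 0$ small enough that $|F(z,1)| \geq c\rho^n > 0$ on $|z| = \rho$; then Rouché's theorem applied to $F(\cdot,\lambda)$ versus $F(\cdot,1)$ gives exactly $n$ zeros of $F(\cdot,\lambda)$ in $\{|z| < \rho\}$, counted with multiplicity, for all $\lambda$ sufficiently close to $1$. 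Since $f(0) = 0$ forces $z = 0$ to be one of them, the remaining $n-1$ are the nonzero fixed points of $g_\lambda$. For $\lambda \neq 1$ these satisfy $(\lambda - 1) + \lambda a_n z^{n-1} + o(z^{n-1}) = 0$, hence each such fixed point $z_*$ obeys $z_*^{n-1} = \tfrac{1-\lambda}{a_n}(1 + o(1))$, and for $\lambda$ near but not equal to $1$ the $n-1$ points are distinct of modulus $\sim |1-\lambda|^{1/(n-1)}$.

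To get the multiplier, differentiate: $g_\lambda'(z) = \lambda(1 + n a_n z^{n-1} + o(z^{n-1}))$. Substituting $\lambda a_n z_*^{n-1} = (1 - \lambda)(1 + o(1))$ at a nonzero fixed point yields
\[
g_\lambda'(z_*) = \lambda + n(1 - \lambda)(1 + o(1)) = \mu(\lambda) + o(\lambda - 1), \qquad \mu(\lambda) := n - (n-1)\lambda,
\]
while the multiplier at $0$ is $\lambda$ itself. The affine map $\mu$ fixes $\lambda = 1$ with derivative $-(n-1)$, so it sends the unit circle onto the circle $\{|\mu - \tfrac{n}{n-1}| = \tfrac{1}{n-1}\}$, which is tangent to the unit circle at $1$ from the outside. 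Because $\mu'(1)$ is a negative real number, $\mu$ carries a small disk inside $\{|\lambda|<1\}$ tangent at $1$ onto a small disk outside $\{|\mu|<1\}$ tangent at $1$, and conversely carries a small disk outside $\{|\lambda|<1\}$ tangent at $1$ onto a small disk inside $\{|\mu|<1\}$ tangent at $1$; in either case $|\mu(\lambda)| - 1$ is of order $|\lambda - 1|$ with the appropriate sign.

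Combining, for $\lambda$ in a sufficiently small disk $D$ inside the unit circle tangent at $1$: $|\lambda| < 1$ makes $0$ attracting, while $|\mu(\lambda)| > 1$ with a margin of order $|\lambda - 1|$ that swamps the error $o(\lambda - 1)$ in $g_\lambda'(z_*)$, so $|g_\lambda'(z_*)| > 1$ at each of the $n-1$ nonzero fixed points, giving statement (1). Statement (2) follows symmetrically: for $\lambda$ in a small disk outside the unit circle tangent at $1$, $|\lambda| > 1$ makes $0$ repelling and $|g_\lambda'(z_*)| < 1$ makes the $n-1$ nonzero fixed points attracting. The main obstacle is purely quantitative bookkeeping: one needs $|g_\lambda'(z_*) - \mu(\lambda)| = o(|\lambda - 1|)$ uniformly as $\lambda \to 1$, which follows from uniform control of the Taylor remainders in $f$ on a fixed neighborhood of $0$ combined with $|z_*| = O(|1-\lambda|^{1/(n-1)})$, and this implicitly fixes how small the tangent disks $D$ must be chosen so that the error cannot move $|g_\lambda'(z_*)|$ across the unit circle.
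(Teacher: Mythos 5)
Your proof is correct and follows essentially the same route as the paper's: both reduce to the observation that the multiplier at the $n-1$ nonzero fixed points is (to leading order) the affine function $\mu(\lambda) = n-(n-1)\lambda$, and both then read off the attracting/repelling dichotomy from how this affine map exchanges disks tangent to the unit circle at $1$ from inside with disks tangent from outside. One small slip that is worth flagging but does not affect the argument: $\mu$ carries the unit circle onto $\{|\mu-n|=n-1\}$, not onto $\{|\mu-\tfrac{n}{n-1}|=\tfrac{1}{n-1}\}$ (the latter is the \emph{preimage} of the unit circle under $\mu$); your subsequent reasoning, however, uses only $\mu(1)=1$ and $\mu'(1)=-(n-1)<0$, which is all that is needed. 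Your version of tracking the $o(|\lambda-1|)$ error in the multiplier at $z_*$ is in fact a bit more scrupulous than the paper's appeal to ``by continuity, the corresponding multipliers are close,'' though the underlying computation is the same.
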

The next lemma provides the first instance of cycle doubling. 
\medskip
\begin{lemma}~\label{alpha2}
There exists  $\beta_{2}$  in  $(\alpha_{2}, \pi)$, 
such that for $t \in (\alpha_{2},\beta_{2})$,  
$0$ remains a repelling fixed point of $T_{t}$ and $C_{2,t}$, $C_{2,t}'$ remain  period $2$ repelling cycles of $T_{t}$,   the merged 
     period $8$ parabolic cycle $C_{8,t}$ at $\alpha_{2}$ becomes a repelling cycle and  a new pair of period $8$ attracting cycles are born.
\end{lemma}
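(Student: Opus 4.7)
The plan is to show that the local bifurcation of $f_t^4$ at the four real points of $C_{8,\alpha_2}$ is a supercritical pitchfork driven by the $z\mapsto -z$ symmetry of the family, and then to locate $\beta_2$ via Proposition~\ref{monomult} in essentially the same way as $\beta_1$ was located.

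I first fix notation: set $p_1=p_{8,2,t}$, $p_2=p_{8,4,t}$, $p_3=-p_1$, $p_4=-p_2$, all real fixed points of $f_t^4=T_t^8$. Write $a_i=f_t'(p_i)$, $b_i=f_t''(p_i)/2$ and $\mu(t)=a_1a_2$. Because $f_t$ is odd, $f_t'$ is even and $f_t''$ is odd, giving $a_3=a_1$, $a_4=a_2$, $b_3=-b_1$, $b_4=-b_2$, and the cycle multiplier equals $(f_t^4)'(p_1)=\mu^2$. Since $f_t'<0$ by formula~(\ref{deriv}), $\mu>0$; Proposition~\ref{monomult} combined with Lemma~\ref{1,8case} forces $\mu(\alpha_2)=1$.

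Next I carry out the normal-form calculation at $(t,z)=(\alpha_2,p_1)$. Running the chain-rule recursion for the Taylor coefficients of $f_t^4(p_1+u)-p_1=Au+Bu^2+Cu^3+O(u^4)$ and using the symmetry relations above yields
\[
A=\mu^2, \qquad B=\mu(1-\mu)\bigl(a_2b_1+a_1^2 b_2\bigr),
\]
so $B(\alpha_2)=0$. A parallel chain-rule computation for $\partial_tf_t^4(p_1)|_{\alpha_2}$, using that $\xi(z):=\partial_tf_t(z)|_{\alpha_2}$ is odd (differentiate $f_t(-z)=-f_t(z)$ in $t$), gives
\[
\partial_t\bigl(f_t^4(p_1)\bigr)\Big|_{\alpha_2}=(\mu-1)\bigl(\xi(p_2)+a_2\xi(p_1)\bigr)\Big|_{\alpha_2}=0,
\]
so the deviation of $p_1$ from being a fixed point of $f_t^4$ is second-order in $\tau:=t-\alpha_2$. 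Substituting the Schwarzian identity $c_i=b_i^2/a_i+a_iS(f_t)(p_i)/6$ (with $c_i=f_t'''(p_i)/6$) into the recursion for the cubic coefficient, and simplifying at $\mu=1$, collapses it to
\[
C(\alpha_2)=\tfrac{1}{3}\bigl(S(f_t)(p_1)+a_1^2\,S(f_t)(p_2)\bigr)<0,
\]
the strict inequality following from $S(f_t)<0$ for $t>1$. Writing $w=z-p_1$, the equation $f_t^4(z)-z=0$ near $(\alpha_2,p_1)$ therefore reduces at leading order to
\[
\lambda'(\alpha_2)\,\tau\, w + C(\alpha_2)\,w^3 = 0,
\]
which is a supercritical pitchfork: for $\tau<0$ the only real root is $w=0$; for $\tau>0$ there are three real roots $w=0$ and $w=\pm\sqrt{-\lambda'(\alpha_2)\tau/C(\alpha_2)}$. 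The multiplier at $w=0$ is $1+\lambda'(\alpha_2)\tau>1$, so the continuation $C_{8,t}$ is repelling, while the symmetric pair at $w=\pm v_0\tau^{1/2}$ has multiplier $1-2\lambda'(\alpha_2)\tau<1$, hence attracting.

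Applying the same pitchfork analysis at $p_2$, $p_3$, $p_4$ produces a total of eight new attracting fixed points of $f_t^4$. Each has $f_t$-orbit length dividing $4$, and a continuity and monotonicity argument (using that $f_t$ is strictly monotonic on each fundamental interval and therefore preserves the ordering within each new pair) shows $f_t$ bijectively sends the new pair near $p_i$ to the new pair near $p_{i+1}$; the symmetry $z\mapsto -z$ then partitions the eight points into two distinct period-$4$ orbits $D_t$ and $-D_t$ of $f_t$. Under $T_t$ these yield the two attracting period-$8$ cycles $C_{8,1,t}$ and $C_{8,2,t}=-C_{8,1,t}$ of the lemma; that $0$, $C_{2,t}$, $C_{2,t}'$ remain repelling is immediate from continuity of the multipliers. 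For the existence of $\beta_2\in(\alpha_2,\pi)$, I mimic the proof of Lemma~\ref{beta1}: Proposition~\ref{monomult} forces the common multiplier of $C_{8,1,t}$ and $C_{8,2,t}$ to be monotonic on the new hyperbolic component, decreasing from $1$ at $\alpha_2$, and the intermediate value theorem applied to a suitable virtual-image function, an iterate $t\mapsto f_t^k(t)$ of the asymptotic value, produces $\beta_2\in(\alpha_2,\pi)$ where one of the new attracting cycles first collides with a pre-pole of $f_t$. The main obstacle is the paired vanishings $B(\alpha_2)=0$ and $\partial_tF|_{(\alpha_2,p_1)}=0$ in the normal-form step: both are forced by the $z\mapsto -z$ symmetry, and together they upgrade what would otherwise be a saddle-node into a pitchfork, which is the structural reason ``cycle doubling'' produces a \emph{pair} of new attracting cycles rather than a single attracting-repelling pair. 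The strict sign $C(\alpha_2)<0$, which makes the pitchfork supercritical and places the new cycles on the $t>\alpha_2$ side, is where the negative-Schwarzian hypothesis enters essentially.
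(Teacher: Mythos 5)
Your proposal takes a genuinely different route from the paper. Where the paper shows the parabolic point $p_i\in C_{8,\alpha_2}$ has exactly two attracting petals by a topological argument (using Lemma~\ref{inftyfs} to produce two intervals landing in distinct Fatou components and the symmetry of the asymptotic-value orbits to cap the number of petals at two), you derive the same local normal form $u\mapsto u + Cu^3 + O(u^4)$, $C\ne 0$, by an explicit Taylor-coefficient computation. Your derivation of the key identities is correct: using $a_3=a_1$, $a_4=a_2$, $b_3=-b_1$, $b_4=-b_2$ (from oddness of $f_t$) the chain-rule recursion indeed gives $B=\mu(1-\mu)(a_2b_1+a_1^2b_2)$ and $\partial_t f_t^4(p_1)|_{\alpha_2}=(\mu-1)(\xi(p_2)+a_2\xi(p_1))$, both vanishing at $\mu=1$; and the Schwarzian cocycle identity $S(f^4)(p_1)=\sum_{i=0}^{3}S(f)(p_{i+1})\,[(f^i)'(p_1)]^2$ at $\mu=1$ with $A=1$, $B=0$ gives exactly $C(\alpha_2)=\tfrac13(S(f)(p_1)+a_1^2 S(f)(p_2))<0$, so your appeal to the negative Schwarzian is precise. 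This buys you an explicit sign for the paper's cubic coefficient $a_3$ and a concrete structural reason for the pitchfork, whereas the paper gets $a_3\ne 0$ only indirectly via the petal count; on the other hand, the paper's argument makes no explicit use of the Schwarzian here.

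Two caveats. First, the non-degeneracy $\lambda'(\alpha_2)\ne 0$ (equivalently $\partial^2_{tz}[f_t^4(z)-z]\ne 0$ at $(\alpha_2,p_1)$) is needed to make the pitchfork normal form legitimate, but Proposition~\ref{monomult} only gives strict monotonicity of $|\lambda|$ in the open hyperbolic component, not a nonzero one-sided derivative at its boundary. The paper faces the same issue and discharges it by invoking Lemma~\ref{parabolicbur} together with Corollary~\ref{uniquealpha} (which rests on the positive-transversality Theorem~\ref{transversality}, proved later); your proof should cite the same machinery or argue separately that the multiplier actually crosses $1$ rather than merely touching it. Second, your framing of the vanishing $B$ and $\partial_t F$ as a pitchfork ``driven by the $z\mapsto -z$ symmetry'' is slightly misleading: $z\mapsto -z$ does not fix $p_1$ (it sends $p_1$ to $p_3$), so this is not a textbook $\mathbb Z/2$-equivariant pitchfork at $p_1$. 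What you actually have is the conjugacy $\psi=-f_t^2$ of $f_t^4$ near $p_1$ with itself, fixing $p_1$ with $\psi'(p_1)=-\mu$ and $\psi^2=f_t^4$; at $\mu=1$ this is an honest square-root conjugacy, and it is this structure (equivalent to the paper's observation that $f_t^2$ preserves the petal pattern) that forces $B$ and $\partial_tF$ to vanish together. Your explicit coefficient computation is a valid way to capture this, but the phrase ``$z\mapsto -z$ symmetry'' should be read as this hidden half-iterate symmetry rather than a literal reflection about $p_1$.
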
 

\begin{proof}
We consider the period $8$ parabolic cycle $C_{8, \alpha_{2}}$.  
As the limit of cycles for $t$ in the interval $(\beta_{1}, \alpha_{2})$, 
it attracts both asymptotic values $\pm \alpha_{2}$.  Since its multiplier is $1$, 
we  cannot prove the lemma by  the standard period doubling argument that we in used in \S\ref{sec:period splitting}.  Instead,
we will show that there are exactly two attracting petals at each point in the cycle $C_{8, \alpha_{2}}$. Then we can apply Lemma~\ref{parabolicbur} and Corollary~\ref{uniquealpha} to complete the proof.
This is equivalent to showing that each point in the cycle is the common boundary point of  two distinct components in its  immediate basin of attraction.  By Lemma~\ref{inftyfs}, it will suffice to show there are a pair of intervals that belong to distinct components of the  immediate basin  meeting at each point in the cycle.   Recall that because the immediate basin must contain the forward orbit of at least one asymptotic value and that these orbits lie in the real and imaginary axes, there are at most two distinct components at each point. 

For readability, we drop the index $8$ in the notation for the periodic points of the cycle. Then, following our convention (see Figure~\ref{fig6}),  $p_{1, \alpha_{2}}$ denotes the lowest point on  $\Im^-$
 and $p_{5, \alpha_{2}}$ 
  the highest point on $\Im^+$  in the cycle $C_{8, \alpha_{2}}$.
Since $T_{\alpha_{2}} (\Im^{-})= (0,\alpha_{2})$,  we have $T_{\alpha_{2}}((-i\infty, p_{1, \alpha_{2}}))=(p_{2, \alpha_{2}},\alpha_{2})$.  
The interval $(p_{2, \alpha_{2}},\alpha_{2})$ is therefore contained  
  in the intersection of an attracting petal lying to the right of $p_{2, \alpha_{2}}$ with $\Re^+$.  Pulling this petal back to $p_{1, \alpha_{2}}$ by $T_t$, we have a petal containing 
$(-i\infty, p_{1, \alpha_{2}})$.
By symmetry we obtain an attracting petal at $p_{5, \alpha_{2}}$ containing $(p_{5, \alpha_{2}}, i\infty)$ and one whose intersection with $\Re^-$ is an interval  to the left of $p_{6, \alpha_{2}}$ containing  $(-\alpha_2,p_{6, \alpha_{2}})$.  

Since $f_{t}$ is a monotonic strictly decreasing piecewise continuous function on the real line,  $f_{t}^2$ is a  monotonic strictly increasing piecewise continuous function.  
 Therefore if we apply $f_t$ to an interval in its   region of continuity  an even number of times, the image has the same orientation as the original.   
This implies that when we apply $f_{t}^{2}=T_{t}^{4}$ to the petal lying to the right of $p_{2,\alpha_{2}}$, we get an attracting petal at $p_{6, \alpha_{2}}$ which contains an interval to its right in $\Re^-$.  Since $p_{6, \alpha_{2}}$ is a parabolic periodic point, it is not in the Fatou set, so the intervals on each side of it are in  distinct petals.  Since there are at most two attracting petals 
 at each point  of $C_{8, \alpha_{2}}$ there are exactly two.    
  We can thus use the  local coordinate at each point,  
\[
T_{t}^{8}(z)=z+a_{3} z^3+o(z^3), \quad  a_{3}\neq 0.
\]
and apply  Lemma~\ref{parabolicbur} and Corollary~\ref{uniquealpha} to deduce that   when $t>\alpha_{2}$, the  cycle $C_{8, t}$ is 
  repelling  and there are two distinct  
period $8$ attracting cycles $C_{8, t,1}$ and $C_{8, t,2}$ near this period $8$ repelling cycle.
By Proposition~\ref{monomult}, these attracting cycles persist through some interval $(\alpha_2,\beta_2)$ in which, as $t$ increases, their multipliers decrease from $1$ to $0$;  at  $\beta_{2}$ the multiplier is $0$.   As $t$ approaches $\beta_2$ from below 
 the point $p_{1,t}$ increases and  limits at $i\infty$ and the point  $p_{5, t}=-p_{1,t}$ decreases and limits 
at $-i\infty$. 
\end{proof} 
  We will discuss $\beta_2$ further in the next section.
  
\section{The Second Period Merging and Renormalization}\label{merging2}

The second period merging  phenomenon is somewhat different from the first one  so we show how it works in this section. 
In the previous section, we saw that as $t$ increases through $\alpha_{2}$,  two  new period $8$ attracting cycles form. These persist until $t$ reaches a value $\beta_{2}$ where they become virtual cycles.  
It follows from the discussion of renormalization and Theorem~\ref{transversality}  that  cycle merging occurs at  $\beta_{2}$.

 Let us recall some of the notation we used for the first renormalization.  
We started with the 
  full family  of tangent maps,  
  \[  \{ f_{t}: [-\pi, \pi] \to [-t, t]\}_{t=\pi/2}^{\pi} \] 
  and obtained the full family of tangent-like maps 
   \[  \{ {\mathcal R}_t=f^{2}_{t}: I_{1,t}\to [-t, t \tanh (t\tan t)]\cup [-t\tanh(t\tan t), t]\}_{t=\beta_{1}}^{\pi} \} \]  
  where 
    \[ I_{1,t} = [-b_{1,t}, -\pi/2]\cup[-\pi/2, -a_{1,t}] \cup [a_{1,t}, \pi/2]\cup[\pi/2, b_{1,t}]  \]
    and $\pm a_{1,t}, \pm b_{1,t}$ are pre-poles of $f_t$ of order 2 closest to $\pm\pi/2$.  
Then we introduced the notation 
 \[ c_{1} (f_t) = f_t( (\pi/2)^+)=t  \mbox{ and }  c_{2}(f_t) = - f_{t}^2 ((\pi/2)^+)= -t\tanh (t\tan t) >0. \]
 Note that $c_1(f_t)$ is the asymptotic value of $f_t$ and $c_1(\Rt)=\Rt(\pi/2^+)=c_2(f_t)$ is the asymptotic value of $\Rt$.

\subsection{The second renormalization}
For each $t \in (\beta_{1}, \pi)$,  $c_2(f_t)< \pi/2$.  This implies  there is a unique solution $b_{2,t}$ of ${\mathcal R}_t(z)=\pi/2$ in $[\pi/2, b_{1,t}]$ and another solution $-a_{2,t}$ in $[-\pi/2,-a_{1,t}]$.  Using the symmetry about $0$ we find solutions $-b_{2,t}$ and $a_{2,t}$ of   ${\mathcal R}_t(z) =-\pi/2$.  By periodicity we see that $a_{2,t}+b_{2,t}=\pi$.  We now apply Definition~\ref{1ren} to obtain the renormalization $\Rt^2$ of $\Rt$. 

The points $\pm a_{2,t}, \pm b_{2,t}$  are poles of $\Rt^2=f_t^4$ and are points of discontinuity;  they divide the intervals of $I_{1,t}$ into subintervals where $\Rt^2$ is continuous.  We consider those that have the pole $\pm \pi/2$ as an endpoint and, taking advantage of the symmetry, we label them    as follows:
 \[ I_{2,t} =[-b_{2,t}, -\pi/2]\cup [-\pi/2, -a_{2,t}] \cup [a_{2,t}, \pi/2]\cup [\pi/2, b_{2,t}]. \]
 
 Below, for the sake of readability we suppress the dependence on $t$ and relabel the sub-intervals  of $I_{1,t}$ and $I_{2,t}$ as
  \[ I_{1,t} = I_{11-}\cup I_{12-}\cup I_{12+}\cup I_{11+}. \] 
 \[ I_{2,t}  = I_{21-}\cup I_{22-}\cup I_{22+}\cup I_{21+}.\] 
   
 We now set $c_{2}(\Rt)=\Rt^2(\pi/2^+)=|f_t^4(\pi/2^+)|$.  It denotes the positive asymptotic value of $\Rt^2$.  Because $\Rt^2$ is monotonic strictly increasing, $c_{2}(\Rt)>c_{1}(\Rt)$. 
 With this notation, the second renormalization ${\mathcal R}^{2}_t$ (see Figure~\ref{fig7}) is:
$$
\Rt^2=f_{t}^{4}: I_{22-} \to  [c_{1}(\Rt),c_{2}(\Rt)]
$$ 
$$
\Rt^2=f_{t}^{4}:  I_{21-}  \to [-c_{2}(\Rt), -c_{1}(\Rt)],
$$ 
$$
\Rt^2=f_{t}^{4}: I_{21+} \to[c_{1}(\Rt),c_{2}(\Rt)],
$$ 
and 
$$
\Rt^2=f_{t}^{4}: I_{22+} \to [-c_{1}(\Rt),-c_{2}(\Rt)].
$$

 \begin{figure}[ht]
\centering
    \includegraphics[width=4in]{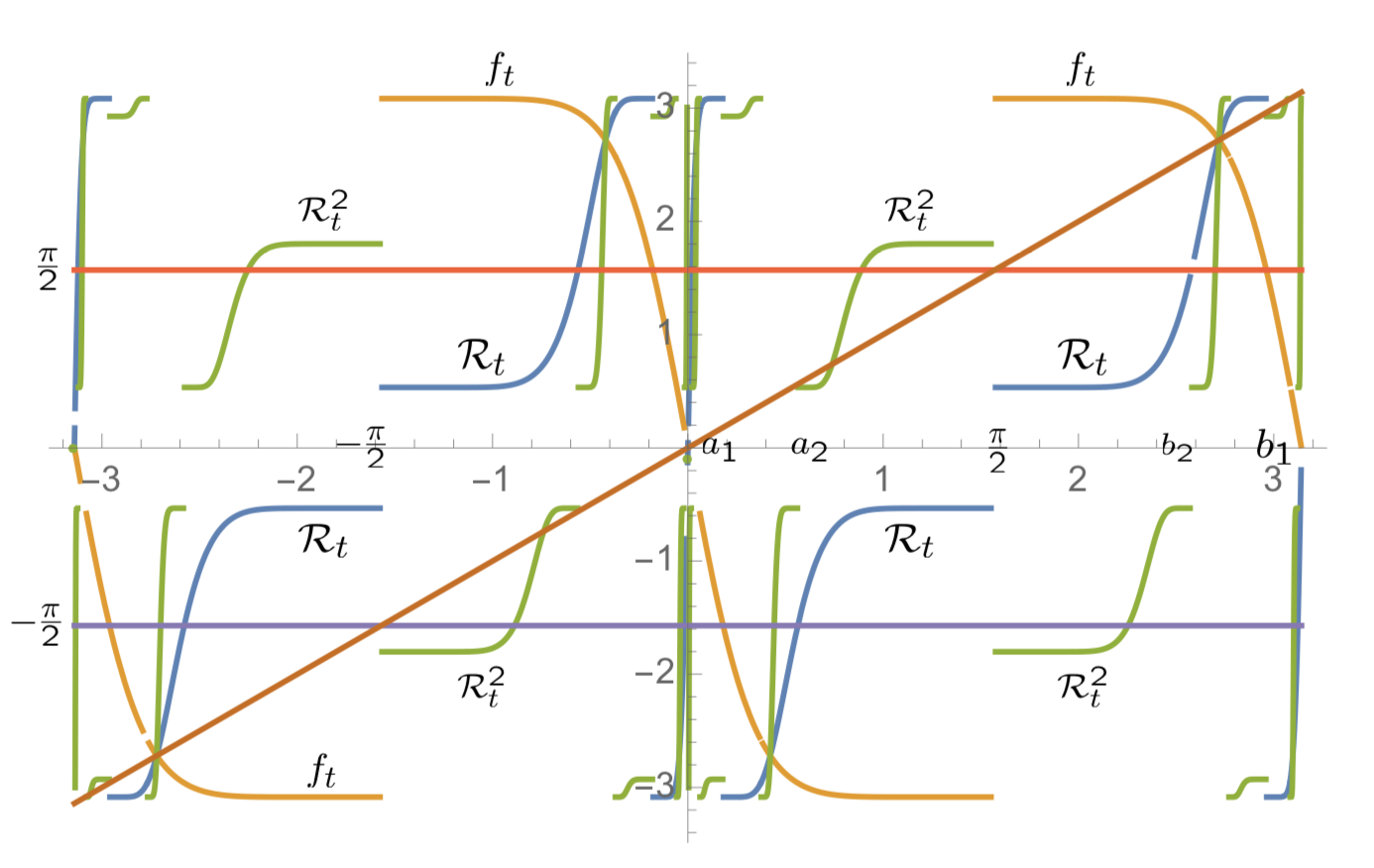}
    \caption{Second Renormalization for $t=3.085$. $f_t$ is yellow, $\R_t$ is blue and $\R_{t}^2$ is green. }~\label{fig7}  
    \end{figure}

  Arguing as we did in the proof of Lemma~\ref{beta1},  we see that  for all $t$ to the right of $\beta_{1}$,   $c_1(\Rt) < \pi/2$.  Also, 
   $c_1(\Rt)$ is continuous and decreases monotonically to $0$  so by the intermediate value theorem  there is a parameter $\beta_{2}$ in the interval $(\alpha_{2}, \pi)$  such that $c_{1} (\R(f_{t})) =a_{1, \beta_{2}}$, where $a_{1, t}$ is an endpoint of  one of the  intervals on which $\R(f_{t})$ is defined. At this $\beta_{2}$, we have $c_{2}(\R(f_{\beta_{2}}))=|\pi/2|$;  that is, the asymptotic value of $\R_{\beta_2}$ is a pole.   Again, as we saw in the proof of Lemma~\ref{beta1}, for $t \in (\beta_1, \beta_2)$ we have $c_{2}(\Rt) < \pi/2$. %{\color{red}  
Moreover, by Theorem~\ref{transversality}, for $t$ in some interval to the right of $\beta_2$, $c_{2}(\Rt) > \pi/2$. %}

Now we examine the behavior of the  periodic cycles (considered as cycles of $T_t$) to see how this merging occurs. 
 As  $t$ approaches   $\beta_{2}$, the asymptotic values become pre-poles and $\beta_{2}$ is a virtual cycle parameter.  When $t$ approaches $\beta_{2}$ from below,  the period $8$ attracting cycles become symmetric virtual cycles of the same period; 
 (see Figure~\ref{fig8}), 
$$
C_{8, \beta_{2}^{-}} =\Big\{  i\infty,\; -\beta_{2}, \; T_{\beta_{2}} (-\beta_{2}), \; \cdots,\; T^{5}_{\beta_{2}} (-\beta_{2}), \; \pi/2 \Big\}
$$
and 
$$
C_{8, \beta_{2}^{-}}' =\Big\{ -i\infty, \; \beta_{2}, \;T_{\beta_{2}} (\beta_{2}),\;  \cdots, \;T^{5}_{\beta_{2}} (\beta_{2}),\; -\pi/2 \Big\}.
$$
$$
C_{8, \beta_{2}^{-}} =\Big\{  i\infty,\; -\beta_{2}, \; T_{\beta_{2}} (-\beta_{2}), \; \cdots,\; T^{5}_{\beta_{2}} (-\beta_{2}), \; \pi/2. \Big\}
$$
Thus we see that $\beta_{2}$ is a parameter that satisfies Lemma~\ref{alpha2}.  

It follows from Corollary~\ref{uniqueness} that taking limits as $t$ approaches $\beta_{2} $ from above,  we have 
\[ \lim_{t\to \beta_{2}^+} T_{t}^{7} (-t) = -i\infty \mbox{  and  }  \lim_{t\to \beta_{2}^+} T_{t}^{7} (t) = i\infty. \]
 Thus, as a  limit from above,  $T^8(\beta_{2})=-\beta_{2}$.  This implies that at $t=\beta_{2}^+$, the two period $8$ virtual cycles $C_{8, \beta_{2}^{-}}$ 
and $C_{8, \beta_{2}^{-}}'$ merge into one symmetric period $16$ virtual cycle (see Figure~\ref{fig9}), 
$$
C_{16, \beta_{2}^+} =\Big\{   i\infty,\; -\beta_{2}, \; T_{\beta_{2}} (-\beta_{2}), \; \cdots,\; T^{5}_{\beta_{2}} (-\beta_{2}), \; \pi/2,\;  -i\infty, 
$$
$$
\beta_{2}, \;T_{\beta_{2}} (\beta_{2}),\;  \cdots, \;T^{5}_{\beta_{2}} (\beta_{2}),\; -\pi/2\Big\}.
$$
Taking the limit from the right, the multiplier of  the cycle $C_{16, \beta_{2}^+}$ is $0$.

\begin{figure}[ht]
\centering
    \includegraphics[width=3in]{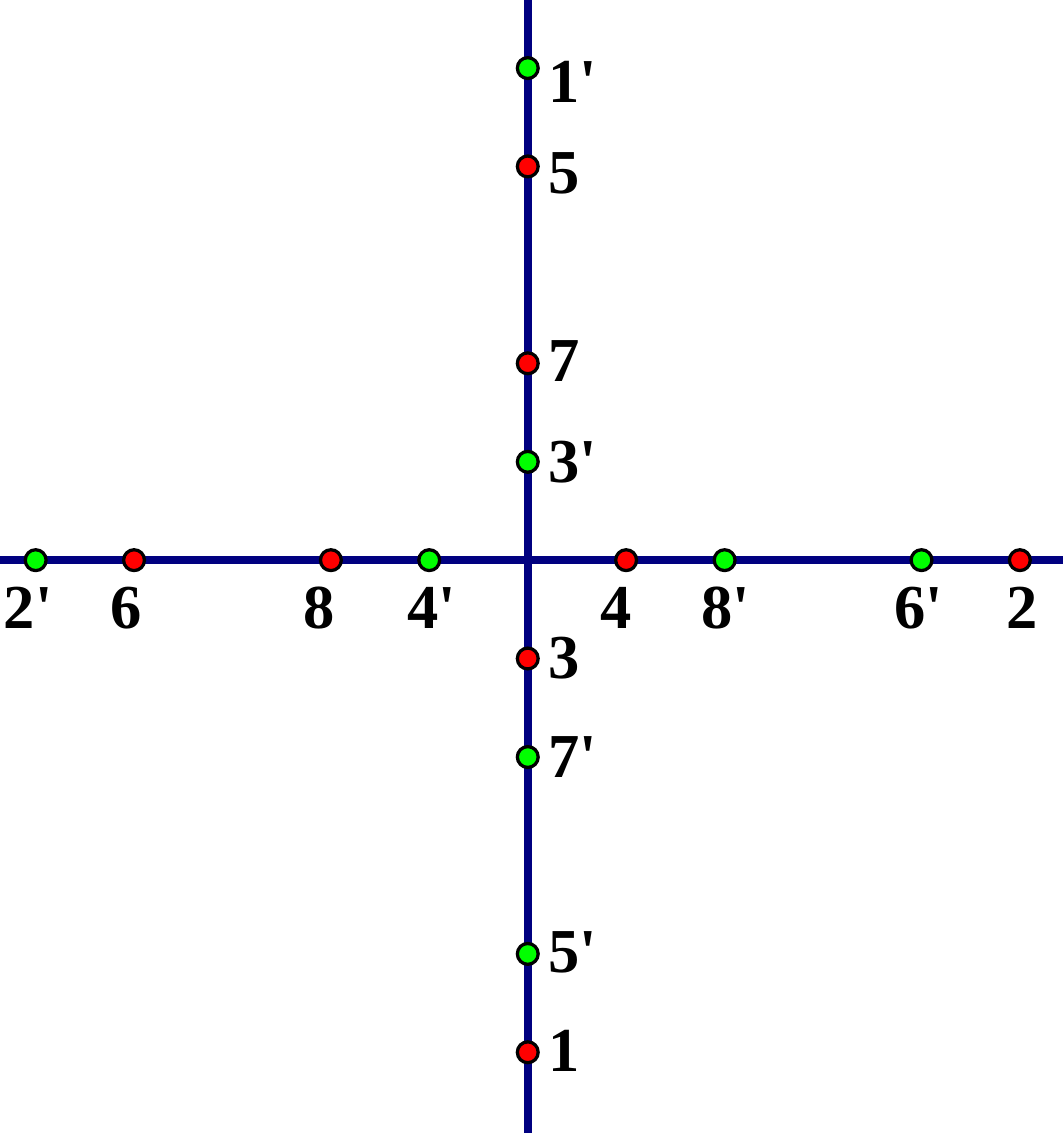}
    \caption{Two period $8$ virtual cycles}~\label{fig8}
  \end{figure}
  
Applying Proposition~\ref{monomult}, we see that as $t$ increases  beyond $\beta_{2}$ there is an interval $(\beta_{2},\alpha_3)$ in which the   cycle $C_{16, \beta_{2}^+}$ is an attracting cycle 
$C_{16, t}$whose multiplier varies from $0$ to $1$.   Thus $\beta_2$ is uniquely defined in the interval $(\alpha_1, \alpha_2)$.

\begin{figure}[ht]
\centering
    \includegraphics[width=3in]{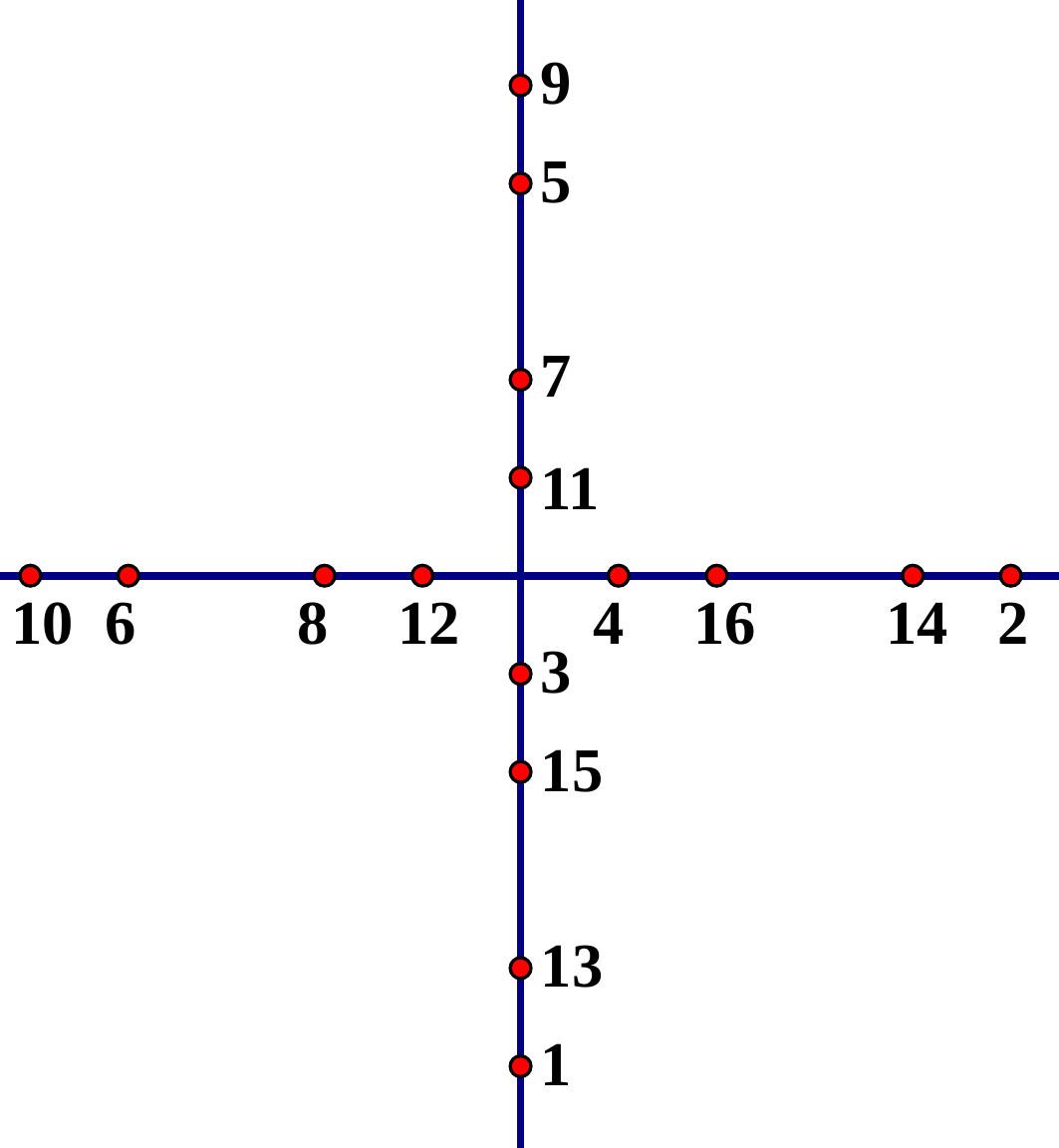}
    \caption{One period $16$ attracting cycle}~\label{fig9}
  \end{figure}

This discussion constitutes a proof of the following lemma asserting the existence of $\alpha_{3}$. The uniqueness of $\alpha_{3}$ follows from 
Corollary~\ref{uniquealpha}.

\medskip
\begin{lemma}\label{1-16case}
There exists  an $\alpha_{3}$  in  $(\beta_{2}, \pi)$ such that for $t \in (\beta_{2},\alpha_{3})$, 
 $0$ is a repelling fixed point and the  pair of  virtual cycles $C_{8, \beta_{2}^{-}}$ and $C_{8, \beta_{2}^{-}}'$  of $T_{\beta_{2}}$  have merged into one attracting cycle 
$$
C_{16,t} =\{ p_{16,1,t} , \; p_{16,2,t}, \cdots,\; p_{16,9,t}, \; p_{16,10,t}, \;\cdots, \; p_{16,16,t}\}.
$$
 (See Figure~\ref{fig9}).  Following our convention, $p_{16,1,t}$ is the lowest point of the cycle in $\Im^{-}$, $p_{16,2,t}$ is the largest in ${\mathbb R}^{+}$,  $p_{16,9,t}$ is the highest in $\Im^{+}$, and $p_{16,10,t}$ is the smallest in ${\mathbb R}^{-}$. 
 The map $T_{t}$ has no other attracting or parabolic cycles.
\end{lemma}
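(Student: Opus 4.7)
The plan is to mimic the argument already used for $\alpha_2$ in Lemma~\ref{1,8case}, applied one level deeper in the renormalization tower. The key input comes from the second renormalization $\mathcal{R}_t^2=f_t^4$ on $I_{2,t}$ constructed just above the statement: at $t=\beta_2$ the asymptotic value of $\mathcal{R}_t$ equals a pole of $\mathcal{R}_t$, so the two period-$8$ attracting cycles $C_{8,t,1}$ and $C_{8,t,2}$ of Lemma~\ref{alpha2} limit to the virtual cycles $C_{8,\beta_2^-}$ and $C_{8,\beta_2^-}'$, with limit multipliers zero. First I would verify that as $t$ crosses $\beta_2$ from above, these two virtual cycles merge into a single symmetric period-$16$ cycle $C_{16,t}$. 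The merging mechanism is exactly the one displayed before the lemma: by Corollary~\ref{uniqueness} and the transversality statement of Theorem~\ref{transversality}, the function $c_{2}(\mathcal{R}_t)-\pi/2$ changes sign strictly through $\beta_2$, which forces the preimage of the pole $-\pi/2$ under $T_t$ to switch from $-i\infty$ to $+i\infty$ along the orbit of the asymptotic value. Chaining the orbit through the eight real and eight imaginary points produces a single cycle of period $16$, whose multiplier is the product of the two former period-$8$ multipliers and hence has limit $0$ from above.

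Next I would establish the attracting property on an open interval $(\beta_2,\alpha_3)$. Since the limit multiplier at $\beta_2^+$ is $0$, by continuity $C_{16,t}$ is attracting for $t$ slightly larger than $\beta_2$. Because both asymptotic values $\pm t$ now share the single symmetric cycle as their $\omega$-limit set, the general theorem of Keen--Kotus used throughout this paper (every attracting or parabolic cycle must capture an asymptotic value) forces $T_t$ to have no other attracting or parabolic cycles in this range. Applying Proposition~\ref{monomult} to the hyperbolic component containing this interval shows that the multiplier $\lambda_{16,t}$ is a real monotonic function of $t$ on $(\beta_2,\alpha_3)$; define $\alpha_3$ as the smallest $t>\beta_2$ where $\lambda_{16,t}=1$. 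Existence of such a value in $(\beta_2,\pi)$ follows from the fact that the cycle cannot persist as attracting all the way to $\pi$, since $T_\pi$ has no attracting cycle in this branch of the bifurcation diagram.

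To pin down the combinatorial picture, I would trace the positions of the $16$ points of $C_{16,t}$ from the known positions of $C_{8,t,1}$ and $C_{8,t,2}$ as $t$ crosses $\beta_2$. The alternation between $\mathbb{R}$ and $\mathfrak{I}$ guaranteed by Lemma~\ref{allevenper}, the symmetry under $z\mapsto -z$, and the ordering already known for the virtual limits at $\beta_2^-$ yield the required labeling: $p_{16,1,t}$ lowest in $\mathfrak{I}^-$, $p_{16,9,t}=-p_{16,1,t}$ highest in $\mathfrak{I}^+$, $p_{16,2,t}$ largest in $\mathbb{R}^+$ and $p_{16,10,t}=-p_{16,2,t}$ smallest in $\mathbb{R}^-$, with the remaining points distributed in the fundamental intervals prescribed by the pre-poles defining $I_{2,t}$.

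The step I expect to be the most delicate is uniqueness of $\alpha_3$ in $(\beta_2,\pi)$; namely, ruling out that the multiplier oscillates back to $1$ after first reaching it. This is precisely the content of Corollary~\ref{uniquealpha}, proved later via the holomorphic-motions/transversality machinery of \S\ref{tran}, and the proposal is simply to invoke that corollary once it is available. The remaining claims (existence of $C_{16,t}$, its attractiveness, the multiplier running from $0$ to $1$, and the non-existence of other attracting or parabolic cycles) follow from the renormalization setup together with Proposition~\ref{monomult} as outlined above.
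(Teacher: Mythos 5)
Your proposal tracks the paper's own argument closely: both invoke the second renormalization and the virtual cycle structure at $\beta_2$, use Corollary~\ref{uniqueness} together with Theorem~\ref{transversality} to justify the sign flip of $T_t^7(\pm t)$ across $\beta_2$ (which is the merging mechanism), observe that the merged cycle has multiplier tending to $0$ as the product of the two period-$8$ multipliers, apply Proposition~\ref{monomult} to get the monotone multiplier and hence the interval $(\beta_2,\alpha_3)$, use the Keen--Kotus principle to rule out other non-repelling cycles, and defer uniqueness of $\alpha_3$ to Corollary~\ref{uniquealpha}. This is the same proof; the only cosmetic difference is your side-remark that the cycle cannot persist as attracting to $t=\pi$, which the paper does not need since Proposition~\ref{monomult} already delivers that the multiplier runs monotonically up to $1$ at the right endpoint of the hyperbolic interval.
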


As we saw for $\Rt$, as $t$ approaches $\pi$, $\Rt^2(\pi/2^{-})=\Rt^2(\pi/2^{+})=0$ so we see that  $\Rt$ is defined for all $t \in (\beta_1, \pi)$.

\section{General Pattern of Cycle Doubling and Merging.}\label{gen pattern}

 Now that we have seen the period quadrupling, period splitting, and period doubling phenomena for low periods and analyzed the first examples of cycle doubling and cycle merging,   we are ready to state and prove our first  main result, Theorem~\ref{bif}, which gives the  general pattern of cycle doubling and cycle merging.   To make the statement comprehensive, we include the  period doubling case, Part a) which occurs for $n=1$ and was proved in Lemma~\ref{beta1}.  
 
\medskip
\begin{theorem}~\label{bif}
There are two sequences interleaved of parameters $\{\alpha_{n}\}_{n=1}^{\infty}$ and  $\{\beta_{n}\}_{n=1}^{\infty}$ for the tangent family 
$$
\alpha_{1}< \beta_{1}<\alpha_{2}< \beta_{2} <\alpha_{3} <\cdots
$$
$$
 < \beta_{n}<\alpha_{n}<\cdots  <\pi, 
$$
and they correspond to the following bifurcation phenomena:
\begin{itemize}
\item[\rm{a})] {\bf (Period Doubling):} At  $t=\alpha_1$, $T_t$ has $2$ parabolic cycles of period $2$.  For $t \in (\alpha_{1},\beta_{1})$, the analytic continuation of the cycles are repelling and $T_t$ has two new symmetric attracting cycles of period $4$; it has no other attracting or parabolic cycles. 
\item[\rm{b})] {\bf (Virtual Periodic Cycles):}  
     For $n \geq 1$,  the parameters $\beta_{n}$ are virtual cycle parameters.    
\begin{itemize}
\item[\rm{(i)}]
As $t \to \beta_{n}^{-}$, the  two symmetric period $2^{n+1}$ attracting cycles limit onto virtual cycles of the same period:
$$
C_{2^{n+1}, \beta_{n}^{-}} =  
 \Big\{ - i\infty,\; \beta_{n}, \; T_{\beta_{n}} (\beta_{n}), \; \cdots,\; T^{2^{n+1}-2}_{\beta_{n}} (\beta_{n}), \; (-1)^{n+1}\frac{\pi}{2}  \Big\}
$$
and 
$$
C_{2^{n+1}, \beta_{n}^{-}}' = 
\Big\{ i\infty, \; -\beta_{n}, \;T_{\beta_{n}} (-\beta_{n}),\;  \cdots, \;T^{2^{n+1}-2}_{\beta_{n}} (-\beta_{n}),\; (-1)^{n}\frac{\pi}{2} \Big\}.
$$
\item [\rm{(ii)}] As $t \to \beta_{n}^{+}$, a single symmetric  attracting cycle of period $2^{n+2}$ limits onto a virtual cycle of the same period:
$$
C_{2^{n+2}, \beta_{n}^{+}} =\Big\{ -i\infty, \; \beta_{n}, \; T_{\beta_{n}} (\beta_{n}), \; \cdots,\; 
T^{2^{n+1}-1}_{\beta_{n}} (\beta_{n}), \; (-1)^{n+1} \frac{\pi}{2},\; i\infty,\;
$$
$$
\,  \, \quad  -\beta_{n}, \;T_{\beta_{n}} (-\beta_{n}),\;  \cdots, \;T^{2^{n+1}-3}_{\beta_{n}} (-\beta_{n}), \; (-1)^{n}\frac{\pi}{2} \Big\}.
$$
\end{itemize}
\item[\rm{c})] {\bf (Cycle Merging):} For  $t \in (\beta_{n},\alpha_{n+1})$, there is a single attracting cycle of period
  $2^{n+2}$.  It is the analytic continuation of the  virtual cycle $C_{2^{n+2}, \beta_{n}^{+}}$ and is given by 
  $$
C_{2^{n+2}, t} =\Big\{ p_{1,t},\; p_{2,t}\; \cdots\; p_{2^{n+2},t} \Big\} 
$$
where $p_{1,t}$ is the lowest point of the cycle on  $\Im^{-}$, 
$p_{2,t}$ is the rightmost   on ${\mathbb R}^{+}$, $p_{2^{n+1}+1,t}$ is the highest  on $\Im^{+}$, and $p_{2^{n+1}+2, t}$ is the leftmost ${\mathbb R}^{-}$.  
This  cycle is the only attracting periodic cycle of $T_{t}$ and its multiplier  goes from $0$ to $1$ as $t$ moves from the lower to  the upper endpoint of the interval $(\beta_{n},\alpha_{n+1})$. 
\item [\rm{d})]{\bf (Parabolic Periodic Cycles):}  As $t$ approaches $\alpha_{n+1}$ from below, the limit of the attracting period $2^{n+2}$ cycle $C_{2^{n+2},t}$ is a parabolic   cycle $C_{2^{n+2}, \alpha_{n+1}}$ of the same period; its  multiplier is  $1$ and $T_t$ has no other attracting or parabolic periodic cycle.
\item [\rm{e})]{\bf (Cycle Doubling):} As $t$ moves into the interval  $(\alpha_{n+1},\beta_{n+1})$, the parabolic period $2^{n+2}$ cycle $C_{2^{n+2}, \alpha_{n+1}}$ bifurcates into two  attracting periodic cycles of the same period: that is, its analytic extension becomes repelling but two new attracting cycles of the same period, 
$C_{2^{n+2}, t}$ and $C_{2^{n+2}, t}'$ appear. The multipliers of these  new cycles are equal and go from $1$ to $0$ as $t$ increases through the interval.
 $T_{t}$ has no other attracting or parabolic periodic cycle for $t \in   (\alpha_{n+1},\beta_{n+1})$. 
  \item  [\rm{f})]{\bf (Renormalization):} For all $t \in (\beta_{n}, \pi)$, the 
   renormalizations $\Rt^n = f^{2^{n}}$ are defined on a symmetric quadruple of intervals $I_{n,t} \subset I_{n-1,t}$ bounded by pre-poles of order $n-1$ and  the poles $\pm \pi/2$ respectively.    The renormalized functions are ``tangent-like'' in that in each interval they are continuous  and strictly increasing with horizontal asymptotes.\footnote{ By abuse of notation, we also call  $\Rt^n$ a renormalization of $T_t$.}
 \end{itemize}
\end{theorem}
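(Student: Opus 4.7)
The plan is induction on $n$, with the base case $n=1$ already established by Lemmas~\ref{beta1}, \ref{1,8case}, \ref{alpha2}, and \ref{1-16case}, together with the first two renormalizations $\Rt$ and $\Rt^2$ constructed in Sections~\ref{doubling-ren} and \ref{merging2}. The inductive step propagates these results one stage at a time by applying exactly the same analysis to $\Rt^n=f_t^{2^n}$ that was applied to $f_t$ itself; this is justified because $\Rt^n$ is ``tangent-like'' on $I_{n,t}$: strictly monotone increasing and continuous on each component, odd-symmetric about $0$, with a symmetric pair of asymptotic values playing the role of virtual images of the poles $\pm\pi/2$. Preservation of this tangent-like structure under one further renormalization is itself part of the inductive claim (Part (f)).

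Assume Parts (a)--(f) hold through stage $n$, so $\Rt^n$ is defined for all $t\in(\beta_n,\pi)$ and the attracting cycle $C_{2^{n+2},t}$ (viewed as a cycle of $T_t$) has multiplier $0$ at $\beta_n^+$. By Proposition~\ref{monomult} the multiplier is strictly monotone on its hyperbolic component, and by Lemma~\ref{pos} any parabolic multiplier on this component must equal $+1$, so there is a unique $\alpha_{n+1}\in(\beta_n,\pi)$ where the multiplier reaches $1$; this yields Part (d). Part (e) then follows from Lemma~\ref{parabolicbur}, once one checks that at each point of $C_{2^{n+2},\alpha_{n+1}}$ there are exactly two attracting petals, i.e.\ the local expansion is $z+a_3 z^3+o(z^3)$. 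This is verified as in Lemma~\ref{alpha2}: each cycle point lies on $\mathbb R$ or $\Im$, $f_t^{2^{n+1}}$ preserves orientation on its intervals of continuity, and Lemma~\ref{inftyfs} produces a petal containing an infinite ray attached to each asymptotic orbit. The absence of a third petal follows from the fact that each attracting or parabolic basin must absorb an asymptotic value; the symmetric pair $\pm t$ then accounts for at most two distinct petals.

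To produce $\beta_{n+1}$ and the merging of Parts (b), (c) at the next stage, I adapt the argument of Section~\ref{merging2}. Set $c_2(\Rt^n)(t)=\Rt^n\circ\Rt^n(\pi/2^+)$, the positive ``asymptotic value'' of the candidate next renormalization. Just to the right of $\alpha_{n+1}$ one has $c_2(\Rt^n)>\pi/2$ by the mean value argument used for $c_2(f_t)$ in Lemma~\ref{beta1}, while as $t\to\pi$ the value tends to $0$; by the intermediate value theorem there is $\beta_{n+1}\in(\alpha_{n+1},\pi)$ with $c_2(\Rt^n)=\pi/2$, so $\beta_{n+1}$ is a virtual cycle parameter (Part (b)). The left/right alternation --- two symmetric virtual cycles of period $2^{n+2}$ approaching from below merging into a single symmetric virtual cycle of period $2^{n+3}$ from above --- is the same combinatorial phenomenon seen at $\beta_1$ and $\beta_2$, with the sign $(-1)^{n+1}\pi/2$ in the virtual cycle coordinates arising from the parity of the iterate count along the orbit of each asymptotic value. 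Proposition~\ref{monomult} then delivers the attracting cycle of period $2^{n+3}$ for $t$ just above $\beta_{n+1}$, completing Part (c). Part (f) at stage $n+1$ follows by applying Definition~\ref{1ren} to $\Rt^n$, using that $|c_2(\Rt^n)|<\pi/2$ throughout $(\beta_{n+1},\pi)$.

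The main obstacle I anticipate is the combinatorial bookkeeping in Part (b): tracking how the two symmetric virtual cycles on the left of $\beta_{n+1}$ interlace to form a single symmetric virtual cycle of doubled period on the right, and in particular pinning down the sign pattern $(-1)^{n+1}\pi/2$ for the final coordinate. This requires careful control of which fundamental interval the orbit of the asymptotic value visits at each stage and from which side it limits onto the pole at $\beta_{n+1}$. Uniqueness of $\beta_{n+1}$ (and hence of $\alpha_{n+1}$) demands transversality of $c_2(\Rt^n)$ at the crossing, which is supplied by Theorem~\ref{transversality}. A secondary difficulty, excluding additional attracting or parabolic cycles at every stage, is handled uniformly by the observation that every such cycle must contain an asymptotic value in its immediate basin, and the symmetric pair $\pm t$ accounts for at most two cycles, both of which are explicitly exhibited.
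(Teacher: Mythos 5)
Your proposal follows essentially the same route as the paper: induction on $n$ with the base cases $n=1,2$ supplied by the earlier lemmas, Parts (d) and (e) handled via Proposition~\ref{monomult}, Lemma~\ref{pos}, the two-petal count and Lemma~\ref{parabolicbur}, Parts (b), (c), (f) handled via the inductive construction of $\mathcal R_t^{n+1}$ with the intermediate value argument on $c_2(\mathcal R_t^n)$, and uniqueness deferred to the transversality theorem. The places you flag as delicate (the sign bookkeeping in Part (b) and uniqueness) are exactly where the paper delegates to its inductive conventions for $c_m$ and to Corollaries~\ref{uniqueness} and~\ref{uniquealpha}, so the match is close.
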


\begin{figure}[ht]
\centering
    \includegraphics[width=5in]{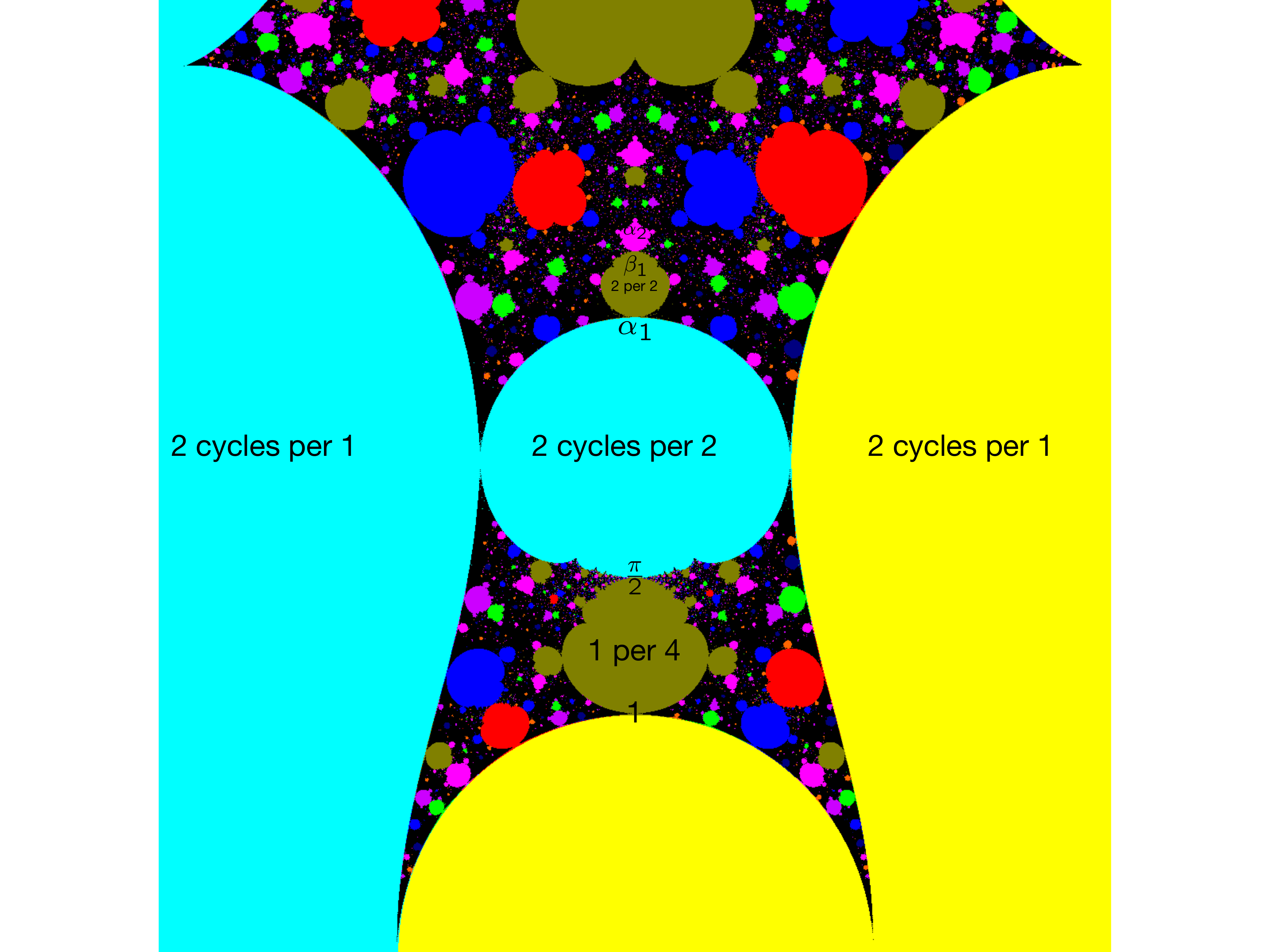}
  \caption{This is a computer generated picture illustrating Theorem~\ref{bif}. The coloring is determined by the period of the cycle attracting the asymptotic value $t$.  Thus regions with one cycle of period $N$ have the same color as regions with two cycles of period $N$.  The color coding is: 
period $1$--yellow;  period $2$--aqua;  period $3$--red;  period $4$--khaki;  \ldots,  period $8$--bright pink; \ldots .  
The range is $\{ |\Re t | < 3.15, 0< \Im t  < 3.15 \}$. 
}~\label{fig10}
\end{figure}

\begin{proof}[Proof of Theorem~\ref{bif}]
 The proof of the existence of the $\alpha_n$ and $\beta_n$ is by induction on $n$.  The uniqueness is proved in Corollaries~\ref{uniqueness} and ~\ref{uniquealpha}.
 
In \S\ref{sec:period splitting}--\S\ref{merging2} we saw that  Parts $b)-f)$ hold for $n=1,2$.  
We prove now that if the theorem holds for some $n$ then  it  holds for $n+1$. 
 We show this first for Parts $d)$ and $e)$. 

 We assume the theorem holds for $t  \leq \beta_n$ and consider $t$ in the interval $(\beta_n,\alpha_{n+1})$.  By the induction hypothesis, the virtual cycle  at $t=\beta_n^+$ has period $2^{n+2}$, is symmetric, and so attracts both asymptotic values, and  has  multiplier $0$. 
By Lemma~\ref{pos}, Corollary~\ref{uniqueness} and Proposition~\ref{monomult},  its analytic continuation is an attracting cycle of the same period whose multiplier is a positive strictly increasing function of $t$ for  $t >\beta_n$.   Hence, for some $\alpha_{n+1} >\beta_n$,  the multiplier of the cycle has reached  $1$;  thus $T_{\alpha_{n+1}}$ has a single symmetric  parabolic cycle $C_{2^{n+2}, \alpha_{n+1}}$ of   period $2^{n+2}$ whose multiplier is $1$. The uniqueness of $\alpha_{n+1}$ follows from Corollary~\ref{uniquealpha}. This shows Part $d)$ holds for $n+1$.

\begin{figure}[ht]
  \centering
  \includegraphics[width=3in]{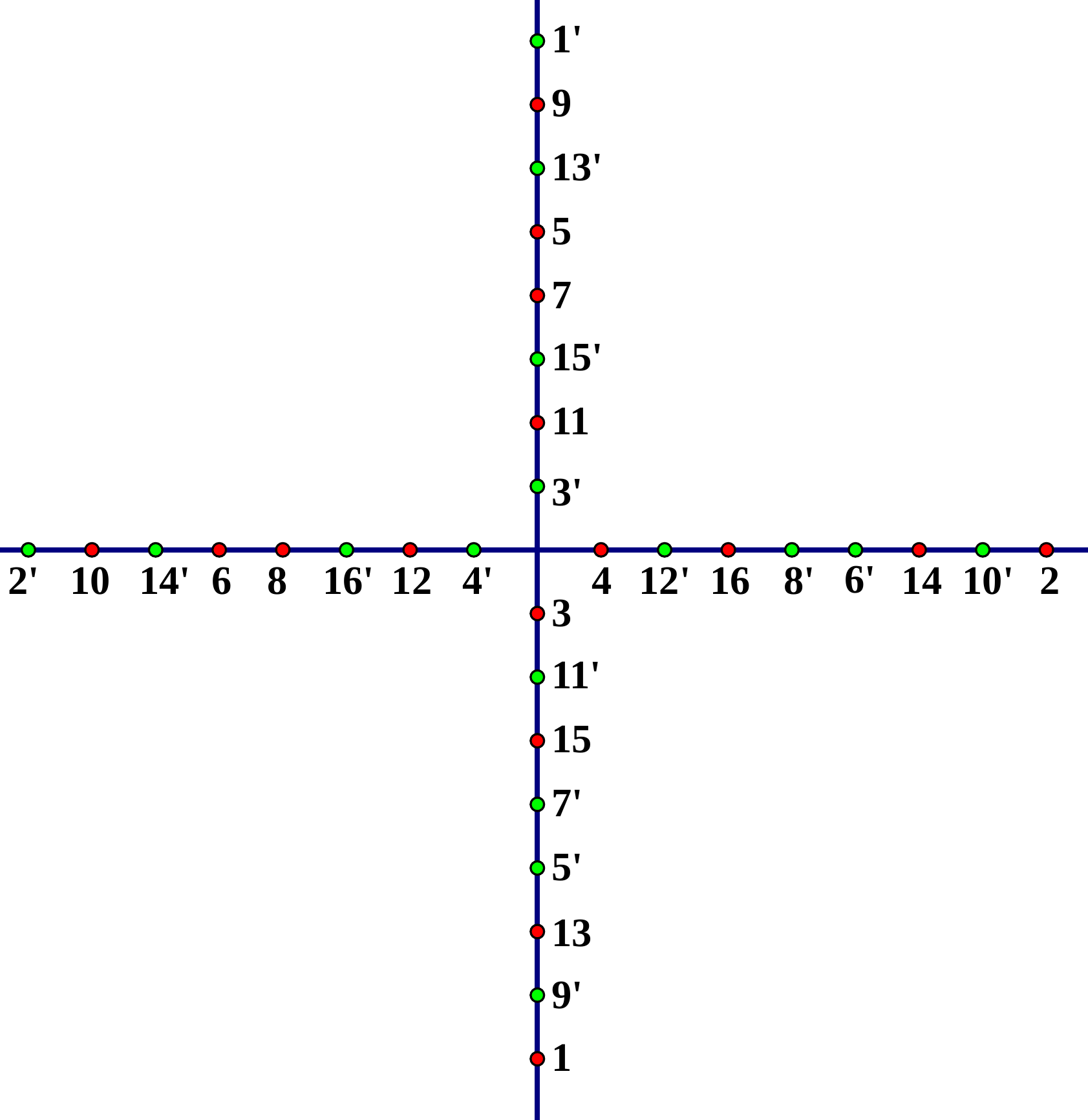}
  \caption{Two period $16$ attracting cycles}~\label{fig11}
\end{figure}

To show that Part $e)$ holds for $n+1$  we start with the parabolic cycle $C_{2^{n+2}, \alpha_{n+1}}$.  As the limit of a symmetric cycle for $t$ in the interval $(\beta_n, \alpha_{n+1})$, it attracts both asymptotic values $\pm \alpha_{n+1}$.   Since its multiplier is $+1$, we  need to use the argument of \S\ref{std doubling}. Thus, in order  to prove that as $t$ increases thorugh $\alpha_{n+1}$, the cycle becomes repelling and a new pair of period $2^{n+2}$ attracting cycles appear, we need to to show that there are  exactly two attracting  petals at each point in the   parabolic cycle $C_{2^{n+2}, \alpha_{n+1}}$;  then we can apply Lemma~\ref{parabolicbur}.   This follows directly from the argument in the proof of Lemma~\ref{alpha2},  with $2$ replaced by  $n$   so that  $\alpha_3$ becomes $\alpha_{n+1}$, $C_{8,\alpha_3}$ becomes $C_{2^{n+2}, \alpha_{n+1}}$ and so on (see Figure~\ref{fig11}).

Now we use the renormalization process to prove  that  Parts $b)$, $c)$ and $f)$ hold for $n+1$.  That is, we assume that  for all $t  \in (\beta_n, \pi)$ the $n^{th}$ renormalization  ${\mathcal R}^n$ of $f_t$ is defined and that $\beta_n$ is a virtual cycle parameter on the left boundary of an interval in which $f_t$ has a single attracting period cycle of period $2^{n+2}$ attracting both asymptotic values.   

Recall that in \S\ref{merging1} where $n=1$, the asymptotic value $\beta_1$ was the virtual image of $-i\infty$ under $T_{\beta_1}$ whereas in \S\ref{merging2} where $n=2$, the asymptotic value $\beta_2$ was the virtual image of $+i\infty$ under $T_{\beta_2}$.  This is because in the intervals where the function $f_t$ and its renormalization are both defined one is positive and one is negative.    Below, we see that  the same thing holds as we repeat the renormalization process;  $\beta_{n+1}$ is the virtual image of $(-1)^{n+1}i \infty$ under $T_{\beta_{n+1}}$.    

We used the renormalized function $\Rt$ and $\Rt^2$ and the auxilliary functions $c_1(f_t)=f_t(\pi/2^+), c_2(f_t)=f_t^2(\pi/2^+)$ and $c_1(\Rt)=\Rt(\pi/2^+), c_2(\Rt)=\Rt^2(\pi/2^{+})$   
to find the points $\beta_1$ and $\beta_2$. There are analogous functions for $\Rt^n$ which we define below.  The functions are defined as limits and the signs vary depending on the direction of the limit and the parity of $n$ so we will make some conventions about sign in the definitions below.

Define $c_m(f_t)=|f^m(\pi/2^+)|$, $m=1,2,\ldots $;  then $\pm c_m(f_t)$ are the points in the orbit of the asymptotic value $t$ of $f_t$. Inductively define
\[  c_m(\Rt) =| \Rt^m(\pi/2^{+}) |= | f_t^{2^{n}m}(\pi/2^{+} )| = c_{2^{n}m}(f_t). \]
 
 Set $\Rt^{0}=f_t$ and $a_{0,t}=b_{0,t}=\pi/2$; then   ${\mathcal R}^n(T_t)$ is defined inductively  on a set of intervals $I_{n,t}= I_{n1}^-\cup I_{n2}^-\cup I_{n2}^+ \cup I_{n1}^+$, each bounded by the pre-pole $\pm a_{n-1,t}$ or $\pm b_{n-1,t}$ of ${\mathcal R^{n-1}(T_t)}$ closest to $\pm \pi/2$,  and each containing a pre-pole $\pm a_{n,t}$ or $\pm b_{n,t}$ of ${\mathcal R^{n}(T_t)}$, as follows:
 
 \[ \Rt^n = \Rt\circ \Rt^{n-1}=f_t^{2^{n}}: I_{(n)1} ^-\to [(-1)^{n-1}c_1(\Rt^{n-1}), (-1)^{n-1}c_2(\Rt^{n-1})] \]
 \[\Rt^n = \Rt\circ \Rt^{n-1}=f_t^{2^{n}}: I_{(n)2} ^-\to [(-1)^{n}c_2(\Rt^{n-1}), (-1)^{n}c_1(\Rt^{n-1})]  \]
  \[ \Rt^n = \Rt\circ \Rt^{n-1}=f_t^{2^{n}}: I_{(n)2} ^+\to [(-1)^{n-1}c_1(\Rt^{n-1}), (-1)^{n-1}c_2(\Rt^{n-1})]  \]
\[  \Rt^n = \Rt\circ \Rt^{n-1}=f_t^{2^{n}}: I_{(n)1} ^+\to [(-1)^{n}c_2(\Rt^{n-1}), (-1)^{n}c_1(\Rt^{n-1}).]  \]
  
   Note that $c_1(\Rt^n)$ is the asymptotic value of $\Rt^n$ and the functions $\pm c_m(\Rt^n)$ define its orbits.
   When $n$ is odd    $c_{1}(\Rt^n)<c_{2}(\Rt^n)$  whereas when $n$ is even the inequalities are reversed.  The point $\beta_{n}$ is defined by solving $c_1(\Rt^{n})=a_{n,t}$ for $t$;  that is, the asymptotic value of $\Rt^{n-1}$ is a pole of $\Rt^{n-1}$ and $\beta_n$ is a virtual cycle parameter.  
   
 If there is more than one solution,   we take the largest such that  $c_{2}(\Rt)$ is continuous as our 
$\beta_{n}$.  For  $t>\beta_{n}$, 
we  have  $c_{1}(\Rt^n) <\pi/2$ for $n$ odd and $c_{2}(\Rt^n)>\pi/2$ for $n$ even.  

To show that $\Rt^{n+1}$ is defined, we need to show we can solve $c_1(\Rt^{n+1})=a_{n+1,t}$ for $t$.    This follows,  as it did in the proof of Lemma~\ref{beta1},  from the above inequalities and the fact that the branch of $c_m(\mathcal{R}_{t})$ defined for $t> \beta_n$ has as an asymptotic value (from the left) at its discontinuity $d_{n}$,   either $a_{n+1,d_{n}}< \pi/2$ or $b_{n+1,d_{n}}>\pi/2$ as   $n$ is even or odd;  the existence of the solution follows then from  the intermediate value theorem.  This proves part $f)$ of the theorem.  

Since $\beta_{n+1}$ is a virtual center parameter and $f_t$ (or $T_t$) has symmetric attracting cycles for $t \in(\alpha_{n+1}, \beta_{n+1})$, taking the limit from below, these cycles approach   virtual cycles of period $2^{n+2}$ (or $2^{n+1}$), 
  $$
C_{2^{n+2}, \beta_{n+1}^{-}} =   \Big\{ - i\infty,\; \beta_{n+1}, \; T_{\beta_{n+1}} (\beta_{n+1}), \; \cdots,\; T^{2^{n+2}-2}_{\beta_{n+1}} (\beta_{n+1}), \; (-1)^{n+2}\frac{\pi}{2}  \Big\}
$$
and 
$$
C_{2^{n+2}, \beta_{n+1}^{-}}' =   \Big\{ i\infty, \; -\beta_{n+1}, \;T_{\beta_{n+1}} (-\beta_{n+1}),\;  \cdots, \;T^{2^{n+1}-2}_{\beta_{n+1}} (-\beta_{n+1,}),\; (-1)^{n+1}\frac{\pi}{2} \Big\}.
$$

Next, taking the limit from above, we have  $\lim_{t\to \beta_{n}^+} T_{t}^{2^{n+2}-1} (t) = i\infty$ and $\lim_{-t\to \beta_{n}^+} T_{t}^{2^{n+2}-1} (t) = -i\infty$. 
This implies, as it did when $n$ was $1$,  that at $\beta_{n+1}$,  the symmetric virtual cycles $C_{2^{n+2}, \beta_{n+1}^{-}}$ and $C_{2^{n+2}, \beta_{n+1}^{-}}'$ merge into a single cycle with double the period, 
$$
C_{2^{n+3}, \beta_{n+1}^{+}} =\Big\{   i\infty,\; -\beta_{n+1}, \; T_{\beta_{n+1}} (-\beta_{n+1}), \; \cdots,\; T^{2^{n+2}-3}_{\beta_{n+1}} (-\beta_{n+1}), \; (-1)^{n+2}\frac{\pi}{2},
$$
$$
 -i\infty, \;\beta_{n+1}, \;T_{\beta_{n+1}} (\beta_{n+1}),\;  \cdots, \;T^{2^{n+2}-3}_{\beta_{n+1}} (\beta_{n+1}),\; (-1)^{n+1}\frac{\pi}{2}\Big\}.
$$
Thus  Part $b)$ for holds for $n+1$. 
 
Since $C_{2^{n+3}, \beta_{n+1}^{+}}$ is a virtual cycle, its multiplier is $0$.  As  $t$ increases through  $\beta_{n+1}$, it becomes an attracting cycle of period $2^{n+3}$ (and so an attracting cycle of period $2^{n+1}$ of $f_t$) that attracts both asymptotic values.  By Lemma~\ref{pos} and  Proposition~\ref{monomult} the multiplier of this cycle  is a positive strictly increasing function of $t$ and there is a point $t=\alpha_{n+2}$ where the multiplier reaches $1$.   Thus $T_{\alpha_{n+2}}$ has a period $2^{n+3}$ parabolic cycle. This shows  Part $c)$ holds for $n+1$ and completes the proof of the theorem.
\end{proof}

\section{Transversality}~\label{tran}

 Theorem~\ref{bif} gives us the existence of the cycle doubling parameters $\alpha_{n}$ and 
cycle merging parameters $\beta_{n}$. The uniqueness of the $\beta_{n}$ in the interval $(\alpha_{n}, \alpha_{n+1})$ is more delicate and requires a different approach:  it requires the concept of transversality.  Once we prove this uniqueness, we will use it to obtain the uniqueness of the $\alpha_n$ in the interval $(\beta_{n-1}, \beta_n)$. In a recent preprint,~\cite{LSS}, Levin, van Strien, and Shen  study  the monotonicity of the entropy function for  families of continuous folding maps by using  holomorphic motions. Here, we adapt their techniques to prove  transversality for the   family we have been studying in this paper, the functions
\[ f_t(x) = - t \tanh ( t \tan (x)), \, t \in (0, \pi) \]
of the real axis to itself and their renormalizations.  Although the maps have discontinuities at the poles and pre-poles of the tangent, as we have shown in Section~\ref{basic},    the functions have well defined right and left limits at these points.  Keeping careful track of the signs in these limits we will prove that at all the cycle merging parameters $\beta_{n}$ defined above, transversality holds.  

Roughly speaking, this means that the function defining the   asymptotic value of the $n^{th}$ renormalization,  $c_2({\mathcal R}^n_t)=c_{2^n}(t)$, is invertible in a neighborhood of the virtual cycle parameter $\beta_n$.  By choosing the one-sided limits appropriately, we will show that the derivative is actually positive at these parameters.  To state this precisely, we 
 need some notation which define  here,  and use  throughout the rest of this section.   
 
 We fix $t_0$ as  a virtual cycle parameter of order $m$ and set $c_0= \pm \pi/2$ where the sign is chosen so that  
 $f^{m-1}_{t_0} (t_{0}) =c_0$ and,  taking the appropriate directional limit,  $f^{m}_{t_{0}} (c_{0})=c_{0}$. Set $c_{i+1}=f^{i}_{t_{0}}(t_0)$ for $i=0, 1, \ldots, m-2$. Define
 $$
P=\{ c_{0}, c_{1}, \ldots, c_{m-1}\}.
$$
so that $P$ is a subset of the Riemann sphere (and is actually contained in the real axis). For  notational simplicity, set $g=f_{t_{0}}$ so that $g(c_{i})=c_{i+1}$ for $i=0, 1, \ldots, m-2$ and $g(c_{m-1})=c_m=c_{0}$.  For later use we define the constant
$$
a=\min\{ |c_{i}-c_{j}| \;|\; 0\leq i\not=j\leq m-1\}.  
$$

Set
\begin{equation}~\label{traneq}
\Phi (t) =f_{t}^{m-1}(t) -c_0. 
\end{equation}
Then {\em transversality} holds at $t_0$ if the derivative $\Phi'(t_{0}) \not=0$.

 \subsection{ Holomorphic motions, lifts of holomorphic motions and the transfer operator}

Since our proof of  transversality, like that in~\cite{LSS},  uses holomorphic motions, lifts of these motions and the transfer operator,  we recall their definitions in our context.  For a comprehensive  study of holomorphic motions, we refer the reader to~\cite{GJW}. 

\begin{definition}\label{holommot}
Suppose $P$ is a subset of the Riemann sphere $\widehat{\mathbb{C}}$ and $\Delta_{r}=\{ z\in \mathbb{C}\; |\; |z|< r\}$ 
is the disk of radius $r>0$ centered at $0$.  We set  $\Delta=\Delta_{1}$.  
A map $h (s, z): \Delta_{r}\times P\to \widehat{\mathbb{C}}$ is called a {\em holomorphic motion of $P$ over $\Delta_{r}$}  if 
\begin{itemize}
\item[(1)] $h(0,z)=z$ for all $z\in P$;
\item[(2)] for any fixed $s\in \Delta_{r}$, the map $h_{s} (\cdot): P\to \widehat{\mathbb{C}}$ is injective;
\item[(3)] for any fixed $z\in P$, the map $h_{z} (\cdot): \Delta_{r}\to  \widehat{\mathbb{C}}$ is holomrophic.
\end{itemize}
\end{definition}

 Let $E=\{k\pi+\pi/2\;|\; k\in \mathbb{Z}\}$ denote the set of poles of the 
 tangent map $T_w(z) =iw\tan z$.   They are  poles of the real map 
  $$
 f_{t} (x) = it \tan (it \tan x) =-t \tanh (t\tan x), \, t,x \in \RR,
 $$
 in the sense that although the map is discontinuous at such a point, the left and right limits are well defined. 
    
 Note that every real map $f_{t}(x)$ has a meromorphic extension 
 to the whole complex plane $\mathbb{C}$ as
 $$
F(w,z)= F_{w} (z) =T_{w}^{2}(z)=iw \tan (iw \tan z).
 $$
 We will use this notation for this extension below. 
The following lemma follows directly from the definition of $F_w$ and the facts that  the complex map $F_{w}(z)$ has asymptotic  values at $\{\pm w\}$ and is not defined at the pre-poles of $T_w(z)$.  Let 
$$
S_{w,0}=\mathbb{C}\setminus (E\cup T_{w}^{-1}(E))\quad \hbox{and}\quad S_{w,1}=\mathbb{C}\setminus \{-w, w\}
$$
be two Riemann surfaces determined by  the poles and pre-poles of $T_{w}(z)$.

\begin{lemma}\label{cov}
The map 
$$
F_{w}(z) : S_{w.0}\to S_{w,1}
$$
is a holomorphic covering map of infinite degree.
\end{lemma}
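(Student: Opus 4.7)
The plan is to deduce the covering property of $F_w=T_w\circ T_w$ from that of $T_w$ itself by careful bookkeeping of domains. First I would recall the classical fact (implicit in Section~\ref{basic}) that
\[
T_w\colon \mathbb{C}\setminus E \longrightarrow \mathbb{C}\setminus\{\pm w\}
\]
is a holomorphic covering of infinite degree. Indeed, the derivative $T_w'(z)=iw\sec^2 z$ vanishes nowhere, so $T_w$ has no critical points; property~(b) of the family $\mathcal{T}$ identifies $\pm w$ as the only asymptotic values of $T_w$ (at its essential singularity at $\infty$); and the Iversen--Nevanlinna theory of meromorphic functions of Speiser class then yields the covering property away from the asymptotic values, while $\pi$-periodicity supplies the infinite degree.

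Second, I would pin down the natural domain and target of $F_w$. For the composition to be defined one needs both $T_w(z)$ to make sense (so $z\notin E$) and $T_w(T_w(z))$ to make sense (so $T_w(z)\notin E$, i.e.\ $z\notin T_w^{-1}(E)$), which gives precisely the set $S_{w,0}$. On $S_{w,0}$ the map $F_w$ is manifestly holomorphic, and since the outer $T_w$ omits $\pm w$, its image lies in $S_{w,1}$. The chain rule $F_w'(z)=T_w'(T_w(z))\cdot T_w'(z)$ together with the nonvanishing of $T_w'$ shows $F_w$ has no critical points, so it is a local biholomorphism. Surjectivity onto $S_{w,1}$ follows because every $y\in S_{w,1}$ has an infinite preimage set $T_w^{-1}(y)\subset\mathbb{C}\setminus E$, at most two of which can coincide with $\pm w$; choosing any $y'\in T_w^{-1}(y)\setminus\{\pm w\}$ and any $z\in T_w^{-1}(y')$ gives $z\in S_{w,0}$ with $F_w(z)=y$.

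Finally, I would realize the covering property by factoring $F_w$ through the open intermediate Riemann surface $B=\mathbb{C}\setminus(\{\pm w\}\cup E)$. The restriction $T_w|_{S_{w,0}}\colon S_{w,0}\to B$ is a covering because it is obtained from the covering $T_w\colon\mathbb{C}\setminus E\to \mathbb{C}\setminus\{\pm w\}$ by pulling back to the open subset $B$, and composing with $T_w|_B\colon B\to S_{w,1}$ delivers $F_w$ as a holomorphic covering; the infinite degree is preserved since each factor has infinite degree. The main technical point will be in this last step: after removing $\{\pm w\}$ from the intermediate domain, one must check that no genuine covering sheets of $F_w$ are lost above the two special values $\pm T_w(w)\in S_{w,1}$. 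The key observation is that $\pm w$ are themselves asymptotic values of $T_w$ and hence omitted from its image, so any apparently missing sheet of the composition could only have been supplied by preimages of $\pm w$ under $T_w$, which are empty; the fibers of $F_w$ over every point of $S_{w,1}$ are therefore filled out entirely from the remaining branches, and the covering property is preserved.
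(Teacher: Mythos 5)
Your factorization plan through the intermediate surface $B=\mathbb{C}\setminus(\{\pm w\}\cup E)$ is a reasonable starting point, and the first half is correct: $T_w|_{S_{w,0}}\colon S_{w,0}\to B$ is indeed a covering (it is the pullback of the covering $T_w\colon \mathbb{C}\setminus E\to \mathbb{C}\setminus\{\pm w\}$ over the open subset $B$, using that $\pm w$ are omitted values). The difficulty you flag at the end is, however, a genuine gap, and the argument you offer to resolve it does not close it. The second factor $T_w|_B\colon B\to S_{w,1}$ is \emph{not} a covering: here you remove the two points $\pm w$ from the \emph{domain} of a covering, not from the preimage of a subset of the target, so the fibers over $\pm T_w(w)$ each lose exactly one point and even covering fails there. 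Your observation that ``any apparently missing sheet \ldots could only have been supplied by preimages of $\pm w$, which are empty'' establishes that the \emph{fibers} $F_w^{-1}(y)$ are set-theoretically as large as expected for every $y\in S_{w,1}$; it does not establish that a neighborhood of $y_0=T_w(w)$ is evenly covered, which is a statement about local structure, not cardinality.

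To see concretely why it fails: since $w$ is a logarithmic (asymptotic-value) singularity of $T_w$, if $V$ is a small disk around $y_0=T_w(w)$ and $U_0\ni w$ is the component of $T_w^{-1}(V)$ containing $w$, then $T_w^{-1}(U_0\setminus\{w\})$ is a single simply connected \emph{asymptotic tract} $A$ (essentially a lower half-plane) on which $T_w$ restricts to a universal covering of the punctured disk $U_0\setminus\{w\}$. Consequently $A$ is a connected component of $F_w^{-1}(V)$ contained in $S_{w,0}$, and $F_w|_A\colon A\to V\setminus\{y_0\}$ is a universal cover, not a homeomorphism onto $V$; so $V$ is not evenly covered. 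Equivalently, $\pm T_w(w)$ are asymptotic values of $F_w$ along paths $z\to \mp i\infty$, and a topological covering map cannot have an asymptotic value lying in its target. Your plan would work if the target were further shrunk to $\mathbb{C}\setminus\{\pm w,\pm T_w(w)\}$ (the full singular set of $F_w$), which is the step that goes missing. For what it is worth, the paper gives no real proof here, only the remark that $F_w$ ``has asymptotic values at $\{\pm w\}$''; the extra asymptotic values $\pm T_w(w)$ coming from the inner $T_w$ are also overlooked there, so the issue you ran into is not one the paper resolves either.
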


\begin{definition}
Suppose $h (s, z): \Delta_{r}\times P\to \widehat{\mathbb{C}}$ is a holomorphic motion such that $h(s,c_{0})=c_{0}$ for all $s\in \Delta_{r}$ and let $c_{1}(s) =h(s, c_{1})$.  We say another holomorphic motion $\widehat{h} (s, z): \Delta_{r}\times P\to \widehat{\mathbb{C}}$ is a {\em lift of $h$} if 
$\widehat{h}(s,c_{0})=c_{0}$ for all $s\in \Delta_{r}$, and if 
\begin{equation}~\label{lifteq}
h (s, c_{i+1}) = F_{c_{1}(s)} (\widehat{h} (s, c_i)),
\end{equation}
for all $ i=1, \ldots, m-1$ and all $s\in \Delta_{r}$; that is, the following diagram commutes

$$\xymatrix{{\Delta_r \times P} \ar[r]^{\widehat{h}}\ar[d]_{Id \times g} &\CC\ar[d]^{F_{c_{1}(s)}}\\
\Delta_r \times P\ar[r]^{h}& \CC.}$$
 \end{definition}
 
  \begin{definition}\label{transfop}
Suppose $\widehat{h}$ is a lift of the holomorphic motion $h$. If we differentiate the equation
$$
h (s, g(c_{i}))= F_{c_{1}(s)} (\widehat{h} (s, c_{i})).
$$
at $s=0$ for each $i=1, \ldots, m$, we get an $m\times m$-matrix $A$ such that 
\begin{equation}~\label{tranop}
\Big( \frac{\partial\widehat{h}(s, c_{i})}{\partial s}\Big|_{s=0}\Big)=A\Big( \frac{\partial h(s, c_{i})}{\partial s}\Big|_{s=0}\Big).
\end{equation}
The entries in the matrix $A$  depend only  on the partial derivatives of $F(w,z)$  at $s=0$ evaluated at the points of $P$.   

Computing, we have  $A=(a_{i,j})$ 
\begin{eqnarray}
\label{matrixA1} a_{i,1} = \frac{- \frac{\partial F}{\partial w}(c_1, c_i)}{\frac{\partial F}{\partial z}(c_1, c_i)}, \, \, i = 1, \ldots, m-1, \\
\label{matrixA2} a_{i,i+1} = \frac{1}{\frac{\partial F}{\partial z}(c_1, c_i)}, \, \, i = 1, \ldots, m-1, \\
\label{matrixA3} a_{i,j} = 0 \mbox{ otherwise.  }   
 \end{eqnarray}
We call $A$ the {\em Transfer Operator} associated with $g$. 
\end{definition}

The following lifting theorem is the key to  proving transversality for the tangent family. 
 
 \medskip
\begin{theorem}~\label{lift}
Set $W=\mathbb{C}\setminus E$ and suppose $h_{0} (s, z): \Delta\times P\to \widehat{\mathbb{C}}$ 
is a holomorphic motion such that $h_{0}(s,c_{0})=c_{0}$ for all $s\in \Delta$. 
Then we can find  a real number $r>0$, and a sequence of holomorphic motions 
$$
\{ h_{k} (s, z): \Delta_{r}\times P\to \widehat{\mathbb{C}}\}_{k=0}^{\infty},
$$ 
such that for all $k=0,1, \ldots$, $h_{k+1} (s,z)$ 
is a lift of $h_{k}(s,z)$  satisfying  
\begin{itemize}
\item[(i)] $c_{1,k} (s)=h_{k} (s, c_{1})\in W$ and
\item[(ii)] $\sup_{s\in \Delta_{r}, z\in P, k=0, 1, \ldots} |h_{k} (s, z)| < \infty$.
\end{itemize}
\end{theorem}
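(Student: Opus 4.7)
The plan is to construct the sequence $\{h_k\}$ inductively: at each step the covering property from Lemma~\ref{cov} provides the lift, and all lifts are controlled simultaneously by a Schwarz--Pick / hyperbolic metric argument on a disk $\Delta_r$ chosen once and for all at the start.

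First I would shrink the domain of $h_0$ to a disk $\Delta_{r_0}$ on which $c_{1,0}(s)\in W$ and on which $h_0(s,c_i)$ stays in the Euclidean ball $B(c_i, a/3)$ for each $i=1,\dots,m-1$; this is possible because $c_1\notin E$, $W$ is open, and $h_0(0,c_i)=c_i$. Given this base case, $h_{k+1}$ is built from $h_k$ as follows. The lifting equations (\ref{lifteq}) for $i=1,\dots,m-1$ require $h_{k+1}(s,c_i)$ to be a preimage of $h_k(s,c_{i+1})$ under $F_{c_{1,k}(s)}$. Since this map is a covering $S_{c_{1,k}(s),0}\to S_{c_{1,k}(s),1}$ by Lemma~\ref{cov}, and since $c_i$ is not a critical point of $F_{c_1}$ for $i=1,\dots,m-1$ (the flat critical points of $f_{t_0}$ are the poles $\pm\pi/2$, which lie in $P$ only as $c_0$), the implicit function theorem picks out the unique holomorphic branch $h_{k+1}(s,c_i)$ passing through $c_i$ at $s=0$. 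Setting $h_{k+1}(s,c_0)=c_0$ completes the construction. The resulting $h_{k+1}$ fixes the base point, is holomorphic in $s$, and is injective in $z$ on $P$ because distinct branches of $F^{-1}_w$ land at distinct points of $S_{w,0}$.

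For condition (ii), the key analytic fact is that $F_w$, being a covering, is a local isometry in the hyperbolic metrics of $S_{w,0}$ and $S_{w,1}$. Consequently the hyperbolic distance from $h_{k+1}(s,c_i)$ to $c_i$ in $S_{c_{1,k}(s),0}$ equals the hyperbolic distance from $h_k(s,c_{i+1})$ to $c_{i+1}$ in $S_{c_{1,k}(s),1}$. Iterating down to $h_0$ and applying Schwarz--Pick to each $s\mapsto h_0(s,c_j)$ on $\Delta_{r_0}$, one obtains, for $s$ in any strictly smaller disk $\Delta_r$ with $r<r_0$, a uniform bound
$$
d_{\mathrm{hyp}}\bigl(h_k(s,c_i),\,c_i\bigr)\;\leq\;\tfrac{1}{2}\log\tfrac{r_0+r}{r_0-r},
$$
independent of $k$ and $i$. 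Choosing $r$ so small that the hyperbolic ball of this radius around each $c_i$ sits inside $B(c_i,a/3)$ then delivers both (i) and (ii) at once.

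The main obstacle is that the reference surfaces $S_{c_{1,k}(s),0}$ vary with $(s,k)$, so strictly speaking the isometry argument above compares hyperbolic metrics on a family of surfaces rather than on one fixed surface. I would resolve this by a standard bootstrap: for $r$ small all the surfaces $S_{c_{1,k}(s),0}$ are small perturbations of the fixed surface $S_{c_1,0}$, on any compact subset their hyperbolic metrics are uniformly comparable, and the Schwarz--Pick bound above keeps the motions inside such a compact set. This closes the loop once $r$ has been chosen sufficiently small, completing the plan.
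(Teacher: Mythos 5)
Your construction of the lifts is essentially the same as the paper's (though you invoke the implicit function theorem rather than the simple-connectivity/covering-space lifting argument; both work because $F_w$ has no critical points). Your injectivity discussion elides the one nontrivial case the paper singles out: since $h_k(s,g(c_{m-1}))=c_0\in E$, the lift $h_{k+1}(s,c_{m-1})$ is a pre-pole of $F_{c_{1,k}(s)}$, whereas the other $h_{k+1}(s,c_j)$ are not pre-poles, and this is what separates $c_{m-1}$ from the rest. You also never verify condition (i) for $k\geq 1$; the paper gets it for free because every lift lands in $S_{c_{1,k}(s),0}\subset W$ by construction.

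The real problem is your argument for (ii). You assert that because $F_w$ is a covering, and hence a local isometry of the hyperbolic metrics on $S_{w,0}$ and $S_{w,1}$, the hyperbolic distance from $h_{k+1}(s,c_i)$ to $c_i$ \emph{equals} the hyperbolic distance from $h_k(s,c_{i+1})$ to $c_{i+1}$. This is false. A covering map is a local isometry, which only yields the inequality $d_{S_{w,1}}(\pi(p),\pi(q))\leq d_{S_{w,0}}(p,q)$: projecting a path does not increase its length, but a path downstairs need not lift to a path joining the two prescribed preimages upstairs. Lifting can strictly \emph{increase} hyperbolic distance (for example, $e^{iz}:\mathbb{H}\to\Delta^*$ sends $i$ and $i+2\pi$ to the same point). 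So the inequality runs the wrong way for your purposes, and ``iterating down to $h_0$'' gives no control on $h_k$. The subsequent remarks about the reference surfaces $S_{c_{1,k}(s),0}$ varying with $(s,k)$ are a genuine additional concern, but the bootstrap you sketch cannot repair an estimate whose first step already fails.

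The paper's route is structurally different and avoids all of this. It never compares hyperbolic metrics across the varying surfaces $S_{w,0}$; instead it observes that by (i) every map $s\mapsto h_k(s,x)$ for $x\in P$ takes values in the single fixed hyperbolic surface $W=\mathbb{C}\setminus E$, so $\{h_k(\cdot,x)\}_k$ is a normal family. A contradiction argument (if the sup were infinite, a subsequence would converge to a holomorphic limit because it is normalized at $s=0$, hence would be locally bounded, contradiction) then delivers (ii). If you want to keep the hyperbolic-metric flavor, the correct version is a one-line Schwarz--Pick estimate for the fixed surface $W$: since $h_k(0,x)=x$ and $h_k(\cdot,x):\Delta_r\to W$ is holomorphic, $d_W(h_k(s,x),x)\leq d_{\Delta_r}(s,0)$ uniformly in $k$, and a fixed hyperbolic ball about $x$ in $W$ is a compact subset of $\mathbb{C}$. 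That repairs your (ii) without any covering-map isometry or comparison of varying surfaces.
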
 

\begin{proof}
We first prove that  for each $k\geq 0$,  the holomorphic motion $h_{k}(s,z)$ can be lifted.  To define its lift, first set $h_{k+1} (s, c_{0})=c_{0}$.  Next, by injectivity, for any $c_{i}\in P$,  $0<i \leq m-1$, 
$h_{k} (s, g(c_{i})) =h_{k}(s, c_{i+1}) \in S_{c_{1,k}(s),1}$. Since $F_{c_{1}(s)}: S_{c_{1,k}(s), 0}\to S_{c_{1,k}(s),1}$ 
is a holomorphic covering and since $\Delta$ is simply connected, the map $h_{k} (s, g(c_{i})): \Delta \times P \to S_{c_{1,k}(s), 0}$ 
can be lifted to a holomorphic covering map $h_{k+1} (s, c_{i}): \Delta \times P \to S_{c_{1,k+1}(s), 1}$ such that 
$$
 h_{k} (s, g(c_{i}))= F_{c_{1,k}(s)} (h_{k+1} (s, c_{i})) = F(c_{1,k}(s), h_{k+1}(s, c_{i})).
 $$
 We need to check injectivity for $h_{k+1}$.  It is clear that if $0<i\not=j<m-1$, then $h_{k+1} (s, c_{i})\not=h_{k+1} (s, c_{j})$ because $h_{k} (s, c_{i+1}))\not=  h_{k} (s, c_{j+1})$. We only need to check that $h_{k+1} (s, c_{m-1})\not=h_{k+1} (s, c_{j})$, for $j \neq m-1$.   Note that because 
    $h_{k} (s, g(c_{m-1})) =c_{0} \in E$,  $h_{k+1} (s, c_{m-1})$ is a pre-pole of $F_{c_{1,k}(s)}(z)$  and this pre-pole
   depends holomorphically on $s\in \Delta$.  Since $h_k(s, g(c_j)) \notin E$ for any $j \neq m-1$, the corresponding 
     $h_{k+1} (s, c_{j})$ cannot be a pre-pole and so is different from     $h_{k+1} (s, c_{m-1})$.  
 Thus $h_{k+1}(s,z): \Delta\times P\to \widehat{\mathbb{C}}$ defines a holomorphic motion which is 
 a lift of $h_{k}(s,z): \Delta\times P\to \widehat{\mathbb{C}}$.
 Since $h_{k} (s,c_{i}) \in S_{c_{1,k}(s), 0}\subset W$ for all $i=1, 2, \cdots, m-1$, we have $(i)$. 
 
Now we prove $(ii)$ by   contradiction. Suppose that for every $r<1$, 
$$
\sup_{s\in \Delta_{r}, z\in P, k=0, 1, \cdots} |h_{k} (s, z)| = \infty.
$$  
Since $P$ contains only finitely many  points,  this assumption implies that for some fixed $x\in P$ there is a  sequence $\{k_{n}\}$ of integers and a sequence of complex numbers $s_{n}\in \Delta$ such that $h_{k_{n}}(s_{n}, x) \to \infty$ as $n\to \infty$. 
Since $W$ is a hyperbolic Riemann surface and since the holomorphic functions $h_{k} (s, z) $ take values in $W$ for all $s\in \Delta$ and all $k\geq 1$, 
it follows  that $\{ h_{k}(s, x) \}$ is a normal family for $s\in \Delta$;  thus the sequence $h_{k_{n}} (s, x)$ has a subsequence, which  we again  denote by $h_{k_n}$,  that converges to a holomorphic function $h(s)$ or to the constant $\infty$. Since $h_{k_{n}} (0,x)=x$ for all $n$,  $h_{k_{n}} (s, x)$ must converge to a holomorphic function $h(s)$ in $\Delta$;  therefore, for $n$ sufficiently large, $h_{k_{n}} (s, x)$ is  bounded on compact subset of $\Delta$,  contradicting our assumption.  Therefore,    for every  $0<r<1$, there is  an $M(r)>0$, such that for all $s \in \Delta_r$ and all $z \in P$, $(ii)$ holds; that is, 
$$
|h_{k} (s, z)|\leq M, \quad \forall\; s\in \Delta_{r}, \;\; \forall\; z\in P.
$$
\end{proof}    

 \subsection{The Spectral Radius of the Transfer Operator.}
 
\begin{lemma}
The spectral radius of $A$ is less than or equal to $1$.
\end{lemma}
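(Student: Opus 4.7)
The plan is to use the lifting theorem (Theorem~\ref{lift}) together with Cauchy's estimate to show that for every eigenvalue $\lambda$ of $A$, there is an eigenvector $u \neq 0$ so that the iterates $A^k u = \lambda^k u$ remain bounded in $k$; this forces $|\lambda| \leq 1$ and hence $\rho(A) \leq 1$.

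First, given an eigenvector $u = (u_1, \ldots, u_{m-1})$ of $A$ with eigenvalue $\lambda$, I would construct an explicit holomorphic motion $h_0 : \Delta \times P \to \widehat{\mathbb{C}}$ whose initial velocity at $s = 0$ realizes $u$. Setting $h_0(s, c_0) = c_0$ and $h_0(s, c_i) = c_i + s u_i$ for $i = 1, \ldots, m-1$ works: since the points $c_0, c_1, \ldots, c_{m-1}$ are distinct, the map $h_0(s, \cdot)$ is injective on $P$ for all sufficiently small $|s|$, and after rescaling $s$ if needed to shrink the parameter disk back to $\Delta$, one obtains a bona fide holomorphic motion with $h_0(0, c_i) = c_i$ and $\partial_s h_0(s, c_i)|_{s=0} = u_i$. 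Since $c_1 \notin E$, the condition $c_{1,0}(s) = h_0(s, c_1) \in W$ holds automatically after this rescaling.

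Next, apply Theorem~\ref{lift} to produce a sequence $\{h_k\}_{k=0}^\infty$ of holomorphic motions over a common disk $\Delta_r$, with each $h_{k+1}$ a lift of $h_k$, and with the uniform bound
\[
\sup_{k \geq 0,\; s \in \Delta_r,\; z \in P} |h_k(s,z)| \leq M.
\]
By the very definition of $A$ in equation (\ref{tranop}), the transition $h_k \mapsto h_{k+1}$ acts on the vector of initial velocities by multiplication by $A$; inductively, if $v^{(k)} = \bigl(\partial_s h_k(s, c_i)|_{s=0}\bigr)_{i}$, then $v^{(k)} = A^k v^{(0)} = A^k u$.

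Finally, apply Cauchy's estimate on $\Delta_r$: for each fixed $i$, the map $s \mapsto h_k(s, c_i)$ is holomorphic and bounded by $M$ on $\Delta_r$, so
\[
\bigl| \partial_s h_k(s, c_i)\big|_{s=0} \bigr| \;\leq\; \frac{M}{r}.
\]
Hence $\|A^k u\| \leq (m-1) M/r$ for every $k \geq 0$, with a bound independent of $k$. Combining this with $A^k u = \lambda^k u$ gives $|\lambda|^k \|u\| \leq (m-1) M/r$ for all $k$; since $\|u\| \neq 0$, this forces $|\lambda| \leq 1$. As $\lambda$ was an arbitrary eigenvalue of $A$, the spectral radius is at most $1$.

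The main obstacle is not conceptual but careful bookkeeping: one must verify that the linear motion $h_0$ is injective as $z$ ranges over $P$ simultaneously with $s$ ranging over $\Delta$ (handled by rescaling, using finiteness and distinctness of the points of $P$), and that the identification $v^{(k)} = A^k u$ survives the iteration — which follows because at $s=0$ every $h_k$ fixes $P$ pointwise, so the matrix entries in (\ref{matrixA1})--(\ref{matrixA3}), being evaluations of $\partial F/\partial w$ and $\partial F/\partial z$ at the fixed points $(c_1, c_i)$, are the same for every $k$.
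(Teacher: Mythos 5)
Your proof is correct and takes essentially the same approach as the paper: construct a linear holomorphic motion of $P$ with prescribed initial velocity, apply Theorem~\ref{lift} to obtain a uniformly bounded sequence of lifts, observe that the lift iteration multiplies initial velocities by $A$, and conclude via Cauchy's estimate that the iterates stay bounded. The only small divergence is that you seed the motion with an eigenvector and conclude $|\lambda|\leq 1$ directly for each eigenvalue, whereas the paper seeds with an arbitrary vector of small sup-norm (bounded by $a/3$ so injectivity of $h_0$ is automatic) and thereby bounds $\|A^k\|$ uniformly, then invokes the Gelfand formula $\rho=\lim_k\|A^k\|^{1/k}$; the two routes are logically equivalent.
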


\begin{proof} 
Let $\mathbf{v_0}=(v_{0,1}, \ldots, v_{0,m}=v_{0,0}=0)$ be a vector in $\mathbb{R}^{m}$ such that $|v_{0,i}| \leq a/3$.
Define a holomorphic motion of $P$ over $\Delta$ by $h_{0} (s, c_i)=x+s v_{0,i}$.  Note we have $h_0(s,c_0)=c_0$.   The condition on $|v_i| $ ensures injectivity.   By  Theorem~\ref{lift}, given $r<1$, we can find a sequence of holomorphic motions $h_k$ of $P$ over $\Delta_r$ and a constant $M$ such that $\sup_{s\in \Delta_{r}, z\in P, k=0, 1, \ldots} |h_{k} (s, z)| < M$.

Since by definition,  $v_{0,i}=\frac{\partial h_0(s,c_i)}{\partial s}|_{s=0}$, inductively applying 
the transfer operator $A$,  we obtain a sequence of vectors  ${\mathbf v}_{k+1}=A {\mathbf v}_k = A^k \mathbf{v_0}$.  
By the boundedness in part $(ii)$ of Theorem~\ref{lift} and  Cauchy's Theorem,  there is a  constant $M>0$ such that 
$$
| v_{k,i}|\leq M, \quad \forall\; k>0,
$$
and hence 
\[ \| A^k {\mathbf v}_k \| \leq M, \quad \forall\; k>0.  \]

This implies that $\|A^{k}\| \leq (3M)/a$ so that  the spectral radius 
$$
\rho=\lim_{k\to \infty} \sqrt[k]{\|A^{k}\|} \leq   1.
$$
 \end{proof}

 \subsection{Non-transversality and the spectral radius of $A$}

 We saw above that the spectral radius, or largest eigenvalue of $A$ has a maximum value of $1$.   Here we show that achieving  this maximum is equivalent to non-transverality at $t_0$, that is $\Phi'(t_0)=0$ where 
$\Phi (t)$ is the function defined in (\ref{traneq}) extended as a function on $\mathbb C$. 

\medskip
\begin{lemma}~\label{eig1}
$\Phi' (t_{0}) =0$ if and only if $1$ is   an eigenvalue of $A$.
\end{lemma}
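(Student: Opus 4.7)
The plan is to reduce the equivalence to a chain-rule identification between the derivatives of the iterated orbit of the parameter at $t_0$ and the components of a candidate $1$-eigenvector of the transfer operator $A$.

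I introduce the parameterized orbit $\tilde{c}_i(t) := f_t^{i-1}(t)$ for $i = 1, \ldots, m$, so that $\tilde{c}_i(t_0) = c_i$ (with the convention $c_m := c_0$), $\tilde{c}_1(t) = t$, and $\Phi(t) = \tilde{c}_m(t) - c_0$. Writing $\tilde{c}_{i+1}(t) = F(t, \tilde{c}_i(t))$ and differentiating at $t_0$, I set $u_i := \tilde{c}_i'(t_0)$ to obtain the recursion
\[ u_{i+1} = \frac{\partial F}{\partial w}(c_1, c_i) + \frac{\partial F}{\partial z}(c_1, c_i)\, u_i, \qquad u_1 = 1, \]
for $i = 1, \ldots, m-1$. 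In particular, $u_m = \tilde{c}_m'(t_0) = \Phi'(t_0)$. The factor $\tfrac{\partial F}{\partial z}(c_1, c_i) = f_{t_0}'(c_i)$ is nonzero for $i = 1, \ldots, m-1$ because these orbit points avoid the flat critical points $\pm \pi/2$, so the recursion is invertible.

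Dividing by $\tfrac{\partial F}{\partial z}(c_1, c_i)$ and comparing with (\ref{matrixA1})--(\ref{matrixA3}) gives $u_i = a_{i,1} u_1 + a_{i,i+1} u_{i+1}$ for $i = 1, \ldots, m-1$, which is exactly the $i$-th row of the equation $A \mathbf{u} = \mathbf{u}$ for $\mathbf{u} = (u_1, \ldots, u_m)$. The $m$-th row of $A$ is zero by construction (since $c_m = c_0$ is held fixed by every admissible holomorphic motion), so $(A \mathbf{v})_m = v_m$ forces $v_m = 0$ for any $1$-eigenvector $\mathbf{v}$.

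Both directions of the lemma now follow. If $\Phi'(t_0) = 0$, then $u_m = 0$ and the vector $(1, u_2, \ldots, u_{m-1}, 0)$ is a nonzero $1$-eigenvector of $A$. Conversely, suppose $A\mathbf{v} = \mathbf{v}$ with $\mathbf{v} \neq 0$. A cascade argument on the sparsity pattern of $A$ shows $v_1 \neq 0$: if $v_1 = 0$, then $(A\mathbf{v})_1 = a_{1,2} v_2 = 0$ gives $v_2 = 0$ (since $a_{1,2} \neq 0$), and inductively $v_i = 0$ for all $i$, a contradiction. After rescaling so that $v_1 = 1$, the recursion forces $v_i = u_i$ for all $i$, and the condition $v_m = 0$ coming from the $m$-th row yields $\Phi'(t_0) = u_m = 0$.

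The main delicacy is the boundary handling at index $i = m$: because $c_m = c_0 = \pm \pi/2$ is a pole of $f_{t_0}$, the derivative $\tilde{c}_m'(t_0) = \Phi'(t_0)$ must be read as the appropriate one-sided limit, which is well-defined by the analysis of $f_t$ near its poles carried out in Section~\ref{basic}. The vanishing of the $m$-th row of $A$, which encodes $h(s, c_0) \equiv c_0$ in the holomorphic-motion framework, is precisely the feature that converts the spectral condition $1 \in \sigma(A)$ into the differential condition $\Phi'(t_0) = 0$.
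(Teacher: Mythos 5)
Your proof is correct, and it takes a more elementary route than the paper's. The paper encodes the candidate eigenvector as a first-order holomorphic motion $h(s,c_i)=c_i+sv_i$, lifts it through the covering $F_{c_1(s)}$, and reads off $\Phi(c_1(s))={\rm O}(|s|^2)$ from the asymptotic invariance to conclude $\Phi'(t_0)v_1=0$; for the converse it builds an eigenvector from the derivatives of the parametrized orbit, which is essentially your $u_i$. You, by contrast, bypass the motion/lifting formalism entirely: the single observation that $u_i=\tilde c_i'(t_0)$ satisfies $u_i=a_{i,1}u_1+a_{i,i+1}u_{i+1}$ for $i=1,\dots,m-1$ (pure chain rule plus the formulas \eqref{matrixA1}--\eqref{matrixA3}) shows that $A\mathbf{u}$ and $\mathbf{u}$ agree in all but the zero $m$-th row, so $A\mathbf{u}=\mathbf{u}$ iff $u_m=\Phi'(t_0)=0$, and your cascade argument then shows any $1$-eigenvector is a scalar multiple of $\mathbf{u}$. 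This linearization gives both directions of the equivalence at once. A small bonus: your cascade, starting from the hypothesis $v_1=0$, is actually more careful than the paper's phrasing (``$v_i=0$ implies $v_{i+1}=0$'', which without assuming $v_1=0$ would require $a_{i,1}=0$); what is really needed, and what you prove, is only $v_1\neq 0$. The one blemish is the closing remark about one-sided limits: $\tilde c_m(t)=f_t^{m-1}(t)$ is an ordinary analytic function of $t$ near $t_0$ because $\tilde c_{m-1}(t_0)=c_{m-1}$ is not a pole, so $\Phi'(t_0)=\tilde c_m'(t_0)$ is a genuine derivative and no directional limit is involved at this step (the directional choice enters only in fixing the sign of $c_0$). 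This does not affect the correctness of the argument.
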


\begin{proof}
{\bf Proof of the  ``if'' statement}:  Suppose $1$ is   an eigenvalue of $A$.  This means that there is  a non-zero vector ${\mathbf v} =(v_{i})\in \mathbb{R}^{m}$ with   $\max_i |v_i | =a/3$ such that $A{\mathbf v}={\mathbf v}$. Define a holomorphic motion of $P$ over $\Delta$ by $h(s,c_{i}) =c_{i}+s v_{i}$. Suppose $\widehat{h}(s,c_{i})$ is a lift of $h$. Since $A{\mathbf v}={\mathbf v}$, we have  
$$
\widehat{h}(s,c_{i})=h(s, c_{i}) +{\rm O}(|s|^{2})= c_{i}+s v_{i}+{\rm O}(|s|^{2}).
$$
From  equation (\ref{lifteq}), for $i=1, \ldots, m-2$, we obtain
$$
F_{c_{1}(s)} (\widehat{h}(s, c_{i})) = h(s, c_{i+1}) +{\rm O}(|s|^{2}), 
$$
or equivalently, 
$$
F_{c_{1}(s)} (c_{i}+sv_{i}) = c_{i+1}+sv_{i+1} +{\rm O}(|s|^{2}).
$$
For $i=m-1$ we have
$$
F_{c_{1}(s)} (h (s, c_{m-1})) = h(s, c_{0}) +{\rm O}(|s|^{2}),
$$ 
or equivalently, 
$$
F_{c_{1}(s)} (c_{m-1}+sv_{m-1}) = c_{0} +{\rm O}(|s|^{2}).
$$ 
Using  equations~(\ref{matrixA1})--(\ref{matrixA3}), we see that $v_m=v_{0}=0$ and if  $i=1, 2, \cdots, m-1$,  then $v_i=0$ implies that $v_{i+1}=0$.  Thus,  since $\mathbf{v} \neq 0$,    
  $v_{i}\not=0$ for $i=1, 2, \cdots, m-1$.   Moreover,  from the above we conclude that 
$$
\Phi (c_{1}(s)) = F_{c_{1}(s)}^{m-1} (c_{1}(s))-  c_{0}  = {\rm O}(|s|^{2}), 
$$
and therefore that 
$
\Phi' (c_{1}(0)) c_{1}'(0)= \Phi' (c_{1}(0)) v_{1}=0.
$
 Since  $c_{1}(0)=t_{0}$,  this says $\Phi'(t_{0})=0$ as claimed.

{\bf Proof of the ``only if'' statement}:  This is relatively  easier.  Since we assume  $\Phi'(t_{0}) =\Phi' (c_{1})=0$, we have a $v_{1}\not=0$ with $|v_{1}|\leq a/3$, such that $\Phi (c_{1}+sv_{1}) ={\rm O}(|s|^{2})$.  Let $c_{1}(s) =c_{1} +sv_{1}$,  let  $v_m=v_{0}=0$ and  for $i=2, \cdots, m-1$ define 
$$
v_{i} = \frac{d F_{c_{1}(s)} (c_{i}(s))}{ds} \big|_{s=0}.
$$ 
  This is  a non-zero vector ${\mathbf v} =(v_{i})\in \mathbb{R}^{m-1}$ such that $A{\mathbf v}={\mathbf v}$. Therefore $1$ is an eigenvalue of $A$ as claimed. 
\end{proof} 

\subsection{Asymptotic invariance of lifts holomorphic motions.}

\begin{definition}
A holomorphic motion $h$ of $P$ over  $\Delta_{r}$  is called {\em asymptotically invariant of order $l$ }if there is a lift 
$\widehat{h}$ such that
$$
\widehat{h}(s,z)-h(s,z)= {\rm O}(|s|^{l+1})\quad \forall \; s\in \Delta_{r}, \; z\in P.
$$
\end{definition}
The following lemma is standard calculus.  We write out the proof to establish the notation for the proof of the following lemma. 

\begin{lemma}\label{diff}
Suppose $F(w, z): \mathbb{C}^2\to \mathbb{C}$ is holomorphic and $\sigma(s), \tau(s): \Delta_{r}\to \mathbb{C}^2$  are two holomorphic maps such that $\sigma(0)=\tau(0)=(a,b)$. Suppose further that for some $l>0$, 
$$
\sigma(s)-\tau(s) ={\rm O}(|s|^{l}).
$$
Then, writing $\sigma(s)=(\sigma_1(s), \sigma_2(s))$ and $\tau(s)=(\tau_1(s), \tau_2(s))$, we have
$$
F(\sigma(s))-F(\tau(s)) =\frac{\partial F}{\partial w}(a,b)(\sigma_1(s)-\tau_1(s))+ \frac{\partial F}{\partial z }(a,b)(\sigma_2(s)-\tau_2(s))+{\rm O}(|s|^{l+1}).
$$
\end{lemma}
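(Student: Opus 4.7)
The plan is to reduce the lemma to a first-order Taylor expansion of $F$ along the straight-line segment joining $\tau(s)$ to $\sigma(s)$, and then use the hypothesis that $\sigma(s)-\tau(s) = O(|s|^{l})$ together with the continuity of the partial derivatives of $F$ to extract the error bound.

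First I would express $F(\sigma(s)) - F(\tau(s))$ as a line integral via the fundamental theorem of calculus. Setting $\gamma_u(s) = \tau(s) + u(\sigma(s)-\tau(s))$ for $u\in [0,1]$, we have
\[
F(\sigma(s)) - F(\tau(s)) \;=\; \int_0^1 \frac{d}{du} F(\gamma_u(s))\,du
\;=\; \int_0^1\!\bigl[\,F_w(\gamma_u(s))(\sigma_1(s)-\tau_1(s)) + F_z(\gamma_u(s))(\sigma_2(s)-\tau_2(s))\,\bigr]\,du,
\]
where $F_w = \partial F/\partial w$ and $F_z = \partial F/\partial z$. Since $F$ is holomorphic on $\mathbb{C}^2$, so are $F_w$ and $F_z$; in particular they are continuous at $(a,b)$.

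Next I would use the fact that $\sigma(0) = \tau(0) = (a,b)$ and that $\sigma,\tau$ are holomorphic on $\Delta_r$ to conclude that both $\sigma(s) - (a,b)$ and $\tau(s) - (a,b)$ are $O(|s|)$; consequently $\gamma_u(s) - (a,b) = O(|s|)$ uniformly in $u\in [0,1]$. Then the continuity of $F_w$ and $F_z$ gives
\[
F_w(\gamma_u(s)) = F_w(a,b) + O(|s|), \qquad F_z(\gamma_u(s)) = F_z(a,b) + O(|s|),
\]
uniformly in $u \in [0,1]$. Substituting these expansions into the integrand and using $\sigma_i(s)-\tau_i(s) = O(|s|^l)$, each error term becomes $O(|s|)\cdot O(|s|^l) = O(|s|^{l+1})$. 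Since the bound is uniform in $u$, integrating over $u \in [0,1]$ preserves the order, yielding
\[
F(\sigma(s)) - F(\tau(s)) = F_w(a,b)(\sigma_1(s)-\tau_1(s)) + F_z(a,b)(\sigma_2(s)-\tau_2(s)) + O(|s|^{l+1}),
\]
which is precisely the assertion.

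As the lemma is classified as a standard calculus fact in the paper, I do not anticipate a genuine obstacle; the only thing to be careful about is ensuring that the implicit constants in the big-$O$ bounds are uniform on a small enough disk $\Delta_{r'} \subset \Delta_r$, which follows from the continuity of $F_w, F_z$ on a neighborhood of $(a,b)$ and the holomorphy of $\sigma,\tau$ on $\Delta_r$.
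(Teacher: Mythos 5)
Your approach is correct but genuinely different from the paper's. The paper writes out the Taylor expansion of $F$ about $(a,b)$ explicitly up to second order, substitutes $\sigma(s)$ and $\tau(s)$, and observes that when the two expansions are subtracted each second-order term factors as something $\mathrm{O}(|s|)$ times something $\mathrm{O}(|s|^{l})$. You instead use the line-integral form of the mean value theorem along the segment from $\tau(s)$ to $\sigma(s)$ and then expand the partial derivatives $F_w,F_z$ to first order. Both routes hinge on the same algebra (a product $\mathrm{O}(|s|)\cdot \mathrm{O}(|s|^{l})$); your version avoids writing out the quadratic Taylor terms, while the paper's version is more self-contained for readers who prefer an explicit power-series computation.

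One small imprecision: you derive $F_w(\gamma_u(s))=F_w(a,b)+\mathrm{O}(|s|)$ from ``the continuity of $F_w$,'' but continuity alone would only give $o(1)$, which after multiplying by $\sigma_i(s)-\tau_i(s)=\mathrm{O}(|s|^{l})$ would yield $o(|s|^{l})$, strictly weaker than the claimed $\mathrm{O}(|s|^{l+1})$. What you actually need is that $F_w$ and $F_z$ are locally Lipschitz (equivalently, $C^1$) near $(a,b)$, which is of course automatic since they are holomorphic; you already noted their holomorphy, so the fix is just a matter of invoking the right property. With that wording corrected, the argument is complete.
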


\begin{proof}    The Taylor series for $F(w,z)$ is 

\begin{multline*}
F(w,z)-F((a,b))=\frac{\partial F}{\partial w}(a, b) (w-a)+ \frac{\partial F}{\partial z}(a, b) (z-b)+\frac{\partial^2 F}{\partial w^2}(a, b) (w-a)^2\\
+\frac{\partial^2 F}{\partial w\partial z}(a, b) (w-a)(z-b)+\frac{\partial^2 F}{\partial z^2}(a, b) (z-b)^2+ \text{higher order terms.}
\end{multline*}
Substituting    first   $\sigma(s)=(\sigma_1(s),\sigma_2(s))$ and then  $\tau(s)=(\tau_1(s),\tau_2(s))$ for $(w,z)$ in the Taylor series  and subtracting, we obtain 
\begin{multline*}
F(\sigma(s))-F(\tau(s))=\frac{\partial F}{\partial w}(a,b)(\sigma_1(s)-\tau_1(s))+ \frac{\partial F}{\partial z}(a,b)(\sigma_2(s)-\tau_2(s))\\
+\frac{\partial^2 F}{\partial w^2}(a,b) (\sigma_1(s)+\tau_1(s)-2a)(\sigma_1(s)-\tau_1(s))\\
+\frac{\partial^2 F}{\partial w\partial z}(a, b) [(\sigma_1(s)-a)(\sigma_2(s)-\tau_2(s))+(\tau_1(s)-b)(\sigma_1(s)-\tau_1(s))]\\
+\frac{\partial^2 F}{\partial z^2}(a,b) (\sigma_2(s)+\tau_2(s)-2b)(\sigma_2(s)-\tau_2(s))+ \text{higher order terms}\\
=\frac{\partial F}{\partial w}(a,b)(\sigma_1(s)-\tau_1(s))+ \frac{\partial F}{\partial z}(a,b)(\sigma_2(s)-\tau_2(s))+{\rm O}(|s|^{l+1})
\end{multline*}
\end{proof}

 The next lemma implies that the asymptotic order of a holomorphic motion is preserved under lifting; that is, if $h$ is holomorphic motion of asymptotically invariant of order $l$, so is its lift $\widehat{h}$.  Below, we will apply it to the sequence of lifts $\{h_k\}$ of the holomorphic motion $h_0$:  it will show that if  
  $h_{k+1}$ is a lift of $h_k$ with  $h_1- h_0={\rm O}(|s|^{l+1})$, then $h_{k+1}-h_k={\rm O}(|s|^{l+1})$. 
   
\begin{lemma}\label{liftorder}
Suppose that for some $l>0$, $h_0$ and $h_1$ are holomorphic motions of $\Delta_r \times P$ over $\mathbb C$ satisfying $$h_0(s,c_{i})-h_1(s,c_{i})={\rm O}(|s|^l).$$  If $\widehat{h}_0$ $\widehat{h}_1$ are respective lifts of the motions, then $$\widehat{h}_0(s,c_{i})-\widehat{h}_1(s,c_{i})={\rm O}(|s|^l).$$ 
\end{lemma}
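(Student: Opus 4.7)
The plan is to use the lift equation (\ref{lifteq}) together with Lemma \ref{diff} to relate the asymptotic order of $\widehat{h}_0 - \widehat{h}_1$ at each $c_i$ to that of $h_0 - h_1$ at $c_{i+1}$ and at $c_1$, then bootstrap on the vanishing order. No induction on $i$ is needed; each index can be handled independently. For brevity I write $F_w, F_z$ for the partial derivatives $\partial F/\partial w$ and $\partial F/\partial z$.

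Fix $i \in \{1, \ldots, m-1\}$, set $c_{1,k}(s) = h_k(s, c_1)$, and write the defining lift relations for $k = 0, 1$,
\[ h_k(s, c_{i+1}) = F\bigl(c_{1,k}(s),\; \widehat{h}_k(s, c_i)\bigr), \]
with the convention $c_m = c_0$ and $h_k(s, c_0) \equiv c_0$, so that the case $i = m-1$ has both sides identically $c_0$. From the hypothesis $h_0 - h_1 = O(|s|^l)$ on $P$ we get, in particular, $c_{1,0}(s) - c_{1,1}(s) = O(|s|^l)$. Suppose provisionally that $\widehat{h}_0(s, c_i) - \widehat{h}_1(s, c_i) = O(|s|^j)$ for some $j \geq 1$. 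Applying Lemma \ref{diff} with $\sigma(s) = (c_{1,0}(s), \widehat{h}_0(s, c_i))$, $\tau(s) = (c_{1,1}(s), \widehat{h}_1(s, c_i))$, and base point $(c_1, c_i)$ gives
\[ h_0(s, c_{i+1}) - h_1(s, c_{i+1}) = F_w(c_1, c_i)\bigl(c_{1,0} - c_{1,1}\bigr) + F_z(c_1, c_i)\bigl(\widehat{h}_0(s, c_i) - \widehat{h}_1(s, c_i)\bigr) + O(|s|^{\min(j,l)+1}). \]

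Using $F(w, z) = -w \tanh(w \tan z)$ one computes $F_z(c_1, c_i) = -t_0^2 \sec^2(c_i)\,\mathrm{sech}^2(t_0 \tan c_i) \neq 0$, so we can divide through and isolate the lift difference,
\[ \widehat{h}_0(s, c_i) - \widehat{h}_1(s, c_i) = \frac{1}{F_z(c_1, c_i)}\Bigl[(h_0 - h_1)(s, c_{i+1}) - F_w(c_1, c_i)\bigl(c_{1,0} - c_{1,1}\bigr)\Bigr] + O(|s|^{\min(j,l)+1}). \]
The bracketed quantity is $O(|s|^l)$ by hypothesis, so the right-hand side is $O(|s|^l) + O(|s|^{j+1})$, which equals $O(|s|^{j+1})$ whenever $j < l$. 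Since $\widehat{h}_0(\cdot, c_i)$ and $\widehat{h}_1(\cdot, c_i)$ are both holomorphic on $\Delta_r$ and agree at $s = 0$, we may start the bootstrap with $j = 1$; each iteration of the identity above improves $j$ by one, and after finitely many steps we reach $j \geq l$, which is the desired estimate.

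The main obstacle is verifying the non-vanishing of $F_z(c_1, c_i)$ for every $i$. Since $c_0 = \pm \pi/2$ is a pole of $\tan$, one must check that none of the intermediate points $c_1, \ldots, c_{m-1}$ is a pole of $\tan$. For $i \leq m-2$ this is automatic: $c_i$ lies in the finite real forward orbit of $t_0$ under $f_{t_0}$, which by the definition of a virtual cycle parameter of order $m$ avoids poles of $\tan$ except at the terminal step. For $i = m-1$, the identity $\tanh(t_0 \tan c_{m-1}) = \mp \pi/(2 t_0)$ (coming from $f_{t_0}(c_{m-1}) = \pm \pi/2$) forces $\tan c_{m-1}$ to be finite, and moreover $\mathrm{sech}^2(t_0 \tan c_{m-1}) = 1 - \pi^2/(4 t_0^2) > 0$ when $t_0 > \pi/2$, so $F_z(c_1, c_{m-1}) \neq 0$ as well. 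Apart from this verification, the argument is routine Taylor expansion.
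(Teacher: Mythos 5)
Your proof is correct and, frankly, more rigorous than the paper's. Both you and the paper proceed by subtracting the lift relations $h_k(s,c_{i+1}) = F(c_{1,k}(s), \widehat{h}_k(s,c_i))$ for $k=0,1$ and expanding via Lemma~\ref{diff}. The paper, however, directly writes the error term as ${\rm O}(|s|^{l+1})$, which via Lemma~\ref{diff} requires knowing $\widehat{h}_0(s,c_i)-\widehat{h}_1(s,c_i)={\rm O}(|s|^l)$ in advance---precisely the conclusion being sought. You correctly identify this gap and close it with a bootstrap: a priori $\widehat{h}_0-\widehat{h}_1={\rm O}(|s|)$ (both equal $c_i$ at $s=0$), the isolated identity yields ${\rm O}(|s|^{\min(j,l)+1})$, and each pass improves $j$ by one until $j\ge l$. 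You also make explicit the non-vanishing of $\partial F/\partial z$ at the base point $(c_1,c_i)$, which both proofs need in order to divide and isolate $\widehat{h}_0-\widehat{h}_1$. Your verification is sound: it reduces to the fact that $c_i\notin E$ for $1\le i\le m-1$ (the orbit first hits a pole only at step $m-1$), so $\tan c_i$ is finite and both $\sec^2 c_i$ and $\sech^2(t_0\tan c_i)$ are nonzero; one could alternatively just invoke Lemma~\ref{cov}, since a holomorphic covering map has nowhere-vanishing derivative. In short, same idea as the paper, but you supply two small but genuinely needed details.
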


\begin{proof} We use superscripts to denote the functions $c_1(s)$ for each of the motions: 
 $c_1^0(s)=h_0(s,c_1)$ and $c_1^1(s)=h_1(s,c_1)$.  
 By hypothesis,  $c_1^0(s)-c_1^1(s)={\rm O}(|s|^l)$. Since $\widehat{h}_0$ and $\widehat{h}_1$ are lifts of $h_0$ and $h_1$, we have $$h_0(s,c_{i+1})=F_{c_1^0(s)}( \widehat{h}_0(s,c_i))\ \text{and}\  h_1(s,c_{i+1})=F_{c_1^1(s)}(\widehat{h}_1(s,c_i)).$$  Subtracting and applying Lemma \ref{diff}  with $F(w,z)=F_w(z)$, we get
\begin{multline*}
h_0(s,c_{i+1})-h_1(s,c_{i+1})=F_{c_1^0(s)}( \widehat{h}_0(s,c_i))-F_{c_1^1(s)}( \widehat{h}_1(s,c_i))\\
=\frac{\partial F}{\partial w}\big|_{s=0}(c_1^0(s)-c_1^1(s))+\frac{\partial F}{\partial z}\big|_{s=0}(\widehat{h}_0(s,c_i)-\widehat{h}_1(s,c_i))+{\rm O}(|s|^{l+1})
\end{multline*}

Therefore $\widehat{h}_0(s,c_i)-\widehat{h}_1(s,c_i)={\rm O}(|s|^{l})$.
\end{proof}

The following lemma gives the construction of a new holomorphic motion $H$ from the sequence of lifts, $\{h_k\}$, of a given motion $h_0$ of asymptotic order $l$, that is asymptotically invariant of   order $l+1$.  

\begin{lemma}\label{increaseorder}
Suppose that for some $r>0$ and $l\geq 1$, we have an asymptotically invariant holomorphic motion $h$ of $P$ over $\Delta_{r}$  of order $l$.
 Then we can construct another holomorphic motion $H$ of $P$ over $\Delta_{r'}$ for some $r'>0$ which is asymptotically invariant of order $l+1$.
\end{lemma}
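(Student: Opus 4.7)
The plan is to take $H$ as a perturbation of $h$ at exactly the order $s^{l+1}$ where the non-invariance of $h$ first appears, with the perturbation chosen to cancel the leading defect. Write the defect as
\[
\widehat{h}(s,c_i)-h(s,c_i)=s^{l+1}v_i+O(|s|^{l+2}),
\]
so that $\mathbf{v}=(v_1,\ldots,v_{m-1})\in\mathbb{C}^{m-1}$ (with $v_0=v_m=0$ since $c_0$ is fixed by every motion and its lift). Set, provisionally, $H(s,c_0)=c_0$ and $H(s,c_i)=h(s,c_i)+s^{l+1}w_i$ for $1\le i\le m-1$, with $\mathbf{w}$ to be determined. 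Applying Lemma~\ref{diff} to the two defining relations $F_{c_1^H(s)}(\widehat{H}(s,c_i))=H(s,c_{i+1})$ and $F_{c_1(s)}(\widehat{h}(s,c_i))=h(s,c_{i+1})$ and using the formulas~(\ref{matrixA1})--(\ref{matrixA3}) for the transfer operator $A$ yields
\[
\widehat{H}(s,c_i)-\widehat{h}(s,c_i)=s^{l+1}(A\mathbf{w})_i+O(|s|^{l+2}),
\]
and hence
\[
\widehat{H}(s,c_i)-H(s,c_i)=s^{l+1}\bigl(v_i+((A-I)\mathbf{w})_i\bigr)+O(|s|^{l+2}).
\]
Thus $H$ will be asymptotically invariant of order $l+1$ precisely when $\mathbf{w}$ solves the linear system $(I-A)\mathbf{w}=\mathbf{v}$.

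The heart of the proof is the solvability of this system. Apply Theorem~\ref{lift} starting from $h$ to produce a uniformly bounded tower of iterated lifts $h_0=h$, $h_{k+1}=\widehat{h}_k$ on a common disk $\Delta_{r_0}$. Iterating Lemma~\ref{liftorder} gives $h_{k+1}-h_k=O(|s|^{l+1})$ for every $k$, and the same linearization used above identifies the $s^{l+1}$ coefficient of $h_{k+1}-h_k$ with $A^k\mathbf{v}$. Consequently the $s^{l+1}$ coefficient of $h_k-h_0$ equals the partial sum $S_k\mathbf{v}:=(I+A+\cdots+A^{k-1})\mathbf{v}$. Cauchy's integral formula, applied on a disk slightly smaller than $\Delta_{r_0}$, extracts this coefficient from the uniformly bounded family $\{h_k-h_0\}$, so $\{S_k\mathbf{v}\}$ is a bounded sequence in $\mathbb{C}^{m-1}$. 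Using the Jordan normal form of $A$ together with the spectral bound $\rho(A)\le 1$ established earlier in the section, the operators $S_k$ remain bounded on every generalized $\lambda$-eigenspace of $A$ with $\lambda\neq 1$ but grow linearly in $k$ on the generalized $1$-eigenspace. Boundedness of $\{S_k\mathbf{v}\}$ therefore forces $\mathbf{v}$ to lie in the complement of the generalized $1$-eigenspace of $A$, which is exactly $\mathrm{Range}(I-A)$. A solution $\mathbf{w}$ thus exists.

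Given such a $\mathbf{w}$, the formula $H(s,c_i)=h(s,c_i)+s^{l+1}w_i$ is holomorphic in $s$ for each $c_i\in P$; on a sufficiently small disk $\Delta_{r'}$ the perturbation is uniformly small, so $H(s,\cdot)$ inherits the injectivity of $h(s,\cdot)$, and $H$ is a holomorphic motion of $P$ over $\Delta_{r'}$. By construction $\widehat{H}-H=O(|s|^{l+2})$, which is asymptotic invariance of order $l+1$, as required. The main obstacle is the spectral constraint $\mathbf{v}\in\mathrm{Range}(I-A)$: this is not visible directly from the hypothesis that $h$ is asymptotically invariant of order $l$, and it can be established only by combining the spectral-radius bound on $A$ with the uniform boundedness of the \emph{entire} tower of iterated lifts from Theorem~\ref{lift}. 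This is precisely where the assumption that $h$ is a genuine holomorphic motion over a disk (rather than merely a formal power series in $s$) is indispensable.
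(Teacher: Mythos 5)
Your proof is correct, but it takes a genuinely different route from the paper's. The paper constructs $H$ softly: it takes the tower of lifts $\{h_k\}$ from Theorem~\ref{lift}, forms the Ces\`aro averages $\mu_k = \frac{1}{k}\sum_{i=0}^{k-1}h_i$ and $\nu_k=\frac{1}{k}\sum_{i=1}^{k}h_i$, uses normal-family compactness to extract a common subsequential limit $H$, and then verifies the order-$(l+1)$ invariance by a direct estimate (the Claim: $F_{\mu_k(s,c_1)}(\nu_k(s,z))=\mu_k(s,g(z))+{\rm O}(|s|^{l+2})$ via Lemmas~\ref{diff} and~\ref{liftorder}). You instead write the desired $H$ explicitly as a degree-$(l+1)$ polynomial correction $H=h+s^{l+1}\mathbf{w}$ and reduce the problem to the linear system $(I-A)\mathbf{w}=\mathbf{v}$, then prove solvability by identifying the $s^{l+1}$-coefficient of $h_k-h_0$ with the partial sum $S_k\mathbf{v}$, deducing boundedness of $\{S_k\mathbf{v}\}$ from the uniform bound of Theorem~\ref{lift} and Cauchy's estimate, and concluding $\mathbf{v}\in\mathrm{Range}(I-A)$ by spectral theory. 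Both arguments ultimately rest on the same ingredients (the tower of lifts, Cauchy estimates, the spectral control on $A$), but yours is more explicit and constructive, while the paper's sidesteps any linear algebra about $(I-A)$ at the cost of a compactness extraction. One small imprecision worth flagging: the step ``$S_k$ remains bounded on every generalized $\lambda$-eigenspace with $\lambda\ne1$ and grows linearly on the generalized $1$-eigenspace'' does not follow from $\rho(A)\le1$ alone---a non-diagonalizable unimodular block would spoil both statements. What you actually need is that $A$ is \emph{power-bounded}, which is precisely what the paper's proof of the spectral-radius lemma establishes (it shows $\|A^k\|\le 3M/a$ uniformly, not merely $\rho(A)\le1$), so you should cite the power-boundedness explicitly rather than the weaker spectral-radius bound.
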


\begin{proof}
Take $h_{0}(s,z)=h(s,z)$. By Theorem~\ref{lift}, we can find a sequence of holomorphic motions $\{h_{k}(s,z)\}_{k=0}^{\infty}$ such that $h_{k+1}$ is a lift of $h_{k}$ satisfying $(i)$ and $(ii)$.  Consider the two means
$$
\mu_{k} (s,z) =\frac{1}{k}\sum_{i=0}^{k-1} h_{i}(s,z)\quad \hbox{and}\quad
\nu_{k} (s,z) =\frac{1}{k}\sum_{i=1}^{k} h_{i}(s,z).
$$
Both of them are uniformly bounded. Thus they form  normal families and  we can find     subsequences
$$
\mu_{k_{n}} (s,z) =\frac{1}{k_{n}}\sum_{i=0}^{k_{n}-1} h_{i}(s,z) \quad \mbox{and}\quad 
\nu_{k_{n}} (s,z) =\frac{1}{k_{n}}\sum_{i=1}^{k_{n}} h_{i}(s,z)   $$
that both converge to the same holomorphic limit  $H(s,z)$ as $k$ goes to infinity.  
 This limit is defined on $\Delta_{r}\times P$ and is holomorphic in $s$. Since $H(0,z)=z$ and since $P$ contains only a finite number of points, we can find  $0<r'\leq r$ such that for any $s\in \Delta_{r'}$, $H(s, z)$ is injective on $P$. Thus $H(s,z)$ is a holomorphic motion as well.  
   
We need to show that   $H$ is asymptotically invariant of order $l+1$.  This will follow from:

\textbf{Claim:} For each $z\in P \setminus \{c_0\}$ and any $k\geq 1$,
$$F_{\mu_k(s,c_1)}(\nu_k(s,z))=\mu_k(s,g(z))+{\rm O}(|s|^{l+2}).$$

To see this, denote the lift of $\mu_k$ by $\widehat{\mu}_k$, that is,
$$F_{\mu_k(s,c_1)}(\widehat{\mu}_k (s,z))=\mu_k(s,g(z)).$$
By the definitions of $\mu_k, \nu_k$ and $F_{\mu_k(s,c_1)}$, the claim implies that 
$$
\widehat{\mu}_k(s,z)-\nu_k(s,z)=\rm{O}(|s|^{l+2})
$$ 
for each $k$. Using an argument similar to the proof of Theorem~\ref{lift}, and, if necessary,  taking $r'$ smaller, we can assume $\{\widehat{\mu}_k(s,z)\}$ 
is a bounded sequence on $\Delta_{r'}\times P$. Taking a subsequence if  necessary, we obtain a holomorphic limit,  $\widehat{\mu}_k(s,z)\to \widehat{H}(s,z)$ on $\Delta_{r'}\times P$ so that $\widehat{H}$ is a lift of $H$ and for $ s\in \Delta_{r'}, z\in P$. 
Moreover,  the sequence $\{\widehat{\mu}_k(s,z)-\nu_k(s,z))\}$  is also a bounded sequence of holomorphic functions on $\Delta_{r'}$ so that 
$$
\widehat{H} (s,z) - H(s,z) =O(|s|^{l+2}).
$$
Therefore $H(s,z)$ is asymptotically invariant of order $l+1$.

\textbf{Proof of the claim:}  Fix $z\in P \setminus\{c_0\}$ and $k\geq 1$.   By the construction of $h_i(s,z),$ we have 
$$  F_{h_i(s,c_1)}( h_{i+1}(s,z)) =h_i(s,g(z))$$ for every $i\geq 0$. Thus
$$\mu_k(s,g(z))=\frac{1}{k}\sum_{i=0}^{k-1} F_{h_i(s,c_1)}(h_{i+1}(s,z)).$$

By  Lemma~\ref{liftorder}, we have $h_{i+1}(s,z)-h_i(s,z)={\rm O}(|s|^{l+1})$ for all $z\in P$ and $i\geq 0$. Therefore all the functions $h_i(s,z)$, $\mu_i(s,z)$, $\nu_i(s,z)$ have the same derivatives up to order $l$ at $s=0$. Applying  Lemma~\ref{diff}, we have  
\begin{multline*}F(h_i(s,c_1),h_{i+1}(s,z))-F(\mu_k(s,c_1),\nu_k(s,z))=\\
\frac{\partial F}{\partial w}(c_1, z)(h_i(c_1,z)-\mu_k(s,c_1))+\frac{\partial F}{\partial z}(c_1, z)(h_{i+1}(s,c_1)-\nu_k(s,z))+{\rm O}(|s|^{l+2}).
 \end{multline*}
 Summing from $i=0$ to  $k-1$, we obtain
$$\frac{1}{k}\sum_{i=0}^{k-1}F(h_i(s,c_1),h_{i+1}(s,z))=F_{\mu_k(s,c_1)}( \nu_k(s,z))+{\rm O}(|s|^{l+2}).$$ Thus we have the equality
$$F_{\mu_k(s,c_1)}(\nu_k(s,z))=\mu_k(s,g(z))+{\rm O}(|s|^{l+2}) $$
 which proves the claim. 
\end{proof}

\subsection{Proof of transversality}
Finally, we can state our transversality result as a theorem. Although we state it for real parameters, the proof works for all complex parameters provided we move along paths where we can define virtual pre-poles.  

\begin{theorem}\label{transversality}
The  tangent family $f_t$ is transversal at any virtual center parameter $t_{0}$ such that, taking appropriate directional limits,  $f_{t_{0}}^{m}(t_{0}) =t_0$. 
\end{theorem}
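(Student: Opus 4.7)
\emph{Proof plan.} I would argue by contradiction, assuming that $\Phi'(t_0) = 0$. By Lemma~\ref{eig1} this supplies a non-zero eigenvector $\mathbf{v} = (v_1, \ldots, v_{m-1})$ of the transfer operator $A$ with eigenvalue $1$; the ``only if'' direction of that lemma moreover shows that $v_i \neq 0$ for every $i = 1, \ldots, m-1$. After rescaling so that $\max_i |v_i| \leq a/3$, the linear formula $h_0(s, c_0) = c_0$, $h_0(s, c_i) = c_i + s v_i$ defines a holomorphic motion of $P$ over $\Delta$, and the eigenvector relation $A\mathbf{v} = \mathbf{v}$ translates (via Definition~\ref{transfop}) into the statement that $h_0$ is asymptotically invariant of order $1$.

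The core of the argument is then to iterate Lemma~\ref{increaseorder} to produce, for every integer $l \geq 1$, a holomorphic motion $H_l$ on some disk $\Delta_{r_l}$ that is asymptotically invariant of order $l$ and whose linear part at $s=0$ is still the eigenvector $\mathbf{v}$. Two points have to be verified at each step. First, that the construction of Lemma~\ref{increaseorder} preserves the first-order jet: because a motion asymptotically invariant of order $l$ satisfies $h_{i+1} - h_i = O(|s|^{l+1})$ along its tower of lifts (applying Lemma~\ref{liftorder} inductively), all the lifts share the same first derivative $v_i$ at $s = 0$, and this property is inherited by the Ces\`aro averages $\mu_{k}, \nu_{k}$ and by the normal-families limit $H_l$; in particular $(\partial H_l / \partial s)(0, c_1) = v_1 \neq 0$. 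Second, to extract the consequence for $\Phi$: writing $c_1^{l}(s) = H_l(s, c_1)$ and applying the lift relation $F_{c_1^{l}(s)}(\widehat{H}_l(s, c_i)) = H_l(s, c_{i+1})$ iteratively gives
\begin{equation*}
F_{c_1^{l}(s)}^{m-1}\bigl( \widehat{H}_l(s, c_1) \bigr) = H_l(s, c_0) = c_0.
\end{equation*}
Because $\widehat{H}_l(s, c_1) - c_1^{l}(s) = O(|s|^{l+1})$ and $F_{c_1^{l}(s)}^{m-1}$ has bounded derivative at $c_1$, we conclude $\Phi(c_1^{l}(s)) = O(|s|^{l+1})$. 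Composing with the local holomorphic inverse of $s \mapsto c_1^l(s)$ (valid since $v_1 \neq 0$) this becomes $\Phi(t) = O((t - t_0)^{l+1})$ on a complex neighborhood of $t_0$.

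Since $l$ is arbitrary, every Taylor coefficient of $\Phi$ at $t_0$ vanishes. But $\Phi(t) = F_t^{m-1}(t) - c_0$ extends to a meromorphic function of complex $t$ via the holomorphic family $F_w(z) = i w \tan(iw \tan z)$, and the virtual cycle relation $F_{t_0}^{m-1}(t_0) = c_0$ ensures that $\Phi$ has a genuine holomorphic germ at $t_0$, so infinite-order vanishing forces $\Phi \equiv 0$ on the connected component of its domain of holomorphy containing $t_0$. This contradicts Theorem~\ref{bif}, which exhibits open subintervals of $(0,\pi)$ on which $f_t^{m-1}(t) \neq c_0$. The delicate step in this plan is the analyticity of $\Phi$ at $t_0$ itself: one must check, using the holomorphic dependence of $F_w(z)$ on both $w$ and $z$, that the orbit $t \mapsto F_t^k(t)$ for $k=1,\ldots,m-2$ avoids poles in a small complex disk around $t_0$, so that the only singularity is the terminal pre-pole step, which is rendered removable by the virtual cycle hypothesis. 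A secondary technical point is that the disks $\Delta_{r_l}$ may shrink with $l$, but this is harmless because Taylor coefficients at $s = 0$ are determined by arbitrarily small neighborhoods.
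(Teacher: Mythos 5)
Your proposal follows the same strategy as the paper's proof: assume non-transversality, invoke Lemma~\ref{eig1} to obtain an eigenvector of the transfer operator with eigenvalue $1$ and all components nonzero, build the linear holomorphic motion $h_0$, and iterate Lemma~\ref{increaseorder} to manufacture motions $H_l$ that are asymptotically invariant of arbitrarily high order $l$; this forces $\Phi(c_1^l(s))={\rm O}(|s|^{l+1})$ for all $l$, which is incompatible with $\Phi$ having a nonzero Taylor expansion at $t_0$. The differences from the paper are minor. (1)~Your displayed equation $F_{c_1^l(s)}^{m-1}(\widehat H_l(s,c_1))=c_0$ is not what iterating the lift relation actually gives, since the lift relation feeds $\widehat H_l$ in but produces $H_l$ out; the chain only telescopes up to ${\rm O}(|s|^{l+1})$ errors at each step, as you implicitly allow for in the next sentence. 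Spell that accumulation out, since otherwise the equation reads as an exact identity. (2)~The paper ends by noting $\Phi$ is a nonconstant holomorphic germ at $c_1$ of finite vanishing order $l$ and applies the estimate at $N=l+1$ to force $a_l=0$; you instead conclude $\Phi\equiv 0$ and contradict Theorem~\ref{bif}. Both work, and yours has the small advantage of making the nonconstancy explicit rather than tacit. (3)~The property $v_i\neq 0$ is established in the argument for the \emph{if} direction of Lemma~\ref{eig1} (eigenvalue $1\Rightarrow\Phi'=0$), not the \emph{only if} direction; the labels are swapped in your text, though the cited fact itself is correct and needed for the invertibility of $s\mapsto c_1^l(s)$. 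Your extra remarks about the holomorphic germ of $\Phi$ at $t_0$ and the shrinking disks $\Delta_{r_l}$ are correct and helpfully explicit.
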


\begin{remark} In the proof of Theorem~\ref{bif} we showed that for each $n>0$  there is at least one parameter $\beta_n$ which is a solution of   
$c_1(\mathcal{R}^n_t) = c_2(\mathcal{R}^{n-1}_t)=a_{n,t}$ where $a_{n,t}$ is a pre-pole of order $n$.  That is, such that $\beta_n$ is a virtual cycle parameter. Take $t_0= \beta_n$ in Theorem~\ref{transversality}.   Transversality implies   $c_1(\mathcal{R}^n_t)$ is an invertible function at $t=\beta_n$ where limits are taken with appropriate signs along the $t$ axis.  This means that  cycle merging  actually occurs at $\beta_n$; that is, the limits of $T^{2^{n+1}-1}(t)$ have opposite signs as $t$ approaches $\beta_n$ from opposite sides. 
\end{remark}

\begin{proof}
The main point in the proof is to show that $1$ is not an eigenvalue of the transfer operator $A$ for the family $F_w$. Then Lemma~\ref{eig1} implies that 
$\Phi'(t_{0})\not=0$.  Restricting to  $w$ real implies the tangent family is transversal at $t_{0}$.

We assume $1$ is  an eigenvalue of $A$ and obtain a contradiction.
 First,  by definition, the function $\Phi(w)=F^{m-1}_w - c_0$ satisfies $\Phi(c_1)=0$, and is non-constant and holomorphic in $w$ in a neighborhood of $w=c_1$.  Therefore, for some integer $l \geq 1$ we have $a_l \neq 0$ and 

\[ \Phi(w)= a_l (w-c_1)^l + {\rm O}(|w-c_1|^{l+1}). \]

Next, we choose an eigenvector ${\mathbf v} \in \RR^m$ with $\max_i|v_i| =a/3$  such that $A{\mathbf v}={\mathbf v}$.  As in the proof of the ``if'' part of Lemma~\ref{eig1},  we define a non-degenerate holomorphic motion $h_0$ of $P$ over $\Delta$ that is asymptotically invariant of order $1$.  By Theorem~\ref{lift} we obtain a uniformly bounded sequence of lifts,  $\{h_k\}$, which are holomorphic motions of $P$ over $\Delta_{r_{1}}$ for some $0<r_{1}<1$.
By Lemma~\ref{liftorder}, all $h_k(s, c_i)$ are asymptotically invariant of order $1$.

Set $H_{0}=h_{0}$ and apply Lemma~\ref{increaseorder}  with $h=H_{0}$,   to obtain a new holomorphic motion $H_{1}$ of $P$ over $\Delta_{r_{1}}$ (taking $r_{1}$ smaller if necessary) such that $H_{1}-H_{0}={\rm O}(s^{3})$;  that is, $H_{1}$ is asymptotically invariant of order $2$.

We now set  $h_{0}=H_{1}$ in Theorem~\ref{lift} and apply it  to obtain a new sequence of lifts, $\{h_{k}\}$;   again Lemma~\ref{liftorder}, implies that all of these motions are asymptotically invariant of order $2$. They are holomorphic motions of $P$ over $\Delta_{r_{2}}$ for some $0<r_{2}<r_{1}$.    

 Repeating this process,  for each  integer $N \geq1$, we can find a non-degenerate holomorphic motion of $P$ over $\Delta_{r_{N}}$ for some $0<r_{N}<r_{N-1}<1$ which  we denote by $H_N$ that is 
 asymptotically invariant of order $N$.  
 It follows that  
$$ 
\Phi( (H_{N}(s,c_1)) = F^{N-1}_{H_N(s,c_1)} (H_N(s,c_1) )- c_0 = {\rm O}(|s|^{N+1}). $$ In particular, if $N=l+1$, this implies that $a_l=0$ giving us the required contradiction.
\end{proof}

\subsection{Positive Transversality}
In this section we improve our transitivity result for the real tangent family.  We show that, taking directional limits appropriately, not only is the derivative in question non-zero,  it is positive.    

Continuing with our notation above we have $\Phi(w)=F^{m-1}_w(w) -c_0.$ Its derivative is 
$$
\Phi'(w)=\frac{\partial F_w}{\partial {w}}+\frac{\partial F_w}{\partial z}\Big( \frac{\partial F_w}{\partial {w}}+\frac{\partial F_w}{\partial z}\Big(\cdots\Big)\Big).
$$ 
In particular,
\begin{eqnarray*}
\Phi'(t_0)&=&\frac{\partial F_{t_0}(c_{m-1})}{\partial {w}}+\frac{\partial F_{t_0}(c_{m-1})}{\partial z}\Big( \frac{\partial F_{t_0}(c_{m-2})}{\partial {w}}+\frac{\partial F_{t_0}(c_{m-2})}{\partial z}\Big(\cdots\Big)\Big)\\
          &=& \frac{\partial F_{t_0}(c_{m-1})}{\partial {w}}+\frac{\partial F_{t_0}(c_{m-1})}{\partial z} \frac{\partial F_{t_0}(c_{m-2})}{\partial {w}}+\cdots+\frac{\partial F_{t_0}(c_{m-1})} {\partial z} \cdots\frac{\partial F_{t_0}(c_{1})}{\partial z}\\
          &=&\frac{\partial F_{t_0}(c_{m-1})}{\partial {w}}+\frac{(F^{m-1}_{t_0})'(c_1)}{(F^{m-2}_{t_0})'(c_1)}\frac{\partial F_{t_0}(c_{m-2})}{\partial {w}}+(F^{m-1}_{t_0})'(c_1)
\end{eqnarray*}

Set $$L(z)=\frac{\partial F_w(z)}{\partial w}|_{w=t_0}$$  and define the 
polynomial $$P(\rho)= 1+\sum_{n=1}^{m-1} \frac{\rho^n L(c_n)}{(F_{t_0}^n)'(c_1)}.$$

Then we can write \begin{eqnarray}\label{poseqn}\Phi'(t_0)=(F_{t_0}^{m-1})'(c_1) \cdot P(1) \end{eqnarray}
The relationship between the zeros of the polynomial $P(\rho)$ and the eigenvalues of the transfer operator is summarized in the following lemma. 
\begin{lemma}\label{zeros} For any $\rho\in \mathbb{C}$, $\det(I-\rho A)=0$ if and only if $P(\rho)=0$.
\end{lemma}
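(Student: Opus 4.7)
The plan is to establish $\det(I-\rho A)=P(\rho)$ as polynomials in $\rho$, from which the stated equivalence follows immediately. The key observation is that by (\ref{matrixA1})--(\ref{matrixA3}), the matrix $A$ has a very sparse ``arrow plus superdiagonal'' structure: the only nonzero entries of $A$ are the first column $a_{i,1}$ and the superdiagonal $a_{i,i+1}$. Consequently $I-\rho A$ is the identity everywhere except that its $(1,1)$ entry is $1-\rho a_{1,1}$, its superdiagonal carries $-\rho a_{i,i+1}$, and its first column below row~$1$ carries $-\rho a_{i,1}$. (If one takes $A$ as $m\times m$ with zero last row, the last row of $I-\rho A$ is $(0,\dots,0,1)$, and expanding along it immediately reduces to the $(m-1)\times (m-1)$ case.)

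I would then compute the determinant by cofactor expansion along the first column. The $(1,1)$-minor is upper triangular with ones on the diagonal, contributing $(1-\rho a_{1,1})\cdot 1$. For each $i\ge 2$, deleting row~$i$ and column~$1$ produces a block-diagonal matrix: an upper $(i-1)\times(i-1)$ block that is lower triangular with diagonal entries $-\rho a_{1,2},-\rho a_{2,3},\dots,-\rho a_{i-1,i}$ (the old superdiagonal entries now lie on the new diagonal after deleting column~$1$), together with a lower block that is upper triangular with ones on the diagonal. Its determinant is therefore $(-\rho)^{i-1}\prod_{k=1}^{i-1}a_{k,k+1}$. Combining the cofactor sign $(-1)^{i+1}$, the first-column factor $-\rho a_{i,1}$, and this minor, the signs collapse to give
\[
\det(I-\rho A)=1-\sum_{i=1}^{m-1}\rho^{i}\,a_{i,1}\prod_{k=1}^{i-1}a_{k,k+1},
\]
where the $i=1$ summand, with empty product equal to one, exactly reproduces the $-\rho a_{1,1}$ arising from the $(1,1)$-cofactor term.

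Finally, I would substitute the explicit formulas for the matrix entries. Using (\ref{matrixA1}), (\ref{matrixA2}) together with the chain rule identity $(F^{i}_{t_0})'(c_1)=\prod_{k=1}^{i}\tfrac{\partial F}{\partial z}(c_1,c_k)$, each product telescopes to
\[
a_{i,1}\prod_{k=1}^{i-1}a_{k,k+1}\;=\;\frac{-L(c_{i})}{(F^{i}_{t_0})'(c_{1})}.
\]
The two minus signs then cancel, and the sum becomes $\sum_{i=1}^{m-1}\rho^{i}L(c_{i})/(F^{i}_{t_0})'(c_{1})$. Thus $\det(I-\rho A)=P(\rho)$, and the lemma follows at once.

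The argument is essentially a bookkeeping exercise, so there is no deep obstacle. The only delicate point is keeping the signs straight: the cofactor sign $(-1)^{i+1}$, the leading minus from $-\rho a_{i,1}$ in the first column of $I-\rho A$, and the $(-\rho)^{i-1}$ coming from the diagonal of the lower triangular block must combine correctly to yield the single clean factor $-\rho^{i}$ in the closed form for the determinant; once this is done, the telescoping via the chain rule is automatic.
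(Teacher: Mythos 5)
Your proof is correct, but it takes a genuinely different route from the paper's. You establish the polynomial identity $\det(I-\rho A)=P(\rho)$ outright by exploiting the arrow-plus-superdiagonal sparsity of $A$: a cofactor expansion along the first column, the block-triangular structure of each first-column minor, and a chain-rule telescoping of $(F_{t_0}^i)'(c_1)=\prod_{k=1}^{i}\tfrac{\partial F}{\partial z}(c_1,c_k)$. This is pure, self-contained linear algebra and proves slightly more than the lemma asks (an identity of polynomials rather than equality of zero sets). The paper instead argues dynamically: it builds a one-parameter deformation $(g^{\rho},F_w^{\rho})$ of the data so that the transfer operator of the deformed system is $\rho A$, then invokes Lemma~\ref{eig1} applied to that deformed system (so $\det(I-\rho A)=0$ iff the deformed transversality quantity $(\Phi_\rho)'(t_0)$ vanishes) and closes the loop with the computed relation $(\Phi_\rho)'(t_0)=\rho^{-(m-1)}P(\rho)\,\Phi'(t_0)$, relying on $\Phi'(t_0)\neq 0$ from Theorem~\ref{transversality}. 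What your version buys is independence from the deformation construction, from Lemma~\ref{eig1}, and from the already-proved non-vanishing of $\Phi'(t_0)$; what the paper's version buys is a conceptual picture of $\rho$ as a reweighting of the dynamical multipliers, which makes the appearance of $P(\rho)$ feel less like a coincidence. Both are valid, and your bookkeeping of the signs $(-1)^{i+1}\cdot(-1)\cdot(-\rho)^{i-1}=-\rho^{i-1}\cdot(-1)^{2i+1}\cdot\rho^{0}$ etc.\ does collapse to the single $-\rho^i$ as claimed.
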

\begin{proof}

When $\rho=0$, $\det (I)=P(0)=1$. Assume $\rho\neq 0$ and define  a  local deformation $(g^\rho, F_w^\rho)$  of $(g, F_w)$ as follows:\\
 
\begin{enumerate}
\item for $z$ in a neighborhood of each $x\in P\setminus{c_0}$, $F_w^\rho(z)=F_w(z)+\frac{g'(x)}{\rho}(z-x)$; 
\item $g^\rho(x)=g(x)$ for $x=c_0$, and for $z$ in a neighborhood of each $x\in P\setminus{c_0}$, $g^\rho(z)=F_{c_1}^\rho(z)$.
\end{enumerate} 
If $A^\rho$ is the transfer operator associated with the deformation $F_{w}^{\rho}$,  a simple computation gives $A_\rho=\rho A$. 

We define a map $\Phi_\rho(w)=({F_{w}^{\rho}})^{m-1}(w)-c_0$. Then $$\det(I-1 \cdot A_\rho)=0 \text{ if and only if } (\Phi_\rho)'(t_0)=0.$$ Direct computation shows that $$(\Phi_\rho)'(t_0)=\frac{P(\rho)}{\rho^{m-1}}(\Phi)'(t_0).$$

Therefore $$\det(I-\rho A)=0 \text{ if and only if } P(\rho)=0.$$

\end{proof}

\medskip
\begin{corollary}[Positive Transversality]\label{postran}
With the notation of Theorem~\ref{transversality}, for $t$ real in a neighborhood of $t_0$, 
$$
\frac{\Phi'(t_{0})}{(F_{t_{0}}^{m})'(c_{1})}>0.
$$
\end{corollary}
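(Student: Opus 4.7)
My plan is to reduce positive transversality to a single positivity statement, $P(1)>0$, and then derive this from the spectral bound on the transfer operator $A$ combined with Theorem~\ref{transversality}.

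First, equation~(\ref{poseqn}) gives $\Phi'(t_0)=(F_{t_0}^{m-1})'(c_1)\cdot P(1)$. One more application of the chain rule yields $(F_{t_0}^m)'(c_1)=F'_{t_0}(c_0)\cdot (F_{t_0}^{m-1})'(c_1)$, where the pole factor $F'_{t_0}(c_0)$ is interpreted via the one-sided limit along the real axis that is consistent with the virtual cycle (Definition~\ref{vcp}). Under that sign convention the factor is positive, so the ratio $\Phi'(t_0)/(F_{t_0}^m)'(c_1)$ inherits the sign of $P(1)$, and the problem reduces to showing $P(1)>0$.

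Next, by Lemma~\ref{zeros} the zeros of $P$ are exactly the reciprocals of the nonzero eigenvalues of $A$. The estimate proved just before Lemma~\ref{eig1}, using Theorem~\ref{lift} together with Cauchy's inequality, shows that the spectral radius of $A$ is at most $1$, so every zero of $P$ has modulus $\ge 1$. Moreover, Theorem~\ref{transversality} combined with Lemma~\ref{eig1} tells us that $1$ is not an eigenvalue of $A$, so $\rho=1$ is not a zero of $P$. Hence $P$ has no zeros on the closed real interval $[0,1]$.

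Finally, since $t_0$ is real and the cycle $\{c_0,\ldots,c_{m-1}\}$ lies on the real axis, all partial derivatives of $F_w(z)$ appearing in the definition of $P$ are real, so $P$ has real coefficients and $P\big|_{[0,1]}$ is a continuous real-valued function. With $P(0)=1>0$ and no zeros on $[0,1]$, the intermediate value theorem forces $P(1)>0$, which via the first paragraph proves the corollary. The delicate step is the sign convention hidden in $(F_{t_0}^m)'(c_1)$: the flat factor $F'_{t_0}(c_0)$ is a priori zero at the pole and must be read as the directional limit matching the virtual orbit, so that the ratio truly inherits the positivity of $P(1)$ rather than its opposite; the rest is an essentially automatic consequence of the spectral-radius bound and of $P(0)=1$.
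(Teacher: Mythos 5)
Your argument is essentially correct and relies on the same three ingredients the paper uses: the identity~(\ref{poseqn}) reducing the problem to $P(1)>0$, the spectral-radius bound $\rho(A)\le 1$, and the conclusion from Theorem~\ref{transversality} via Lemma~\ref{eig1} that $1$ is not an eigenvalue of $A$. Your final step does differ from the paper's: where the paper factors $P(\rho)=\prod_{i=1}^{m-1}(1-\rho/\rho_i)$ and checks each real factor is positive and each conjugate pair contributes $|1-1/\rho_i|^2$, you instead observe that the real polynomial $P$ has no zeros on $[0,1]$ (since real zeros satisfy $|\rho_i|\ge 1$, $\rho_i\neq1$, hence lie outside $(-1,1]$) and then invoke continuity from $P(0)=1$. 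Both routes are sound; yours is arguably a bit cleaner, avoiding the explicit case split on real versus complex roots.

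The weak spot is the first paragraph. Writing $(F_{t_0}^m)'(c_1)=F'_{t_0}(c_0)\cdot(F_{t_0}^{m-1})'(c_1)$ and claiming the factor $F'_{t_0}(c_0)$ is \emph{positive} is not correct: $c_0$ is a pole, and from equation~(\ref{approxderiv}) the one-sided limit of $f_t'$ there is exactly $0$ (the pole is a flat critical point); moreover by (\ref{deriv}) the derivative $f_t'$ is everywhere nonpositive, so there is no sign convention under which that factor becomes a positive number. The corollary's denominator as printed (with exponent $m$) therefore literally vanishes, and the paper's own proof silently reads the denominator as $(F_{t_0}^{m-1})'(c_1)$, in which case (\ref{poseqn}) gives precisely $\Phi'(t_0)/(F_{t_0}^{m-1})'(c_1)=P(1)$ and the whole statement is exactly $P(1)>0$. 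You land on the right reduction in the end, but the intermediate assertion that the flat factor is positive should be dropped; it is factually false, and it isn't needed once you accept that the intended ratio equals $P(1)$.
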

\begin{proof} By equation~(\ref{poseqn}) it suffices to show $P(1)>1$.  
Rewrite the polynomial as $P(\rho)=\Pi_{i=1}^{m-1}(1-\rho/\rho_i)$  where $\rho_i \neq 0$   are the zeros of $P(\rho)$.   By  Lemma \ref{zeros},   $P(\rho_i)=\det(I-\rho_i A)=0$, which implies that $1$ is an eigenvalue of $A^{\rho_i}$ and thus $1/\rho_i$ is an eigenvalue of $A$. By Lemma~\ref{eig1},  
all eigenvalues of $A$ satisfy $\{1/|\rho_i|\leq 1,1/ \rho_i\neq 1\}$.  

 Restricting to the real valued family, the eigenvalues of $A$ are all real or complex conjugate in pairs so evaluating the polynomial at $\rho=1$ in light of the above, we conclude 
 $$P(1)=\Pi_{i=1}^{m-1}(1-1/\rho_i)>0.$$
\end{proof}

 Positive transversality gives us the   uniqueness of the $\beta_{n}$. 

\medskip
\begin{corollary}[Uniqueness]~\label{uniqueness}  For each $n>1$, there is unique  parameter $t=\beta_n$ in the interval $(\alpha_n, \alpha_{n+1})$ of the renormalization sequence for the family $f_t$ where cycle merging of order $n$ occurs.  In particular, there is a unique virtual cycle parameter of order $n$ in the sequence. 
\end{corollary}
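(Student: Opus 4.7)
The plan is to deduce uniqueness of $\beta_n$ in $(\alpha_n,\alpha_{n+1})$ from the positive transversality of Corollary~\ref{postran} by an intermediate-value argument. The key observation is that positive transversality forces the function whose zeros are the virtual cycle parameters to have a derivative of the \emph{same sign} at every one of its zeros in the interval, which rules out two adjacent zeros.

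First I would set up the defining function. For $t \in (\alpha_n, \alpha_{n+1})$, Theorem~\ref{bif}(f) tells us $\Rt^n$ is defined and its asymptotic value is $c_1(\Rt^n) = f_t^{2^{n+1}-2}(t)$. A parameter is a virtual cycle parameter of order $n$ precisely when this asymptotic value is mapped by $f_t$ onto the pole $(-1)^{n+1}\pi/2$; equivalently, with the directional-limit conventions from Theorem~\ref{bif}(b),
$$
\Phi_n(t) := f_t^{2^{n+1}-1}(t) - (-1)^{n+1}\frac{\pi}{2} = 0.
$$
By the dynamical picture on $(\alpha_n, \alpha_{n+1})$ furnished by Theorem~\ref{bif}(c)--(e), every intermediate iterate $f_t^k(t)$ with $k < 2^{n+1}-1$ remains inside the attracting basin of the relevant cycle and stays uniformly away from poles, so $\Phi_n$ extends to a real-analytic function on the whole interval.

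Next I would invoke Corollary~\ref{postran} at any zero $t_0$ of $\Phi_n$. It says $\Phi_n'(t_0)$ and the multiplier factor $(F_{t_0}^{\,m-1})'(c_1)$ attached to the associated virtual cycle share the same sign. By Lemma~\ref{pos}, combined with the explicit $\sec^2/\sech^2$ product formula in Remark~\ref{mergemult}, this multiplier factor is positive at every such $t_0$: the cycles involved have period divisible by four (since $n \ge 1$), and the relevant finite segment of the orbit lies in $\RR \cup \Im$ away from poles, making every factor strictly positive. Hence $\Phi_n'(t_0) > 0$ \emph{uniformly} in $t_0$ over all virtual cycle parameters in $(\alpha_n, \alpha_{n+1})$.

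Finally the uniqueness follows from a standard IVT argument. If $\beta < \beta'$ were two consecutive zeros of $\Phi_n$ in $(\alpha_n, \alpha_{n+1})$, positivity of $\Phi_n'(\beta)$ would force $\Phi_n(t) > 0$ just above $\beta$, while positivity of $\Phi_n'(\beta')$ would force $\Phi_n(t) < 0$ just below $\beta'$; by continuity there would be a third zero strictly between $\beta$ and $\beta'$, contradicting that they are consecutive. Hence at most one virtual cycle parameter lies in $(\alpha_n, \alpha_{n+1})$, and Theorem~\ref{bif} supplies its existence. The hard part will not be the IVT step itself but the analytic bookkeeping in the first two paragraphs: confirming that the one-sided limit choices needed to interpret $f_t^{2^{n+1}-1}(t)$ near the essential singularity patch together into a single real-analytic function on the whole interval, and that the multiplier factor of Corollary~\ref{postran} is genuinely positive once those directional limits are fixed. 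The combinatorial parity data in Theorem~\ref{bif}(b) and the symmetry used in Lemma~\ref{pos} are what make this sign bookkeeping go through.
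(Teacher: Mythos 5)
Your strategy is exactly the paper's: define $\Phi_n$ so that its zeros in $(\alpha_n,\alpha_{n+1})$ are the virtual cycle parameters, apply Corollary~\ref{postran} to see that $\Phi_n'$ carries a fixed sign at every root, and deduce uniqueness by the intermediate-value argument that two roots with the same one-sided sign would have to bracket a third. So the structure is right. But your middle paragraph, which is where the real content sits, asserts the wrong sign and for the wrong reason. The quantity $(F_{t_0}^{m-1})'(c_1)$ in Corollary~\ref{postran} is the derivative of $f_{t_0}^{m-1}$ along the open orbit segment $c_1\to c_2\to\cdots\to c_{m-1}$; it is not a cycle multiplier, so Lemma~\ref{pos} does not govern its sign. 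By equation~(\ref{deriv}) every factor $f'_{t_0}(c_i)$ is strictly negative, and $m-1=2^n-1$ is odd, so $(F_{t_0}^{m-1})'(c_1)<0$ and hence $\Phi_n'(t_0)<0$ at every root, not $>0$. (Equivalently, $(F_{t_0}^{m-1})'(c_1)=(T_{t_0}^{2^{n+1}-2})'(c_1)$ and $2^{n+1}-2\equiv 2\pmod 4$, so the $(it)^N$ factor in the Lemma~\ref{pos} computation is $-t^N$, not $t^N$.)

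There is also an indexing slip in the first paragraph: $c_1(\Rt^n)=\Rt^n(\pi/2^+)=f_t^{2^n}(\pi/2^+)=f_t^{2^n-1}(t)$, not $f_t^{2^{n+1}-2}(t)$; the defining equation matching the period-$2^n$ virtual cycle of $f$ in Theorem~\ref{bif}(b)(i) and the $\Phi$ of equation~(\ref{traneq}) is $f_t^{2^n-1}(t)=(-1)^{n+1}\pi/2$. If you instead use the exponent $2^{n+1}-1$ (the merged cycle), then by oddness of $f_t$ the target pole is $(-1)^n\pi/2$, so as you wrote it $\Phi_n$ does not actually vanish at $\beta_n$. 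Neither slip kills the proof—the intermediate-value step needs only that $\Phi_n'$ keeps the same sign at every root, and Corollary~\ref{postran} gives that whatever the sign is—but as written your argument rests on an incorrect positivity claim and a mis-cited lemma, and the formula for $\Phi_n$ needs its exponent and sign corrected.
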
 

\begin{proof} In the proof of Theorem~\ref{bif} we showed that for each $n>0$  there is at least one parameter $\beta_n$ which is a solution of   
$c_1(\mathcal{R}^n_t) = c_2(\mathcal{R}^{n-1}_t)=a_{n,t}$ where $a_{n,t}$ is a pre-pole of order $n$. More precisely, with $c_{0}$ equal to  either $\pi/2$ or $-\pi/2$, depending on $n$,  $\beta_{n}$ is a solution of  $f_{t}^{2^{n}}(t)-c_{0}=0$ in the interval $(\alpha_n, \alpha_{n+1})$. Since $f_{t}$ depends on $t$ holomorphically,  
there are only finitely many solutions.  The curve $\gamma (t)=f_{t}^{2^{n}}(t)-c_{0}$  is a smooth curve defined on $(\alpha_n, \alpha_{n+1})$ and transversality implies that at each root each root $\gamma (t)$ is  locally either strictly increasing   or strictly decreasing.  
  Positive transversality implies that it has the same direction at each root.   
Therefore, it can not have more than one root because if it did,  the directions at adjacent roots would have to be opposite.  
\end{proof}

The uniqueness of the $\alpha_n$ now follows directly from  uniqueness of the $\beta_{n}$.   

\begin{corollary}\label{uniquealpha} For each $n>1$, there is unique  parameter $t=\alpha_n$ in the interval $(\beta_{n-1}, \beta_{n})$ of the renormalization sequence for the family $f_t$ where cycle doubling of order $n$ occurs.   \end{corollary}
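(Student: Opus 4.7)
The plan is to prove uniqueness by contradiction, using two ingredients already in hand: the strict monotonicity of the multiplier function on each hyperbolic component (Proposition~\ref{monomult}, sharpened by the fact that the multiplier is a covering map onto $\Delta\setminus\{0\}$ and hence a local homeomorphism when restricted to a real slice) together with the uniqueness of the virtual cycle parameter $\beta_n$ in $(\alpha_n,\alpha_{n+1})$ just established in Corollary~\ref{uniqueness}. Suppose $\tilde\alpha\in(\beta_{n-1},\beta_n)$ is a cycle doubling parameter of order $n$ distinct from $\alpha_n$; then $T_{\tilde\alpha}$ carries a parabolic cycle of period $2^{n+1}$ with multiplier $+1$ from which two attracting cycles of the same period bifurcate. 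I would split into two cases according to whether $\tilde\alpha$ lies to the left or to the right of $\alpha_n$.

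If $\tilde\alpha\in(\beta_{n-1},\alpha_n)$, the attracting cycle described in Theorem~\ref{bif}(c) (with the indexing shifted by one) belongs to a single hyperbolic component whose real slice contains this entire subinterval. By Proposition~\ref{monomult} its multiplier is strictly monotonic, rising from $0$ at $\beta_{n-1}^+$ to $1$ at $\alpha_n^-$, and is therefore strictly less than $1$ throughout the open interval. Hence $T_{\tilde\alpha}$ cannot carry a parabolic cycle with multiplier $+1$, yielding the desired contradiction.

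If $\tilde\alpha\in(\alpha_n,\beta_n)$, I would first identify the real slice $(\alpha_n,x)$ of the hyperbolic component containing the two attracting cycles born from the bifurcation at $\alpha_n$, and note that by Proposition~\ref{monomult} the multiplier decreases strictly from $1$ on this slice. At the right endpoint $x$ the limit of the multiplier must be either $0$ or a value $\lambda$ with $|\lambda|=1$. Lemma~\ref{pos} forces this limit to be real and positive (the period $2^{n+1}$ is divisible by $4$ for $n\ge 1$), so the only parabolic option would be $+1$; strict monotonicity rules this out since the multiplier has already dropped below $1$ just to the right of $\alpha_n$. Therefore $x$ is a virtual cycle parameter, and by Corollary~\ref{uniqueness} we must have $x=\beta_n$. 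The entire interval $(\alpha_n,\beta_n)$ is then a single hyperbolic component on which the multiplier is strictly less than $1$, so no parabolic cycle with multiplier $+1$ can occur at any interior point, again contradicting the existence of $\tilde\alpha$.

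The main obstacle, and the reason Corollary~\ref{uniqueness} is essential, is the second case: one must rule out the scenario in which the hyperbolic component emerging from $\alpha_n$ closes off at some intermediate virtual cycle parameter strictly before $\beta_n$, after which a fresh bifurcation cascade could in principle produce further parabolic parameters with multiplier $+1$. The positive transversality that drives Corollary~\ref{uniqueness} is precisely what eliminates this possibility, forcing the hyperbolic component to extend all the way to $\beta_n$ and closing the argument.
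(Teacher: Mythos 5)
Your argument is close in spirit to the paper's: both proofs rest on Proposition~\ref{monomult} (strict monotonicity of the multiplier on the real slice of a hyperbolic component) together with Corollary~\ref{uniqueness} (uniqueness of the virtual cycle parameter, which comes from positive transversality), and your Case~1 and the second half of your Case~2 are essentially the paper's reasoning. However, there is a gap that the paper's proof is specifically structured to avoid, and you should be aware of it.

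Your Case~2 begins by ``identifying the real slice $(\alpha_n,x)$ of the hyperbolic component containing the two attracting cycles born from the bifurcation at $\alpha_n$.'' This presupposes that cycle doubling actually \emph{does} occur at $\alpha_n$, i.e.\ that for $t>\alpha_n$ the merged cycle $C_{2^{n+1},t}$ turns repelling and two new attracting cycles of period $2^{n+1}$ appear. But Lemma~\ref{parabolicbur} by itself only gives a trichotomy tied to which side of the unit circle the multiplier moves to; it does not tell you which side. In the paper this direction is established in Lemma~\ref{alpha2} and in the proof of Theorem~\ref{bif}(e) \emph{by citing Corollary~\ref{uniquealpha} itself}. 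So your appeal to ``the two attracting cycles born from the bifurcation at $\alpha_n$'' silently imports the conclusion you are trying to prove, producing a circularity. The paper's proof of Corollary~\ref{uniquealpha} is self-contained precisely because it treats the alternative head-on: it considers the possibility that, for $t$ just past $\alpha_n$, the multiplier of the \emph{old} merged cycle $C_{2^{n+1},t}$ stays below $1$ (so $t$ enters a new hyperbolic component with the same cycle still attracting, and no doubling happens). In that scenario Proposition~\ref{monomult} forces the multiplier to reach $0$ at some $t^\ast>\alpha_n$, making $t^\ast$ a second root of the curve $\gamma(t)$ associated with $\beta_{n-1}$, which contradicts the positive-transversality uniqueness argument. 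Ruling this out is what entitles you to assert that cycle doubling occurs and then run your Case~2 argument; without it your proof has not shown $\alpha_n$ is unique, only that \emph{if} cycle doubling occurs at $\alpha_n$ then there is no second doubling parameter. Your closing paragraph shows you sensed the issue (you flag the ``fresh bifurcation cascade'' danger and correctly name Corollary~\ref{uniqueness} as the tool that kills it), but the case you actually need to add is the one where the merged cycle itself persists attracting past $\alpha_n$, not merely the one where the doubled component closes off early.
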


\begin{proof}
Let $C_{2^n,t}$ be the merged cycle for $t$ just to the right of $\beta_{n-1}$.  By Proposition~\ref{monomult}, as $t$ increases, the multiplier of $C_{2^n,t}$ increases to $+1$ at $\alpha_n$.  If the mulitplier continues to increase as $t$ increases,  we have cycle doubling into a new hyperbolic component and, again by Proposition~\ref{monomult},  the multipliers of the two new cycles decrease the  right endpoint of this component must be $\beta_n$  so that $\alpha_n$ is unique in $(\beta_{n-1}, \beta_{n})$.  

Otherwise,  as $t$ increases beyond $\alpha_n$ the multiplier of $C_{2^n,t}$ decreases again and $t$ is in a new hyperbolic component. By Proposition~\ref{monomult},  at the  right endpoint of this new component, $t^*$,  the multiplier is $0$ and is a virtual cycle parameter of order $2^n$.  Thus $t^* > \beta_{n-1}$  is a second  root of the curve $\gamma(t)$ defined in  the proof of Corollary~\ref{uniqueness} above.  This, however, contradicts the unqiueness of $\beta_{n-1}$ proved there so this case cannot occur and $\alpha_n$ is unique  in $(\beta_{n-1}, \beta_{n})$ as claimed. 
\end{proof}

\section{The Infinitely Renormalizable Tangent Map and the Strange Attractor}\label{inf ren}

Since the interleaved sequences $\{ \alpha_{n}\}_{n=1}^{\infty}$ and $\{ \beta_{n}\}_{n=1}^{\infty}$ 
are both  increasing and bounded they  have a common limit, $t_{\infty}$.   Since $t_{\infty}$ is a limit of the $\beta_n$, and we have proved that $f_{t}$ is $n$-renormalizable for $t \in (\beta_n, \pi)$, we can define $\R_{t_{\infty}}^{n} $ for all $n\geq 1$;  we say that  $f_{t_{\infty}}$ or $T_{t_{\infty}}$ is {\em infinitely renormalizable}.    In addition, since $t_{\infty}$ is a limit of the $\alpha_n$, and we have shown that $T_{t}$ has repelling periodic cycles of period $2^{n+1}$ that persist for all $t>\alpha_n$, $T_{t_{\infty}}$ has repelling periodic cycles of period $2^n$ for all $n$.  

In this section we will describe  properties of   
the orbits of the asymptotic values $\pm t_{\infty}$ under the map $T_{t_{\infty}}$.  As we have seen above, they are contained in the real and imaginary lines.   
We will give a topological description 
of the closure of the union of these two orbits which we denote by
  $C$.   We will show that $C \cap \Im$ is a perfect, uncountable,  totally disconnected and unbounded set while  $C \cap \Re$  is perfect, uncountable, totally disconnected and bounded and thus a Cantor set;   this Cantor set  
consists of two binary Cantor sets.   We argue  by analyzing the infinite sequence of  renormalizations of $T_{t_{\infty}}$. 

 As a corollary it will follow that almost every point in the real and imaginary axes is attracted by $C$ and  therefore that $T_{\infty}$ has no attracting or parabolic periodic cycles. 

 The standard  construction of a Cantor set in the real line ${\mathbb R}$ involves an infinite iterative process where at each step subintervals are removed from all the remaining intervals.   The remaining intervals are called {\em bridges} and the removed intervals are called {\em gaps}.  The Cantor set is called binary if only one gap is removed from bridge at each step.  
We give a more precise definition here which is adapted from~\cite{JCantor}. 

\medskip
\begin{definition}~\label{cantor}
 Let ${\mathcal I}=\{ {\mathcal I}_{n}\}_{n=0}^{\infty}$ be a sequence of
families of disjoint, non-empty, compact intervals:  the  {\em $n^{th}$ level bridges}. Let
${\mathcal G}=\{ {\mathcal G}_{n}\}_{n=0}^{\infty}$
be a sequence of families of disjoint, non-empty, open intervals: the {\em  $n^{th}$ level gaps}.
Let ${\mathcal CS}=\{{\mathcal I}, {\mathcal G}\}$.  

\smallskip

We call ${\mathcal CS}$ a binary Cantor system if
\begin{itemize}
\item[(i)] for each $0\leq n< \infty$ and each interval $I \in {\mathcal
I}_{n}$, there is a unique interval $G$ in ${\mathcal G}_{n}$ and two
intervals $L$ and $R$ in ${\mathcal I}_{n+1}$ which lie to the left and to the
right of $G$ such that $I=L\cup G\cup R$ (see Figure~\ref{fig12}), and
\item[(ii)] $CS= \cap_{n=0}^{\infty }\cup_{I\in {\mathcal I}_{n}}I$ is
totally disconnected. We call $CS$ the binary Cantor set  generated by 
the binary Cantor system ${\mathcal CS}$.
\end{itemize}
\end{definition}

\begin{figure}[ht]
\centering
\includegraphics[width=3in]{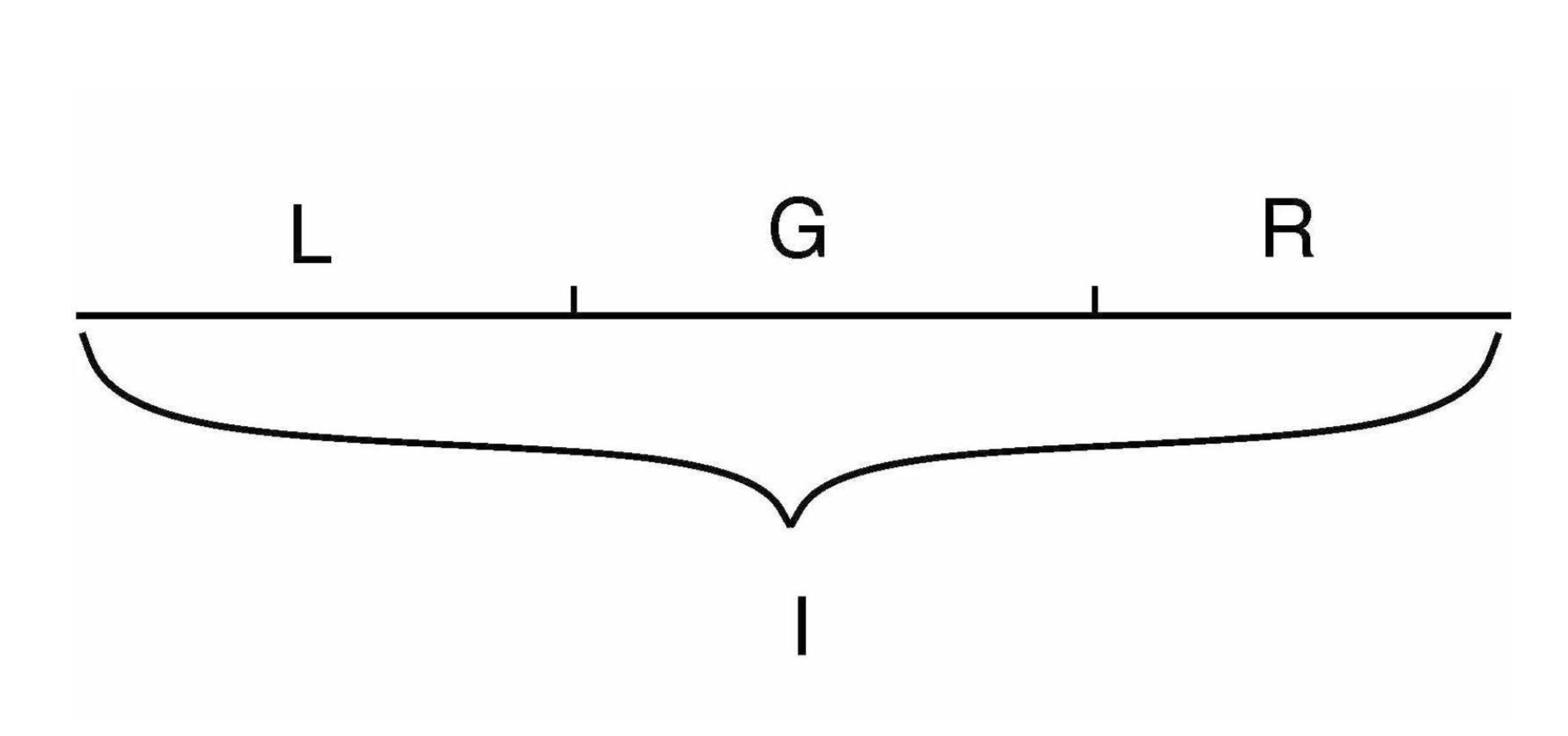}
\caption{Bridge and Gap}~\label{fig12}
\end{figure}

Suppose $T=T_{t_{\infty}}$ is the tangent map at the limit point $t_\infty$. 
Let 
$$
O=\rm{Orb} (\pm t_{\infty}) =\{ T^{n} (\pm t_{\infty})\}_{n=0}^{\infty}
$$ 
 be union of the orbits of both the asymptotic values, $\pm t_{\infty}$.  Note that by symmetry $-O=O$.
Let $C=\overline{O}$ be  the closure of the orbits. Recall that  $f=T^{2}$ maps both the real and imaginary lines to themselves.

\medskip
\begin{theorem}\label{infrenorm}
 The map $T$ is an infinitely renormalizable tangent map and 
the intersection $C_{r}={\mathbb R}\cap C$ consists of two binary Cantor sets;  $C_{r}$  is forward $f$-invariant and  $f: C_{r}\to C_{r}$ is minimal. 
The intersection $C_{i}=\Im \cap C$ is a totally disconnected, uncountable,   unbounded and perfect subset of the imaginary line $\Im$.  It is also forward $f$-invariant and minimal. 
\end{theorem}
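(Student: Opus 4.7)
The plan is to use infinite renormalization to build nested families of closed intervals (bridges) on the real axis whose intersection is $C_r$, and then to transport this structure to the imaginary axis via $T_{t_\infty}$.

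\emph{Setup.} Since $t_\infty=\lim_n\beta_n=\lim_n\alpha_n$ and $t_\infty>\beta_n$ for every $n$, Theorem~\ref{bif}(f) shows that $\R^n_{t_\infty}=f_{t_\infty}^{2^n}$ is defined on a nested quadruple of intervals $I_{n,t_\infty}\supset I_{n+1,t_\infty}$, each bounded by pre-poles of $f$ and containing one of the poles $\pm\pi/2$; hence $T_{t_\infty}$ is infinitely renormalizable. Combining Theorem~\ref{bif}(e) with Proposition~\ref{monomult} shows that for every $k$ the period-$2^{k}$ cycles of $f_{t_\infty}$ obtained by analytic continuation are repelling, so $T_{t_\infty}$ has no attracting or parabolic cycle.

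\emph{Construction of the bridge/gap system.} For each $n\geq 1$, group the four subintervals of $I_{n,t_\infty}$ into two symmetric compact blocks $I_n^+\subset\overline{\mathbb{R}^+}$ and $I_n^-=-I_n^+\subset\overline{\mathbb{R}^-}$, each meeting a pole $\pm\pi/2$ in its interior. Declare the level-$n$ bridges to be
\[
\mathcal{I}_n=\bigl\{\overline{f^k(I_n^+)}:0\leq k<2^{n+1}\bigr\}\cup\bigl\{\overline{f^k(I_n^-)}:0\leq k<2^{n+1}\bigr\},
\]
a finite collection of pairwise disjoint compact intervals, with the disjointness coming from the renormalization structure. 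Since $I_{n+1}\subset I_n$ and $\R^n\circ\R^n=\R^{n+1}$ on $I_{n+1}$, each bridge at level $n$ splits into exactly two bridges at level $n+1$ separated by a single open gap consisting of points that iterate to a pre-pole of $\R^{n+1}$ but not of $\R^n$. This matches Definition~\ref{cantor} of a binary Cantor system, and the $f$-orbit of $\pm t_\infty$ lies in $K_n=\bigcup\mathcal{I}_n$ for every $n$, so $C_r\subseteq K=\bigcap_n K_n$.

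\emph{Shrinking bridges and Cantor properties.} The key step, and the main obstacle, is to show that the maximal length of a bridge in $\mathcal{I}_n$ tends to zero, which yields total disconnectedness of $K$ and the reverse inclusion $K\subseteq C_r$. To this end I would combine three ingredients: $f_{t_\infty}$ has negative Schwarzian derivative on each component of its domain of smoothness (Section~\ref{basic}); $f_{t_\infty}$ has repelling periodic cycles of every period $2^{k}$ accumulating on the level-$k$ bridges; and there are no attracting or parabolic cycles at $t_\infty$. The standard no-wandering-interval theory for piecewise smooth maps with negative Schwarzian, adapted for the flat critical points at the poles as in \cite{MSBook}, then rules out any $f$-invariant interval inside a bridge, forcing uniform contraction of bridge lengths. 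The resulting $K$ is closed, totally disconnected, uncountable and perfect (each level-$n$ bridge strictly contains level-$(n+1)$ sub-bridges), hence a Cantor set equal to $C_r$. Oddness of $f$ gives $-C_r=C_r$, so $K\cap\mathbb{R}^+$ and $K\cap\mathbb{R}^-$ are two symmetric binary Cantor sets, establishing the first assertion.

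\emph{Minimality, invariance, and the imaginary axis.} The cyclic labeling of the $2^{n+1}$ bridges at level $n$ is arranged so that $f$ sends the $k$-th bridge to the $(k+1\bmod 2^{n+1})$-th one; taking the projective limit of the finite cyclic quotients shows $(f,C_r)$ is topologically conjugate to the $2$-adic odometer, which is minimal. In particular $f(C_r)\subseteq C_r$ and every $f$-orbit in $C_r$ is dense. For the imaginary axis, observe that $C_i$ is, modulo points at infinity, the image of $C_r$ under $T_{t_\infty}$ restricted to fundamental real intervals, which is a real-analytic diffeomorphism onto $\Im$ with $T_{t_\infty}((\pm\pi/2)^{\pm})=\mp i\infty$. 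Because points of $C_r$ accumulate at the poles $\pm\pi/2$ from both sides, their $T_{t_\infty}$-images accumulate at $\pm i\infty$, proving unboundedness of $C_i$; total disconnectedness and perfectness pass through this local diffeomorphism. Finally, forward $f$-invariance and minimality of $f:C_i\to C_i$ follow from the intertwining $T_{t_\infty}\circ f=f\circ T_{t_\infty}$ combined with minimality on $C_r$.
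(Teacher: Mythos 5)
Your overall strategy -- nested bridges from the renormalization intervals on $\mathbb R$, then push forward to $\Im$ by $T_{t_\infty}$ -- matches the paper's, but the specific bridge construction you propose is flawed in a way that matters, and your shrinking argument is not what the paper does.

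\textbf{The bridge definition does not produce intervals.} You define the level-$n$ bridges as $\overline{f^k(I_n^\pm)}$ for $0\leq k<2^{n+1}$. But $I_n^+=[a_n,b_n]$ contains the pole $\pi/2$ in its interior, and $f$ is discontinuous there: $f([a_n,\pi/2])$ abuts $-t_\infty$ while $f([\pi/2,b_n])$ abuts $+t_\infty$, so $\overline{f(I_n^+)}$ is a disjoint union of a negative and a positive interval, not a compact interval. The same problem propagates to higher $k$. Definition~\ref{cantor} requires each bridge to be a compact interval that splits into left bridge, gap, right bridge; your sets $\overline{f^k(I_n^\pm)}$ are not of this form. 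The paper avoids this entirely by defining the bridges $J_{nm\pm}$ \emph{explicitly} as the intervals bounded by the orbit points $\pm c_m,\pm c_{2^n+m}$ (where $c_m=|f^m((\pi/2)^+)|$), and then verifying, one half at a time, how $f$ maps bridge halves to bridges; the only bridges containing a pole are the central $J_{n\pm}$, and the two halves of $J_{n\pm}$ map to two different bridges, which is consistent with the Cantor-system bookkeeping.

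\textbf{The ``shrinking bridges'' step is not the paper's route and is not carried out.} You invoke negative Schwarzian and no-wandering-interval theory ``adapted for the flat critical points at the poles'' to force bridge lengths to zero. The paper never does this: because its bridge endpoints are exactly the orbit points $\pm c_m$, one has $C_r=\overline{O}\cap\mathbb R\subseteq\bigcap_n\bigcup_k J_{nk\pm}$ essentially by construction, and the equality $C_r=C_-\cup C_+$ is read off the combinatorics rather than deduced from a wandering-interval theorem. Your proposed route has real obstacles you only gesture at: the map $f_{t_\infty}$ has \emph{flat} critical behaviour at the poles (all derivatives vanish from one side, see (\ref{approxderiv})), which is precisely the non-flatness hypothesis the standard theorems in \cite{MSBook} require; and even granting absence of wandering intervals, deriving uniform decay of bridge lengths and the reverse inclusion $K\subseteq C_r$ would require additional work that you do not supply. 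Since your Cantor set $K$ is not \emph{a priori} tied to orbit points the way the paper's is, the identification $K=C_r$ is a genuine missing step, not a formality.

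The odometer observation for minimality and the transport to $\Im$ via $T_{t_\infty}$ using $T\circ f=f\circ T$ are fine and essentially what the paper does (the paper just writes $C_i=T(C_-)=T(C_+)$ and notes $T$ is odd, one-to-one on $(0,\pi)$, and sends the accumulation of $C_r$ at $\pm\pi/2$ to $\pm i\infty$). But to make the real-axis part rigorous you should replace the ill-defined image bridges with the paper's explicit $J_{nm\pm}$ indexed by orbit points, after which the total-disconnectedness and the equality with the orbit closure come out of the construction rather than out of an appeal to folklore no-wandering-interval results that don't apply verbatim here.
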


\begin{proof}   We first prove the assertions about $C_{r}$.  The inductive construction in the proof of Theorem~\ref{bif} shows that the renormalized functions $\R^n$ converge to a limit $\R^{\infty}$.    To study the properties of the orbits of the asymptotic values  we set $\R^0=f$ and use the functions  $c_m(\R^n)=|\R^{nm}(\pi/2^{+})|$   introduced in that construction.    Since $t$ is fixed they are constants.   It will be convenient to set   $c_m=c_m(f)$ since these are the points of $O^+ \cup \RR$.  We also defined the pre-pole functions $a_n, b_n$ by the relations 
$\R^{n-1}(a_{n}) = \pm \pi/2, \R^{n-1}(b_{n})=\mp \pi/2$ where the sign depends on the parity of $n$.  These are the endpoints of the  intervals $I_n=I_n^{-} \cup I_n{+}$ that constitute the domain of $\R^n$.   

Recall  that  $I_{1}$ is divided into subintervals that are mapped by $\R$ as follows:
 $$
\R=f^{2}: \left\{ \begin{array}{ll}
                                              I_{11-}=[-b_{1},-\pi/2]&\to [-c_{1}, -c_{2}]\\
                                             I_{12-}=[-\pi/2,-a_{1}]&\to [c_{2}, c_{1}]\\
                                             I_{12+}=[a_{1}, \pi/2] &\to [-c_{1}, -c_{2}]\\
                                             I_{11+}=[\pi/2, b_{1}] &\to [c_{2}, c_{1}]
                                             \end{array}\right.
$$
 (See Figure~\ref{fig5})
 
The endpoints of the intervals in the range belong to $O \cap \RR$.  We denote these intervals  by 
\[ J_{0-}=[-c_{1}, -c_{2}]  \mbox{  and  }   J_{0+}=[c_{2}, c_{1}]. \]
     We set  ${\mathcal I}_{0\pm}=\{ J_{0\pm}\}$ and call it the set of {\em $0^{th}$-level bridges.} 

Applying $f$  we find  $c_{3}= f(-c_{2})>0$ and $c_{4}= f(c_{3})>0$,  so that 
$$
c_{2}< \pi/2<c_{4}< c_{3}<c_{1}.
$$
Now set  
\[ J_{1-}=[-c_{4}, -c_{2}], \, \, J_{1+}=[c_{2}, c_{4}], \, \, J_{11-}=[-c_{1}, -c_{3}]  \mbox{ and } J_{11+}=[c_{3}, c_{1}]. \]
   The maps 
$$
f: J_{11-}\to J_{1-}\hbox{ and } f: J_{11+}\to J_{1+}.
$$ 
are both continuous and onto. 
The interval $J_{1-}$ is divided into two intervals $J_{1-}^{l}=[-c_{4}, -\pi/2]$ and $J_{1-}^{r}=[-\pi/2, -c_{2}]$ by the pole $-\pi/2$, $f$ is continuous on each of these subintervals and
\[ f:J_{1-}^{l}  \to J_{21-}=[-c_{1}, -c_{5}]\subset J_{11-} \mbox{ and } f: J_{1-}^{r}\to J_{11+}. \] (See Figure~\ref{fig5} where these are intervals in the vertical direction and Figure~\ref{fig13} where the intervals are depicted horizontally).

Similarly,  the pole $\pi/2$ divides  
$J_{1+}$   into two intervals 
$ J_{1+}^{l}=[c_{2}, \pi/2]$ and $J_{1+}^{r}=[\pi/2, c_{4}]$   and  $f$ maps each of these subintervals continuously as follows:
$$ f:J_{1+}^{l}  \to J_{11-}  \mbox{ and } f: J_{1+}^{r}\to J_{21+}=[c_{5}, c_{1}].$$   
(See Figures~\ref{fig5} and~\ref{fig13}).

We now set 
$$
G_{0-} =(-c_{3}, -c_{4}) \hbox{ and } G_{0+} =(c_{4}, c_{3})
$$ 
so that 
$$
J_{0+}=J_{11+}\cup G_{0+}\cup J_{1+}\hbox{ and } J_{0-} = J_{11-}\cup G_{0-}\cup J_{1-}.
$$
The intervals ${\mathcal G}_{0\pm}=\{ G_{0\pm}\}$ form the {\em $0^{th}$-level  gaps} inside the {\em $0^{th}$-level  bridges} ${\mathcal I}_{0\pm}$.    The complementary intervals inside the $0^{th}$ level bridges form 
{\em the $1^{st}$-level  bridges}  ${\mathcal I}_{1\pm}=\{ J_{1\pm}, J_{11\pm}\}$. (See Figure~\ref{fig13}),

\begin{figure}[ht]
\centering
\includegraphics[width=5.5in]{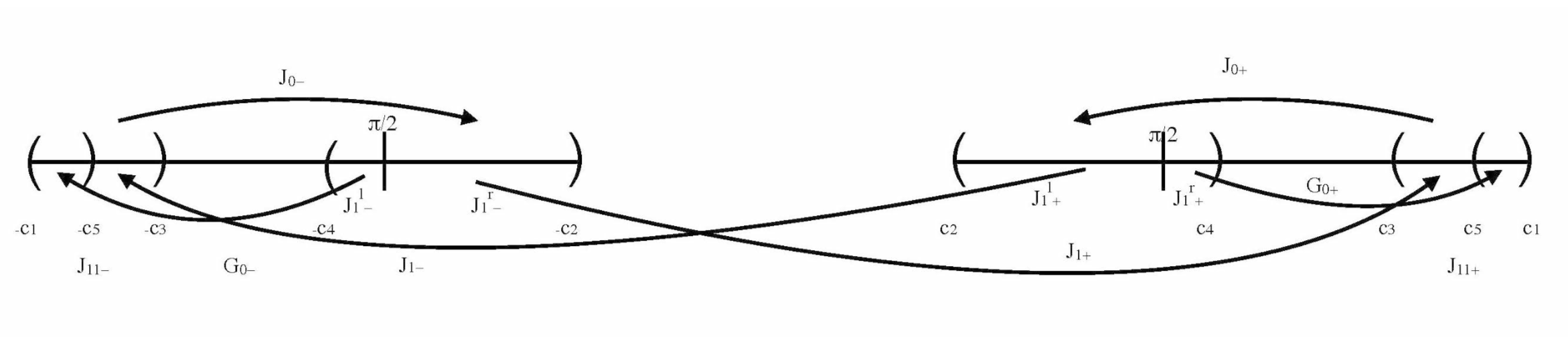}
\caption{ The $1^{st}$-level  bridges and the $0^{th}$-level gaps}~\label{fig13}
\end{figure}

We now go to the  second renormalization to get the $2^{nd}$-level  bridges and the $1^{st}$-level gaps; we have 

$$
\R^{2}=f^{2^{2}}: \left\{ \begin{array}{ll}
                                             I_{21-}=[-b_{2},-\pi/2]&\to [c_{2}, c_{4}]\\
                                             I_{22-}=[-\pi/2,-a_{2}]&\to [-c_{4}, -c_{2}]\\
                                             I_{22+}=[a_{2}, \pi/2] &\to [c_{2}, c_{2}]\\
                                             I_{21+}=[\pi/2, b_{2}] &\to [-c_{4}, -c_{2}]
                                             \end{array}\right.
$$
(See Figure~\ref{fig7})

Let 
$$ J_{2-}=[-c_{4}, -c_{8}], \, \, J_{2+}=[c_{8}, c_{4}], \, \, J_{21-}=[-c_{1}, -c_{5}]  \mbox{ and } J_{21+}=[c_{5}, c_{1}], $$ 
$$ J_{22-}=[-c_{6}, -c_{2}], \, \, J_{22+}=[c_{2}, c_{6}], \, \, J_{23-}=[-c_{7}, -c_{3}] \mbox{ and } J_{23+}=[c_{3}, c_{7}].$$
Then   $f$  is  continuous and  onto  
$$
 J_{21-}\stackrel{f}{\to} J_{22-}\stackrel{f}{\to}  J_{23+}\stackrel{f}{\to}  J_{2+} \hbox{ and } f: J_{21+}\stackrel{f}{\to} J_{22+}\stackrel{f}{\to} J_{23-}\stackrel{f}{\to} J_{2-}.
$$
As above, the pole divides    $J_{2-}$  into two subintervals, $J_{2-}^{l}=[-c_{4}, -\pi/2]$ and $J_{2-}^{r}=[-\pi/2, -c_{8}]$ and $f$ is  continuous and  onto on the subintervals
$$ f: J_{2-}^{l} \to J_{21-} \mbox{ and  }  f: J_{2-}^{r}\to J_{31+}=[c_{9},c_{1}]\subset J_{21+}.$$  Symmetrically, the pole divides 
  $J_{2+}$   into two subintervals $J_{2+}^{l}=[c_{8}, \pi/2]$ and $J_{2+}^{r}=[\pi/2, c_{4}]$  and $f$ is  continuous and  onto on the subintervals
$$ 
f: J_{2-}^{l}\to J_{31-}=[-c_{1}, -c_{9}]\subset J_{21-} \mbox{ and  } f: J_{2+}^{r}\to J_{21+}=[c_{5},c_{1}].
$$ 
(See Figure~\ref{fig7} where these are intervals in the vertical direction and Figure~\ref{fig14} where the intervals are  depicted horizontally).

Define the {\em $1^{st}$-level  gaps} as the set of intervals
$$
{\mathcal G}_{1-}=\{ G_{1-}, \; G_{11-}\}\; \hbox{ and }\; {\mathcal G}_{1+}=\{ G_{1+}, G_{11+}\}.
$$ 
inside the $1^{st}$-level  bridges where 
$$
G_{11-} =(-c_{5}, -c_{7}), \; G_{1-}= (-c_{8}, -c_{6})\;  \hbox{ and }\; G_{1+} =(c_{6}, c_{8}),\; G_{11+}=(c_{7}, c_{5}) 
$$ 
so that  
$$
J_{1-}=J_{2-}\cup G_{1-}\cup J_{22-},\;\; J_{11-}=J_{21-}\cup G_{11-}\cup J_{23-}
$$
and 
$$
J_{1+}=J_{2+}\cup G_{1+}\cup J_{22+},\;\; J_{11+}=J_{21+}\cup G_{11+}\cup J_{23+}.
$$
Now  define the {\em $2^{nd}$-level  bridges } as the set of intervals 
$$
{\mathcal I}_{2-}=\{ J_{2-}, J_{21-}, J_{22-}, J_{23-}\} \hbox{ and } {\mathcal I}_{2+}=\{ J_{2+}, J_{21+}, J_{22+}, J_{23+}\}.
$$
(See Figure~\ref{fig14}).

\begin{figure}[ht]
\centering
\includegraphics[width=5.5in]{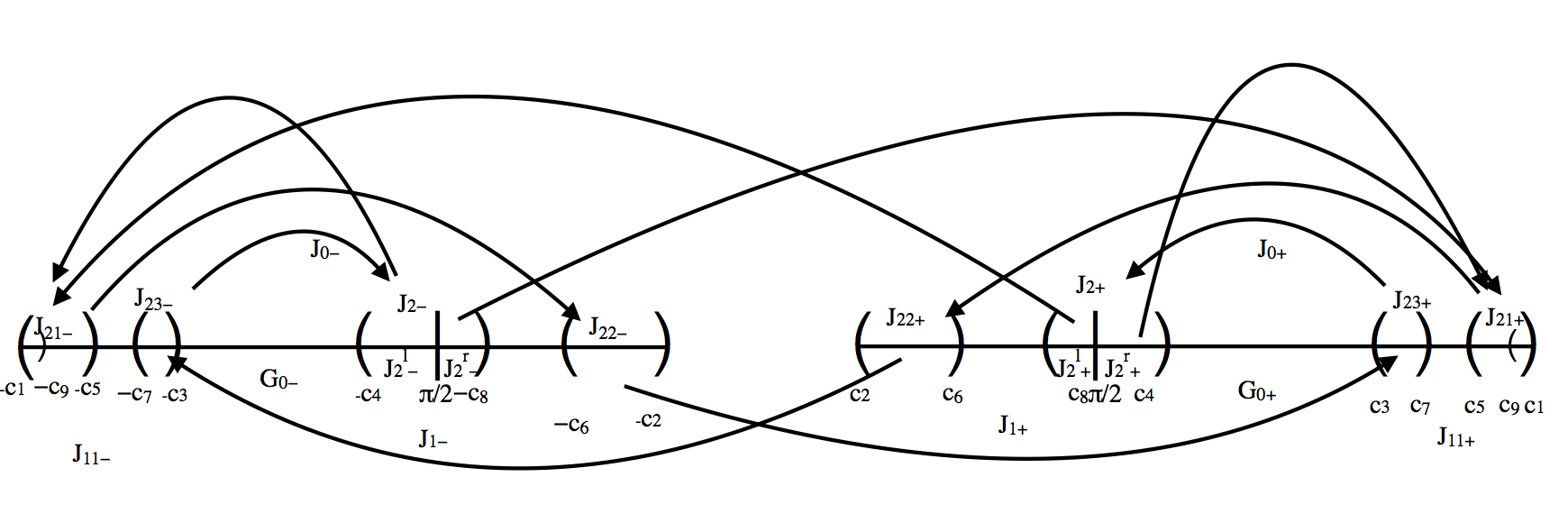}
\caption{ The $2^{nd}$-level bridges and the $1^{st}$-level  gaps}~\label{fig14}
\end{figure}
  
Using these steps as models for odd and even $n\geq 3$,  we  use the $n^{th}$ renormalization to define the $n^{th}$-level  bridges and the $(n-1)^{th}$-level  gaps.  Note that the parity of $n$ determines the orientation of the interval.  
 
We have 
$$
{\R}^{n}=f^{2^{n}}: \left\{ \begin{array}{ll}
                                             I_{n1-}=[-b_{n},-\pi/2]&\to [(-1)^n c_{2^{n-1}}, (-1)^n c_{2^{n}}]\\
                                             I_{n2-}=[-\pi/2,-a_{n}]&\to [(-1)^{n-1} c_{2^{n}}, (-1)^{n-1} c_{2^{n-1}}]\\
                                             I_{n2+}=[a_{n}, \pi/2] &\to [(-1)^n c_{2^{n-1}}, (-1)^n c_{2^{n}}]\\
                                             I_{n1+}=[\pi/2, b_{n}] &\to [(-1)^{n-1} c_{2^{n}}, (-1)^{n-1} c_{2^{n-1}}]
                                             \end{array}\right.
$$
(See Figures~\ref{fig5} and~\ref{fig7}.)

Let 
 \[ J_{n-} \mbox{ be the interval bounded by } -c_{2^{n+1}} \mbox{ and  } -c_{2^{n}} \] and let  
  \[ J_{n+} \mbox{ be the interval bounded by } c_{2^{n}} \mbox{ and  } c_{2^{n+1}}. \] 
 Which endpoint is the left one depends on the parity of $n$.   For example if $n$ is odd, $-c_{2^{n+1}}$ is to the left of  $-c_{2^{n}}$ while if $n$ is even it is to the right.

At the $n^{th}$-level we get $2^{n+1}$ subintervals.   Let $m=1, \ldots, 2^{n}$ and let 
 \[ J_{nm-} \mbox{ be the interval bounded by } -c_{m} \mbox{ and  } -c_{2^{n}+m} \] and   
  \[ J_{nm+} \mbox{ be the interval bounded by } c_{2^{n}+m} \mbox{ and  } c_{m}. \] 
 Again, which endpoint is the left one depends on the parity of $n$.  Note that  $J_{n2^{n}-}$, $J_{n0-}$  and $J_{n-}$ refer to the same interval;  $J_{n2^{n}+}$, $J_{n0+}$ and   $J_{n+}$ are the same interval.   

 Then 
$$
\cup_{m=1}^{2^{n}} J_{nm-} \subset J_{1-} \hbox{ and } \cup_{m=1}^{2^{n}} J_{nm+} \subset J_{1+}.
$$ 
If $m=2k+1$, $k=0, \ldots, 2^{n-1}-1$ then  
$$
f: J_{nm-} \to J_{n(m+1)-} \hbox{ and } f: J_{nm+} \to J_{n(m+1 )+}.  
$$

Recall that the poles $\pm \pi/2$ are contained inside the intervals  $(-c_4,-c_2)$ and $( c_4, c_2)$ respectively, and if  $m=2k$, $k=1, \ldots 2^{n-1}$, $J_{nm\pm}$ are subintervals that   lie either to the right or left of the pole.   This divides them into two groups: 

\[ \mbox{ if  } J_{n(2k)-}\subset (-\pi/2, -c_{2}) \mbox{ then }
f: J_{n(2k)-} \to J_{n(2k+1)+}   \mbox{ and } \]
\[ \mbox{ if } J_{n(2k)-}\subset (-c_{4}, -\pi/2) \mbox{  then } f: J_{n(2k)-} \to J_{n(2k+1)-}.  \]
The symmetric intervals $J_{n(2k)+}$ are also divided into two groups: 
\[ \mbox{ if  } J_{n(2k)+}\subset (c_{2}, \pi/2) \mbox{ then }
f: J_{n(2k)+} \to J_{n(2k+1)-} \mbox{ and } \]
\[ \mbox{ if } J_{n(2k)+}\subset (\pi/2, c_{4}) \mbox{ then }  f: J_{n(2k)+} \to J_{n(2k+1)+}. \]

The intervals $J_{n-}$  and $J_{n+}$ are divided by the poles they contain; because the parity of $n$ changes which endpoint is the left one, we label the intervals differently in each case.   If $n$ is odd we denote the respective subintervals as: 
 $$ J_{n-}^{l}=[-c_{2^{n+1}}, -\pi/2]  \mbox{  and  }  J_{n-}^{r}=[-\pi/2, -c_{2^{n}}] $$
 $$ J_{n+}^{l}=[c_{2^{n}}, \pi/2] \mbox{ and }  J_{n+}^{r}=[\pi/2, c_{2^{n+1}}].$$
  On each subinterval  $f$ is  continuous and maps as follows:
$$ f:J_{n-}^{l}  \to J_{(n+1)1-}=[-c_{1}, -c_{2^{n+1}+1}]\subset J_{n1-} \mbox{ and  }  f: J_{n-}^{r}\to J_{n1+}, $$  
$$ f:J_{n+}^{l}  \to J_{n1-} \mbox{ and } f: J_{n+}^{r} \to J_{(n+1)1+}=[c_{2^{n+1}+1}, c_{1}]\subset J_{n1+}.$$
If $n$ is even we denote the respective subintervals as 
 $$ J_{n-}^{l}=[-c_{2^{n}}, -\pi/2]  \mbox{  and  }  J_{n-}^{r}=[-\pi/2, -c_{2^{n+1}}]$$
 $$ J_{n+}^{l}=[c_{2^{n+1}}, \pi/2] \mbox{ and }  J_{n+}^{r}=[\pi/2, c_{2^{n}}].$$
 Then $f$ maps as follows: 
$$ f:J_{n-}^{l}  \to J_{(n)1-} \mbox{ and  }  f: J_{n-}^{r}\to J_{(n+1)1+}=[c_{2^{n+1}+1}, c_{1}]\subset J_{n1+}, $$  
$$ f:J_{n+}^{l}  \to J_{(n+1)1-}=[-c_{1}, -c_{2^{n+1}+1}]\subset J_{n1-} \mbox{ and } f: J_{n+}^{r} \to J_{n1+}.$$

Now we define the gap intervals.   Set 
\[
G_{(n-1)-} \mbox{  as the interval with endpoints  } -c_{2^{n+1}} \mbox{ and } -c_{2^{n-1}+2^{n}} 
\mbox{ and } \]
\[ G_{(n-1)+} \mbox{  as the interval with endpoints  } c_{2^{n-1}+2^{n}}   \mbox{ and } c_{2^{n+1}}
\]
where again which endpoint is the left one depends on the parity of $n$. 
For $k=1, \ldots, 2^{n-1}-1$,  inside the interval  $J_{(n-1)k-}$ bounded by $-c_{k}$ and $-c_{2^{n-1}+k}$, we define $G_{(n-1)k-}$ as the subinterval bounded by 
$-c_{2^{n}+k}$ and $-c_{2^{n}+2^{n-1}+k}$.  Symmetrically, for any $J_{(n-1)k+}$ bounded by $c_{k}$ and $c_{2^{n-1}+k}$, 
we define $G_{(n-1)k+}$ as the subinterval bounded by 
$c_{2^{n}+k}$ and $c_{2^{n}+2^{n-1}+k}$.  Then  
$$
J_{(n-1)k-} =J_{nk-}\cup G_{(n-1)k-}\cup J_{n(2^{n-1}+k)-} 
$$
and 
$$
J_{(n-1)k+} =J_{nk+}\cup G_{(n-1)k+}\cup J_{n(2^{n-1}+k)+}.
$$ 
The {\em $(n-1)^{th}$-level  gaps} are the collection of intervals 
$$
{\mathcal G}_{(n-1)-}=\{ G_{(n-1) k-}\}_{k=0}^{2^{n-1}-1} \hbox{ and } {\mathcal G}_{(n-1)+}=\{ G_{(n-1) k+}\}_{k=0}^{2^{n-1}-1}.
$$
 The {\em $n^{th}$-level  bridges} are the collection of complementary intervals 
$$
{\mathcal I}_{n-}=\{ J_{nk-}\}_{k=0}^{2^{n}-1} \hbox{ and } {\mathcal I}_{n+}=\{ J_{nk+}\}_{k=0}^{2^{n}-1}.
$$

Since  $t_{\infty} > \beta_n$  for all $n$, we can use   $\R^n$  to define bridges and gaps at all levels.  In the limit we have

$$
{\mathcal I}_{-}=\{ {\mathcal I}_{n-}\}_{n=0}^{\infty} \hbox{ and } {\mathcal I}_{+}=\{ {\mathcal I}_{n+}\}_{n=0}^{\infty}
$$
and  
$$
{\mathcal G}_{-}=\{ {\mathcal G}_{n-}\}_{n=0}^{\infty} \hbox{ and } {\mathcal G}_{+}=\{ {\mathcal G}_{n+}\}_{n=0}^{\infty}.
$$
 These  define two Cantor systems 
$$
{\mathcal CS}_{-}=({\mathcal I}_{-}, {\mathcal G}_{-}) \hbox{ and } {\mathcal CS}_{+}=({\mathcal I}_{+}, {\mathcal G}_{+}).
$$
 Let 
$$
C_{-} = \cap_{n=0}^{\infty} \cup_{k=0}^{2^{n}-1} J_{nk-} \hbox{ and } C_{+} = \cap_{n=0}^{\infty} \cup_{k=0}^{2^{n}-1} J_{nk+}.
$$
These are both  binary Cantor sets.

From our construction, we see that 
$$
C_{r}=C_{-}\cup C_{+}=\overline{\{ f^{n}(\pm t_{\infty}) \}_{n=0}^{\infty}}={\mathbb R}\cap C
$$ 
is forward $f$-invariant and the map $f: C_{r}\to C_{r}$ is minimal. 

 Now $C_{+}\subset (0, \pi)$ contains $\pi/2$ and $C_{-}=-C_{+}\subset (-\pi, 0)$ contains $-\pi/2$.   Since $T$ is  odd and one to one on $(0, \pi)$,  
the image $C_{i}=T(C_{-})=T(C_{+})=\Im \cap C$ is a totally disconnected, perfect, and uncountable, unbounded subset in the imaginary line $\Im$.   It is also $f$-forward invariant and minimal. 

This completes the proof of  Theorem~\ref{infrenorm}.
\end{proof}

\section{Appendix}\label{app 2}
This appendix contains  the proof of Lemma~\ref{parabolicbur}. 
\begin{lemma1}[Lemma~\ref{parabolicbur}]
Suppose $f(z)=z+a_nz^n+o(z^n)$ is an analytic function defined on some neighborhood  of $0 \in \mathbb C$. 
\begin{enumerate}
\item   Suppose  $\lambda$ lies  inside a small disk, inside and tangent to the unit circle at the point $1$.   
Then $g_{\lambda} (z)= \lambda f(z)$ has one attracting fixed point $0$ and $(n-1)$ repelling fixed points  counted with multiplicity,  in a small neighborhood of $0$.
\item  Suppose  $\lambda$ lies  inside a small disk, outside and tangent to the unit circle at the point $1$. Then $g_{\lambda}(z)=\lambda f(z)$ 
has one repelling fixed point $0$ and $(n-1)$ attracting fixed points counted with multiplicity, in a small neighborhood of $0$.
\end{enumerate}
\end{lemma1}

\begin{proof}
By Rouch\'e's theorem, $\lambda f(z)=z$ and $\lambda (z+a_{n}z^n)=z$ have the same number of solutions in a small neighborhood of $0$ and  by continuity, the 
corresponding multipliers are close to each other. 
So without loss of generality, suppose $f(z)=z+a_n z^n$. Then solutions  of $g_{\lambda}(z)=\lambda f(z) =z $ are $0$, with   multiplier  $\lambda$, and all solutions of $a_n\lambda z^{n-1}+(\lambda-1)=0$, each with  multiplier $\lambda+ n(1-\lambda).$ 
Consider two disks 
\[  
D_1=\{z\ :\ |z-n|<n-1\} \quad \mbox{and}\quad D_2=\{z\ : \ |z-\frac{n}{n-1}|<\frac{1}{n-1}\}.  \]
The map $\lambda \mapsto n-(n-1)\lambda$ takes the unit disk $\Delta=\{ z\:\  |z|=1\}$ to the disk $D_1$ and takes the disk $D_2$ to the unit disk $\Delta$.
It follows that if  $|\lambda | <1$,  $0$ is attracting and all the other fixed points are  repelling,  all have the same multiplier, and this multiplier is in the disk $D_1$.
Similarly, if $\lambda \in D_2$, $0$ is repelling and all the other fixed points are attracting, all have the same multiplier, and this multiplier is in the disk $D_2$.    By Rouch\'e's theorem $\lambda f(z)=z$  and $\lambda f(z)=g_{\lambda }(z)+o(z^n)$ have the same number of solutions as $g_{\lambda }(z)$ in a small neighborhood of $0$. 
\end{proof}

\vspace*{20pt}
\noindent Tao Chen, Department of Mathematics, Engineering and Computer Science,
Laguardia Community College, CUNY,
31-10 Thomson Ave. Long Island City, NY 11101.
Email: tchen@lagcc.cuny.edu

\vspace*{5pt}
\noindent Yunping Jiang, Department of Mathematics, Queens College of CUNY,
Flushing, NY 11367 and Department of Mathematics, CUNY Graduate
School, New York, NY 10016
Email: yunping.jiang@qc.cuny.edu

\vspace*{5pt}
\noindent Linda Keen, Department of Mathematics,  CUNY Graduate
School, New York, NY 10016, 
Email: LINDA.KEEN@lehman.cuny.edu; linda.keenbrezin@gmail.com

\end{document}